\RequirePackage{rotating}

\makeatletter
\def\@cons#1#2{\begingroup\let\@elt\relax\xdef#1{\ifx#1\relax\else#1\fi\@elt #2}\endgroup}
\makeatother
\documentclass{svjour3}

\usepackage{amssymb,amsmath}
\usepackage{longtable}
\usepackage{rotating}
\usepackage{multirow}
\usepackage{algorithm}
\usepackage{rotating} 
\usepackage{algorithmicx}
\usepackage{pdflscape}
\usepackage{booktabs}
\usepackage{hyperref}
\usepackage{commath}
\usepackage[capitalise]{cleveref}
\crefname{figure}{fig.}{figs.}%
\usepackage[toc,page]{appendix}
\usepackage{dsfont}

\usepackage{algpseudocode}
\usepackage{mathtools}
\usepackage{graphicx}
\usepackage{tikz}
\usepackage{pgfplots}
\usepgfplotslibrary{groupplots}
\pgfplotsset{compat=newest}
\usetikzlibrary{arrows}
\usepgfplotslibrary{groupplots}
\usetikzlibrary{decorations}
\usetikzlibrary{decorations.markings}
\usetikzlibrary{shapes}
\usetikzlibrary{intersections}
\usetikzlibrary{patterns}

\usepackage{relsize} 

\makeatletter \let\cl@chapter\relax \makeatother

\makeatletter
\newcommand{\labelx}[1]{
	\relax
	\ifmmode
	\label{#1}
	\else
	\ifnum\pdfstrcmp{\@currenvir}{document}=0
	\label{#1}
	\else
	\label[\@currenvir]{#1}
	\fi
	\fi
}
\makeatother

\pgfplotsset{compat=1.16}

\newcommand{\nosemic}{\renewcommand{\@endalgocfline}{\relax}}
\newcommand{\dosemic}{\renewcommand{\@endalgocfline}{\algocf@endline}}
\let\oldnl\nl
\newcommand{\nonl}{\renewcommand{\nl}{\let\nl\oldnl}}

\newcommand{\direct}{\texttt{DIRECT}}
\newcommand{\directgl}{\texttt{DIRECT-GL}}
\newcommand{\halrect}{\texttt{HALRECT}}
\newcommand{\birect}{\texttt{BIRECT}}

\newcommand{\adc}{\texttt{ADC}}

\newcommand{\indexsett}{\mathbb{I}}
\newcommand{\directlib}{\texttt{DIRECTGOLib v1.1}}
\newcommand{\directgo}{\texttt{DIRECTGO v1.1.0}}

\let\oldnl\nl

\newcommand{\algrule}[1][.2pt]{\par\vskip.5\baselineskip\hrule height #1\par\vskip.5\baselineskip}

\DeclareMathOperator*{\argmax}{arg\,max}

\definecolor{onyx}{rgb}{0.06, 0.06, 0.06}
\definecolor{sandstorm}{rgb}{0.93, 0.84, 0.25}
\definecolor{princetonorange}{rgb}{1.0, 0.56, 0.0}
\definecolor{sienna}{rgb}{0.53, 0.18, 0.09}
\definecolor{psychedelicpurple}{rgb}{0.87, 0.0, 1.0}

\pgfmathdeclarefunction{fpumod}{2}{%
	\pgfmathfloatdivide{#1}{#2}%
	\pgfmathfloatint{\pgfmathresult}%
	\pgfmathfloatmultiply{\pgfmathresult}{#2}%
	\pgfmathfloatsubtract{#1}{\pgfmathresult}%
	\pgfmathfloatifapproxequalrel{\pgfmathresult}{#2}{\def\pgfmathresult{5}}{}%
}

\algrenewcommand{\algorithmiccomment}[1]{\hfill// #1}
\algrenewcommand\algorithmicrequire{\textbf{Input:}}
\algrenewcommand\algorithmicensure{\textbf{Output:}}
\algnewcommand\algorithmicforeach{\textbf{for each}}
\algdef{S}[FOR]{ForEach}[1]{\algorithmicforeach\ #1\ \algorithmicdo}

\definecolor{redwood}{rgb}{0.67, 0.31, 0.32}

\tolerance=1
\emergencystretch=\maxdimen
\hyphenpenalty=10000
\hbadness=10000

\usetikzlibrary{external}
\tikzexternalize[prefix=figures/] 

\begin{document}
	\title{Lipschitz-inspired \halrect{} Algorithm for Derivative-free Global Optimization}
		
	\author{Linas Stripinis \and Remigijus Paulavi\v{c}ius}

	\institute{L. Stripinis, R. Paulavi\v{c}ius \at
			Vilnius University, Institute of Data Science and Digital Technologies, Akademijos 4, LT-08663 Vilnius, Lithuania \\
			\email{linas.stripinis@mif.vu.lt} \and R. Paulavi\v{c}ius \at \email{remigijus.paulavicius@mif.vu.lt}}
		
	\date{Received: date / Accepted: date}

	\maketitle

\begin{abstract}
This article considers a box-constrained global optimization problem for Lipschitz-continuous functions with an unknown Lipschitz constant.
Motivated by the famous \direct{} (DIviding RECTangles), a new \halrect{} (HALving RECTangles) algorithm is introduced.
A new deterministic approach combines halving (bisection) with a new multi-point sampling scheme in contrast to trisection and midpoint sampling used in most existing \direct-type algorithms.
A new partitioning and sampling scheme uses more comprehensive information on the objective function.
Four different strategies for selecting potentially optimal hyper-rectangles are introduced to exploit the objective function's information effectively.
The original algorithm \halrect{} and other introduced \halrect{} variations (twelve in total) are tested and compared with the other twelve recently introduced \direct-type algorithms on $96$ box-constrained benchmark functions from \directlib, and $96$ perturbed their versions.
Extensive experimental results are advantageous compared to state-of-the-art \direct-type global optimization.
New \halrect{} approaches offer high robustness across problems of different degrees of complexity, varying from simple -- uni-modal and low dimensional to complex -- multi-modal and higher dimensionality.
\keywords{\direct-type algorithm \and Global optimization \and Derivative-free optimization \and Lipschitz optimization \and Sampling-based algorithm}
\subclass{65K05 \and 74P99 \and 78M50, 90C99 \and 65K10}
\end{abstract}

\section{Introduction}\label{intro}

Generally, global optimization approaches can be divided into two main classes: deterministic and stochastic \cite{Horst1995:book,Sergeyev2018}.
Deterministic algorithms theoretically guarantee that at least one global optimum can be found \cite{Floudas1999book}, while stochastic algorithms find the solution in the probability sense~\cite{Liberti2005}.
Various optimization problems in science and engineering (e.g., machine learning models~\cite{Bishop2006}, Boeing design~\cite{Booker1998}, etc.) are black-box, i.e., the analytic information about the objective and constraints functions is unavailable.
Therefore, the development of derivative-free optimization has been forced by the need to optimize various and often increasingly complex problems in practice.


In this paper we consider a box-constrained potentially black-box global optimization problem
\begin{equation}\label{eq:opt-problem}
	\begin{aligned}
		& \min_{\mathbf{x}\in D} && f(\mathbf{x}),
	\end{aligned}
\end{equation}
where $f:\mathbb{R}^n \rightarrow \mathbb{R}$ is a real-valued Lipschitz-continuous function, i.e., there exists a positive constant $0 < L < \infty$, such that
\begin{equation}\label{eq:LipschitzCondition}
	\abs{f(\mathbf{x}) - f(\mathbf{y})}  \le  L\| \mathbf{x} - \mathbf{y} \|, \quad \forall \mathbf{x}, \mathbf{y} \in D,
\end{equation}
and the feasible region is an $n$-dimensional hyper-rectangle
$ D = [ \mathbf{a},  \mathbf{b}] = \{ \mathbf{x} \in \mathbb{R}^n: a_j \leq x_j \leq b_j, j = 1, \dots, n\} $.
In a black-box optimization case, the objective function $f$ is unknown and any information can only be obtained by evaluating the function at feasible points.

The \direct{} algorithm, developed by Jones et al.~\cite{Jones1993}, is a well-known and widely used sampling-based solution technique for derivative-free global optimization with limitations in the box.
An algorithm is an extension of classical Lipschitz optimization (e.g.,~\cite{Paulavicius2006,Paulavicius2007,Pinter1996book,Piyavskii1967,Sergeyev2011,Shubert1972}), where the need to know the Lipschitz constant is eliminated.
The \direct{} algorithm has also been successfully extended to solve problems with various constraints.
Authors in~\cite{Paulavicius2016:ol} proposed an approach to tackle linearly constrained problems.
Other authors~\cite{Costa2017,Finkel2004,Jones2001,Liu2017,pillo2016,pillo2010,Stripinis2018b} introduced \direct{}-type algorithm for generally constrained or even the for problems with hidden constraints~\cite{Gablonsky2001:phd,Na2017,Stripinis2021}.
In addition, \direct{}-type algorithms appear more often in the parallel environment~\cite{He2008,Stripinis2020,Stripinis2021c}.

A decade-old comprehensive numerical benchmarking~\cite{Rios2013} showed the encouraging performance of \direct-type algorithms among other derivative-free global optimization methods.
Our recent extensive study~\cite{Stripinis2021c} revealed that new and potentially better \direct-type algorithms are available today.
In~\cite{Stripinis2021b}, we also demonstrated that even better \direct-type algorithms could be obtained by combining various already known candidate selection and partitioning techniques, leading to even more efficient \direct-type algorithms.
Therefore, continuous design and development of efficient \direct-type algorithms is important and motivated by practice needs.

Unfortunately, \direct-type algorithms are not without their drawbacks. 
Among them, two well-known ones are~\cite{Gablonsky2001,Jones2001,Jones2021,Paulavicius2016:jogo,Paulavicius2019:eswa,Sergeyev2008:book}: i) delayed discovery of the globally optimal solution, especially for multi-modal and greater dimensionality problems, and ii) slow fine-tuning of the solution to high accuracy.
This limits \direct{} applicability mainly to lower-dimensionality global optimization problems~\cite{Jones2021}.
The first drawback is possibly determined by the original sampling scheme based on one center point per hyper-rectangle.
If the hyper-rectangle containing the global solution has a bad objective value at the midpoint, it is undesirable for the selection, and his further subdivision is delayed.

To address this in~\cite{Paulavicius2016:jogo,Sergeyev2006}, the authors introduced two different diagonal sampling schemes using two points per hyper-rectangle.
In this way, new algorithms, \birect~\cite{Paulavicius2016:jogo} and \adc~\cite{Sergeyev2006}, intuitively reduce the chance of this situation occurring.
It would require evaluating two bad points in the hyper-rectangle containing the global optimum.
In~\cite{Jones2001}, the author observed that to reduce the curse of dimensionality, the division of hyper-rectangles along only one longest side instead of all has a very positive impact.
Moreover, various two-phase-based approaches (see, e.g., \cite{Paulavicius2014:jogo,Sergeyev2006}) and hybridized \direct-type methods (see, e.g. \cite{Holmstrom2010,Jones2001,Paulavicius2019:eswa,Stripinis2018b,Liuzzi2010:coaa,Liuzzi2016}) were proposed to address both these shortcomings.

This paper introduces a new \halrect{} (HALving RECTangles) algorithm based on a new multi-point sampling scheme efficiently combined with halving (bisection).
Each hyper-rectangle is represented by considering up to $2n + 1$ sampling points and halved using bisection instead of just one sampled midpoint and trisection traditionally used in most \direct-type algorithms.
Therefore, more comprehensive information about the objective function over each hyper-rectangle is captured, especially for higher-dimensionality problems, as more sampled points are considered in selecting potentially optimal hyper-rectangles.

The rest of the paper is organized as follows.
\Cref{rewiev} reviews relevant existing~\direct-type modifications and summarizes the most common selection schemes and partitioning strategies used in state-of-the-art \direct{} algorithms.
A description of the new \halrect{} algorithm and all its new variations is given in \Cref{sec:halrect}.
The extensive numerical investigation of twelve \halrect{} variations and comparison with twelve recently introduced \direct-type algorithms~\cite{Stripinis2021b} using $96$ box-constrained global optimization test problems and their perturbed versions from \directlib~\cite{DIRECTGOLibv11} is provided in \Cref{sec:results}.
Finally, we conclude the paper in \Cref{sec:conclusions}.

\section{Related literature review}\label{rewiev}

This section reviews some of the most relevant \direct-type modifications.
We begin with a recap of the original algorithm.
Reviewing other \direct-type algorithms, we mainly focus on the proposed candidate selection, sampling, and partitioning schemes.

\subsection{Original \direct{} algorithm}\label{sec:direct}

The original \direct{} algorithm is designed for box-constrained global optimization problems.
Initially, the algorithm normalizes the feasible region $D = [\mathbf{a}, \mathbf{b}]$ to a unit hyper-rectangle $ \bar{D} = [0, 1]^n$ and only refers to the original space $D$ when evaluating the objective function $f$.
Therefore, throughout this paper, when it says that the value of the objective function is evaluated at $f(\mathbf{c})$, where the midpoint $\mathbf{c} \in \bar{D}$, it is understood that the corresponding midpoint of the original domain ($\mathbf{x} \in D$) is used, i.e.,
\begin{equation}
	\label{eq:space_original}
	f(\mathbf{c}) = f(\mathbf{x}), \text{where } x_j = (b_j - a_j) c_j + a_j, j=1,\dots,n.
\end{equation}
In each iteration, certain hyper-rectangles are identified and selected as ``potentially optimal hyper-rectangles'' (POH) for further investigation.
\direct{} samples and evaluates the objective function at the midpoint of each POH and subdivides them (into smaller hyper-rectangles) using the trisection strategy.
The selection, sampling, and subdivision procedures continue until some predefined limits have not been reached.
\Cref{fig:divide} illustrates this process, showing the initialization and the first two subsequent iterations of \direct{} for the two-variable \textit{Bukin6} test problem.

\begin{figure}[ht]
	\resizebox{\textwidth}{!}{
		\begin{tikzpicture}
			\begin{axis}[
				width=0.6\textwidth,height=0.6\textwidth,
				xlabel = {$c_1$},
				ylabel = {$c_2$},
				enlargelimits=0.05,
				title={Initialization},
				legend style={draw=none},
				legend columns=1,
				legend style={at={(0.8,-0.2)},font=\LARGE},
				ylabel style={yshift=-0.1cm},
				xlabel style={yshift=-0.01cm},
				ytick distance=1/6,
				xtick distance=1/6,
				every axis/.append style={font=\LARGE},
				yticklabels={$0$, $0$,$\frac{1}{6}$, $\frac{1}{3}$, $\frac{1}{2}$, $\frac{2}{3}$, $\frac{5}{6}$, $1$},
				xticklabels={$0$, $0$,$\frac{1}{6}$, $\frac{1}{3}$, $\frac{1}{2}$, $\frac{2}{3}$, $\frac{5}{6}$, $1$},
				]
				\addlegendimage{only marks,mark=*,color=black}
				\addlegendentry{Sampling point}
				\addplot[thick,patch,mesh,draw,black,patch type=rectangle,line width=0.3mm] coordinates {(0,0) (1,0) (1,1) (0,1)} ;
				\draw [black, thick, mark size=0.05pt, fill=blue!40,opacity=0.4,line width=0.3mm] (axis cs:0,0) rectangle (axis cs:1,1);
				\addplot[only marks,mark=*, mark size=2pt,blue] coordinates {(1/2, 1/2)} node[yshift=-12pt] {\small $\mathbf{c}^1$} node[yshift=12pt] {\small $50.05$};
			\end{axis}
		\end{tikzpicture}
		\hspace{-0.25cm}
		\begin{tikzpicture}
			\begin{axis}[
				width=0.6\textwidth,height=0.6\textwidth,
				xlabel = {$c_1$},
				enlargelimits=0.05,
				title={Iteration $1$},
				legend style={draw=none},
				legend columns=1,
				legend style={at={(0.8,-0.2)},font=\LARGE},
				ylabel style={yshift=-0.1cm},
				xlabel style={yshift=-0.01cm},
				ytick distance=1/6,
				xtick distance=1/6,
				every axis/.append style={font=\LARGE},
				yticklabels={$0$, $0$,$\frac{1}{6}$, $\frac{1}{3}$, $\frac{1}{2}$, $\frac{2}{3}$, $\frac{5}{6}$, $1$},
				xticklabels={$0$, $0$,$\frac{1}{6}$, $\frac{1}{3}$, $\frac{1}{2}$, $\frac{2}{3}$, $\frac{5}{6}$, $1$},
				]
				\addlegendimage{area legend,blue!40,,fill=blue!40,opacity=0.4,line width=0.3mm}
				\addlegendentry{Selected POH}
				\addplot[thick,patch,mesh,draw,black,patch type=rectangle,line width=0.3mm] coordinates {(0,0) (1,0) (1,1) (0,1)} ;
				\addplot[thick,patch,mesh,draw,black,patch type=rectangle,line width=0.3mm] coordinates {(1/3, 0) (1/3, 1) (2/3, 1) (2/3, 0)};
				\addplot[thick,patch,mesh,draw,black,patch type=rectangle,line width=0.3mm] coordinates {(1/3, 1/3) (1/3, 2/3) (2/3, 2/3) (2/3, 1/3)};
				\draw [black, thick, mark size=0.1pt, fill=blue!40,opacity=0.4,line width=0.3mm] (axis cs:2/3,0) rectangle (axis cs:1,1);
				\addplot[only marks,mark=o, mark size=2pt,black] coordinates {(1/2, 1/2)} node[yshift=-12pt] {\small $\mathbf{c}^1$} node[yshift=12pt] {\small $50.05$};
				\addplot[only marks,mark=o, mark size=2pt,black] coordinates {(1/6, 1/2)} node[yshift=-12pt] {\small $\mathbf{c}^2$} node[yshift=12pt] {\small $116.68$};
				\addplot[only marks,mark=*, mark size=2pt,blue] coordinates {(5/6, 1/2)} node[yshift=-12pt] {\small $\mathbf{c}^3$} node[yshift=12pt] {\small $16.78$};
				\addplot[only marks,mark=o, mark size=2pt,black] coordinates {(1/2, 1/6)} node[yshift=-12pt] {\small $\mathbf{c}^4$} node[yshift=12pt] {\small $150.05$};
				\addplot[only marks,mark=o, mark size=2pt,black] coordinates {(1/2, 5/6)} node[yshift=-12pt] {\small $\mathbf{c}^5$} node[yshift=12pt] {\small $132.33$};
			\end{axis}
		\end{tikzpicture}
		\hspace{-0.25cm}
		\begin{tikzpicture}
			\begin{axis}[
				width=0.6\textwidth,height=0.6\textwidth,
				xlabel = {$c_1$},
				enlargelimits=0.05,
				title={Iteration $2$},
				legend style={draw=none},
				legend columns=1,
				legend style={at={(0.85,-0.2)},font=\LARGE},
				ylabel style={yshift=-0.1cm},
				xlabel style={yshift=-0.01cm},
				ytick distance=1/6,
				xtick distance=1/6,
				every axis/.append style={font=\LARGE},
				yticklabels={$0$, $0$,$\frac{1}{6}$, $\frac{1}{3}$, $\frac{1}{2}$, $\frac{2}{3}$, $\frac{5}{6}$, $1$},
				xticklabels={$0$, $0$,$\frac{1}{6}$, $\frac{1}{3}$, $\frac{1}{2}$, $\frac{2}{3}$, $\frac{5}{6}$, $1$},
				]
				\addlegendimage{area legend,black,fill=white,opacity=0.5}
				\addlegendentry{Unselected region}
				\addplot[thick,patch,mesh,draw,black,patch type=rectangle,line width=0.3mm] coordinates {(0, 0) (1, 0) (1, 1) (0, 1)} ;
				\addplot[thick,patch,mesh,draw,black,patch type=rectangle,line width=0.3mm] coordinates {(1/3, 0) (1/3, 1) (2/3, 1) (2/3, 0)};
				\addplot[thick,patch,mesh,draw,black,patch type=rectangle,line width=0.3mm] coordinates {(1/3, 1/3) (1/3, 2/3) (1, 2/3) (1, 1/3)};
				\draw [black, thick, mark size=0.1pt, fill=blue!40,opacity=0.4,line width=0.3mm] (axis cs:0, 0) rectangle (axis cs:1/3, 1);
				\draw [black, thick, mark size=0.1pt, fill=blue!40,opacity=0.4,line width=0.3mm] (axis cs:2/3, 1/3) rectangle (axis cs:1, 2/3);
				\addplot[only marks,mark=o, mark size=2pt,black] coordinates {(1/2, 1/2)} node[yshift=-12pt] {\small $\mathbf{c}^1$} node[yshift=12pt] {\small $50.05$};
				\addplot[only marks,mark=*, mark size=2pt,blue] coordinates {(1/6, 1/2)} node[yshift=-12pt] {\small $\mathbf{c}^2$} node[yshift=12pt] {\small $116.68$};
				\addplot[only marks,mark=*, mark size=2pt,blue] coordinates {(5/6, 1/2)} node[yshift=-12pt] {\small $\mathbf{c}^3$} node[yshift=12pt] {\small $16.78$};
				\addplot[only marks,mark=o, mark size=2pt,black] coordinates {(1/2, 1/6)} node[yshift=-12pt] {\small $\mathbf{c}^4$} node[yshift=12pt] {\small $150.05$};
				\addplot[only marks,mark=o, mark size=2pt,black] coordinates {(1/2, 5/6)} node[yshift=-12pt] {\small $\mathbf{c}^5$} node[yshift=12pt] {\small $132.33$};
				\addplot[only marks,mark=o, mark size=2pt,black] coordinates {(5/6, 1/6)} node[yshift=-12pt] {\small $\mathbf{c}^6$} node[yshift=12pt] {\small $142.51$};
				\addplot[only marks,mark=o, mark size=2pt,black] coordinates {(5/6, 5/6)} node[yshift=-12pt] {\small $\mathbf{c}^7$} node[yshift=12pt] {\small $140.55$};
			\end{axis}
	\end{tikzpicture}}
	\caption{Two-dimensional illustration of selection, central sampling, and trisection used in the original \direct{} algorithm~\cite{Jones1993} solving the \textit{Bukin6} test problem.}
	\label{fig:divide}
\end{figure}
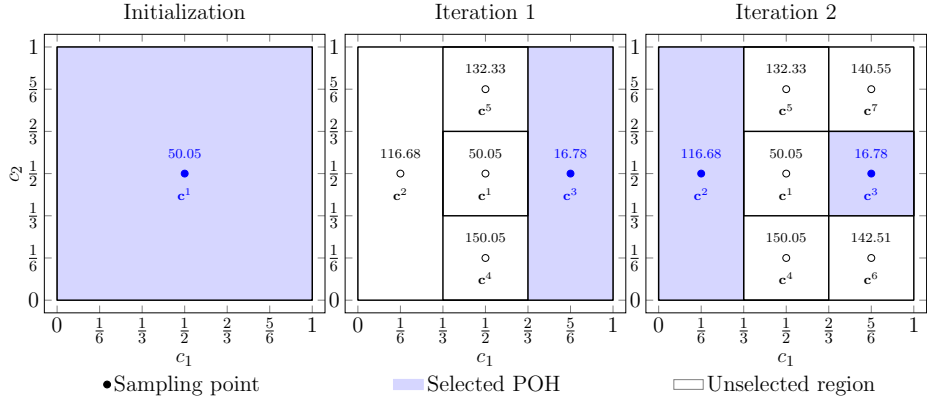

Regardless of the dimension, the first evaluation of the objective function is performed at the midpoint ($\mathbf{c}^1$).
Then, the \direct{} algorithm identifies and selects the POHs.
At initialization, the selection is trivial since only one hyper-rectangle $(\bar{D}^1)$ is available (see the left panel in \Cref{fig:divide}).
After selection, \direct{} samples new midpoints at positions
\begin{equation}
	\mathbf{c}^1 \pm \frac{1}{3}d^{\rm max} \mathbf{e}_j, j \in M,
\end{equation}
where $d^{\rm max}$ is equal to the maximum side length, $M$ is a set of dimensions with the maximum side length, and $\mathbf{e}_j$ is the $j$th unit vector.
The algorithm uses $n$-dimensional trisection, with the property that the objective function is evaluated at each hyper-rectangle only once --- at a midpoint.
The midpoint of the initial hyper-rectangle becomes the midpoint of the new smaller ``middle'' one.
Suppose the selected hyper-rectangle has more than one dimension with the maximum side length (as is the case for the initial hyper-rectangle).
In that case, \direct{} starts the trisection from the dimension with the lowest $w_j$ value
\begin{equation}
	w_j = \min \{ f(\mathbf{c}^1 + \frac{1}{3}d^{\rm max}\mathbf{e}_j),  f(\mathbf{c}^1 - \frac{1}{3}d^{\rm max}\mathbf{e}_j) \},  j \in M,
\end{equation}
and continues to the highest~\cite{Jones2021,Jones1993}.
In this way, the lower function values are placed in larger hyper-rectangles (see the middle panel in~\Cref{fig:divide}).
If all side lengths are equal, $2n + 1$ new smaller non-overlapping hyper-rectangles of $n$ distinct sizes are created.

Unlike initialization, in subsequent iterations, the selection of POHs is not trivial, as we have more than one candidate (see the middle and right panels in \Cref{fig:divide}).
Therefore, the selection procedure needs to be formalized.
Let the current partition in iteration $ k $ be defined as:
\begin{equation}\label{eq:partition_set}
	\mathcal{P}_k = \{ \bar{D}^i_k : i \in \indexsett_k \},
\end{equation}
where
\begin{equation}\label{eq:rectangle}
	\bar{D}^i_k = [\mathbf{a}_k^i, \mathbf{b}_k^i] = \{ \mathbf{x} \in \bar{D}: 0 \leq a_{k_j}^i \leq x_j \leq b_{k_j}^i \leq 1, j = 1,\dots, n, \forall i \in \indexsett_k \},
\end{equation}
and $ \indexsett_k $ is the index set that identifies the current partition $ \mathcal{P}_k $.
The next partition, $\mathcal{P}_{k+1}$, is obtained by subdividing selected POHs from the current partition $ \mathcal{P}_k $.
At the first iteration ($k=0$), there is always only one candidate, $\mathcal{P}_0 = \{ \bar{D}^1_0 \}$, which is automatically potentially optimal.
The formal requirement of potential optimality in subsequent iterations is stated in \Cref{def:potOptRect}.

\begin{definition}{(Original selection)}
	\label{def:potOptRect}
	Let $ \mathbf{c}^i $ denote the midpoint, $f(\mathbf{c}^i)$ objective function value $f(\mathbf{c}^i)$ obtained at the midpoint, and $ \delta^i_k $ be a measure (equivalently, sometimes called distance or size) of hyper-rectangle $ \bar{D}^i_k$.
	Let $ \varepsilon > 0 $ be a positive constant and $f^{\min}$ be the best currently found objective function value.
	A hyper-rectangle $ \bar{D}^h_k, h \in \indexsett_k $ is said to be potentially optimal if there exists some rate-of-change (Lipschitz) constant $ \tilde{L} > 0$ such that
	\begin{eqnarray}
		f(\mathbf{c}^h) - \tilde{L}\delta^h_k & \leq & f(\mathbf{c}^i) - \tilde{L}\delta^i_k, \quad \forall i \in \indexsett_k, \label{eqn:potOptRect1} \\
		f(\mathbf{c}^h) - \tilde{L}\delta^h_k & \leq & f^{\min} - \varepsilon \abs{f^{\min}}, \label{eqn:potOptRect2}
	\end{eqnarray}
	and the measure of the hyper-rectangle $ \bar{D}^i_k$ is
	\begin{equation}
		\label{eq:distance}
		\delta^i_k = \frac{1}{2} \left\| {\mathbf{b}}_k^i - {\mathbf{a}}_k^i \right\|.
	\end{equation}
\end{definition}

The hyper-rectangle $ \bar{D}^h_k $ is potentially optimal if the lower Lipschitz bound for the objective function computed on the left-hand side of \eqref{eqn:potOptRect1} is the lowest with some positive constant $\tilde{L}$ in the current partition $ \mathcal{P}_k $.
In~\eqref{eqn:potOptRect2}, the parameter $\varepsilon$ is used to protect against excessive refinement of the local minima~\cite{Jones1993,Paulavicius2014:jogo}.
Therefore, the lower Lipschitz bound on POH must be lower than the current minimum value $(f^{\rm min})$ by a considerable amount $(\ge \varepsilon \abs{f^{\min}})$.
In~\cite{Jones1993}, the authors obtained good performance using $\varepsilon$ values ranging from $10^{-3}$ to $10^{-7}$, and by default, the $\varepsilon = 10^{-4}$ value is suggested.

A geometrical interpretation of POH selection using \Cref{def:potOptRect} is illustrated in the right panel of \Cref{fig:poh}.
Here, each hyper-rectangle is represented as a dot whose horizontal coordinate is equal to the measure of the hyper-rectangle ($\delta^i_k$).
The vertical coordinate is equal to the value of the function at the midpoint $f(\mathbf{c}^i)$.
POHs satisfy both conditions of \Cref{def:potOptRect} and correspond to the lower-right convex hull of blue marked points in \Cref{fig:poh}.

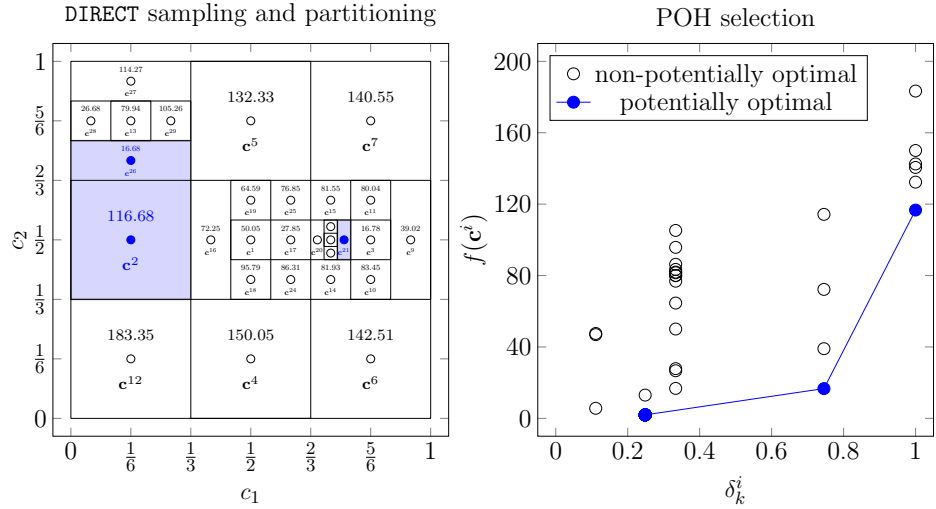
\begin{figure}
	\centering
	\resizebox{\textwidth}{!}{%
		\begin{tikzpicture}[baseline]
			\begin{axis}[
				width=0.7\textwidth,height=0.7\textwidth,
				legend style={draw=none},
				legend columns=1,
				legend style={at={(0.75,-0.15)},font=\LARGE},
				xlabel = {$c_1$},
				ylabel = {$c_2$},
				ymin=0,ymax=1,
				xmin=0,xmax=1,
				enlargelimits=0.05,
				title={\direct{} sampling and partitioning},
				ylabel style={yshift=-0.1cm},
				xlabel style={yshift=-0.1cm},
				ytick distance=1/6,
				xtick distance=1/6,
				every axis/.append style={font=\LARGE},
				yticklabels={$0$, $0$,$\frac{1}{6}$, $\frac{1}{3}$, $\frac{1}{2}$, $\frac{2}{3}$, $\frac{5}{6}$, $1$},
				xticklabels={$0$, $0$,$\frac{1}{6}$, $\frac{1}{3}$, $\frac{1}{2}$, $\frac{2}{3}$, $\frac{5}{6}$, $1$},
				]
				\draw [black, thick, mark size=0.1pt, fill=blue!40,opacity=0.4,line width=0.3mm] (axis cs:0, 1/3) rectangle (axis cs:1/3, 7/9);
				\draw [black, thick, mark size=0.1pt, fill=blue!40,opacity=0.4,line width=0.3mm] (axis cs:20/27, 4/9) rectangle (axis cs:21/27, 5/9);
				\addplot[only marks,mark=o, mark size=2pt,black] coordinates {(1/2, 1/2)} node[yshift=-6pt, scale=0.5] {\small $\mathbf{c}^1$} node[yshift=6pt, scale=0.5] {\small $50.05$};
				\addplot[only marks,mark=*, mark size=2pt,blue] coordinates {(1/6, 1/2)} node[yshift=-12pt] {\small $\mathbf{c}^2$} node[yshift=12pt] {\small $116.68$};
				\addplot[only marks,mark=o, mark size=2pt,black] coordinates {(5/6, 1/2)} node[yshift=-6pt, scale=0.5] {\small $\mathbf{c}^3$} node[yshift=6pt, scale=0.5] {\small $16.78$};
				\addplot[only marks,mark=o, mark size=2pt,black] coordinates {(1/2, 1/6)} node[yshift=-12pt] {\small $\mathbf{c}^4$} node[yshift=12pt] {\small $150.05$};
				\addplot[only marks,mark=o, mark size=2pt,black] coordinates {(1/2, 5/6)} node[yshift=-12pt] {\small $\mathbf{c}^5$} node[yshift=12pt] {\small $132.33$};
				\addplot[only marks,mark=o, mark size=2pt,black] coordinates {(5/6, 1/6)} node[yshift=-12pt] {\small $\mathbf{c}^6$} node[yshift=12pt] {\small $142.51$};
				\addplot[only marks,mark=o, mark size=2pt,black] coordinates {(5/6, 5/6)} node[yshift=-12pt] {\small $\mathbf{c}^7$} node[yshift=12pt] {\small $140.55$};
				\addplot[only marks,mark=o, mark size=2pt,black] coordinates {(13/18, 1/2)}; 
				\addplot[only marks,mark=o, mark size=2pt,black] coordinates {(17/18, 1/2)} node[yshift=-6pt, scale=0.5] {\small $\mathbf{c}^9$} node[yshift=6pt, scale=0.5] {\small $39.02$};
				\addplot[only marks,mark=o, mark size=2pt,black] coordinates {(5/6, 7/18)} node[yshift=-6pt, scale=0.5] {\small $\mathbf{c}^{10}$} node[yshift=6pt, scale=0.5] {\small $83.45$};
				\addplot[only marks,mark=o, mark size=2pt,black] coordinates {(5/6, 11/18)} node[yshift=-6pt, scale=0.5] {\small $\mathbf{c}^{11}$} node[yshift=6pt, scale=0.5] {\small $80.04$};
				\addplot[only marks,mark=o, mark size=2pt,black] coordinates {(1/6, 1/6)} node[yshift=-12pt] {\small $\mathbf{c}^{12}$} node[yshift=12pt] {\small $183.35$};
				\addplot[only marks,mark=o, mark size=2pt,black] coordinates {(1/6, 5/6)} node[yshift=-6pt, scale=0.5] {\small $\mathbf{c}^{13}$} node[yshift=6pt, scale=0.5] {\small $79.94$};
				\addplot[only marks,mark=o, mark size=2pt,black] coordinates {(13/18, 7/18)} node[yshift=-6pt, scale=0.5] {\small $\mathbf{c}^{14}$} node[yshift=6pt, scale=0.5] {\small $81.93$};
				\addplot[only marks,mark=o, mark size=2pt,black] coordinates {(13/18, 11/18)} node[yshift=-6pt, scale=0.5] {\small $\mathbf{c}^{15}$} node[yshift=6pt, scale=0.5] {\small $81.55$};
				\addplot[only marks,mark=o, mark size=2pt,black] coordinates {(7/18, 1/2)} node[yshift=-6pt, scale=0.5] {\small $\mathbf{c}^{16}$} node[yshift=6pt, scale=0.5] {\small $72.25$};
				\addplot[only marks,mark=o, mark size=2pt,black] coordinates {(11/18, 1/2)} node[yshift=-6pt, scale=0.5] {\small $\mathbf{c}^{17}$} node[yshift=6pt, scale=0.5] {\small $27.85$};
				\addplot[only marks,mark=o, mark size=2pt,black] coordinates {(1/2, 7/18)} node[yshift=-6pt, scale=0.5] {\small $\mathbf{c}^{18}$} node[yshift=6pt, scale=0.5] {\small $95.79$};
				\addplot[only marks,mark=o, mark size=2pt,black] coordinates {(1/2, 11/18)} node[yshift=-6pt, scale=0.5] {\small $\mathbf{c}^{19}$} node[yshift=6pt, scale=0.5] {\small $64.59$};
				\addplot[only marks,mark=o, mark size=2pt,black] coordinates {(37/54, 1/2)} node[yshift=-6pt, scale=0.5] {\small $\mathbf{c}^{20}$}; 
				\addplot[only marks,mark=*, mark size=2pt,blue] coordinates {(41/54, 1/2)} node[yshift=-6pt, scale=0.5] {\small $\mathbf{c}^{21}$}; 
				\addplot[only marks,mark=o, mark size=2pt,black] coordinates {(13/18, 25/54)}; 
				\addplot[only marks,mark=o, mark size=2pt,black] coordinates {(13/18, 29/54)}; 
				\addplot[only marks,mark=o, mark size=2pt,black] coordinates {(11/18, 7/18)} node[yshift=-6pt, scale=0.5] {\small $\mathbf{c}^{24}$} node[yshift=6pt, scale=0.5] {\small $86.31$};
				\addplot[only marks,mark=o, mark size=2pt,black] coordinates {(11/18, 11/18)} node[yshift=-6pt, scale=0.5] {\small $\mathbf{c}^{25}$} node[yshift=6pt, scale=0.5] {\small $76.85$};
				\addplot[only marks,mark=*, mark size=2pt,blue] coordinates {(1/6, 13/18)} node[yshift=-6pt, scale=0.5] {\small $\mathbf{c}^{26}$} node[yshift=6pt, scale=0.5] {\small $16.68$};
				\addplot[only marks,mark=o, mark size=2pt,black] coordinates {(1/6, 17/18)} node[yshift=-6pt, scale=0.5] {\small $\mathbf{c}^{27}$} node[yshift=6pt, scale=0.5] {\small $114.27$};
				\addplot[only marks,mark=o, mark size=2pt,black] coordinates {(1/18, 5/6)} node[yshift=-6pt, scale=0.5] {\small $\mathbf{c}^{28}$} node[yshift=6pt, scale=0.5] {\small $26.68$};
				\addplot[only marks,mark=o, mark size=2pt,black] coordinates {(5/18, 5/6)} node[yshift=-6pt, scale=0.5] {\small $\mathbf{c}^{29}$} node[yshift=6pt, scale=0.5] {\small $105.26$};
				\addplot[patch,mesh,draw,black,patch type=rectangle] coordinates {
					(0, 0) (1, 0) (1, 1) (0, 1)
					(0, 1/3) (1, 1/3) (1, 2/3) (0, 2/3)
					(1/3, 0) (1/3, 1) (2/3, 1) (2/3, 0)
					(7/9, 1/3) (7/9, 2/3) (8/9, 2/3) (8/9, 1/3)
					(4/9, 4/9) (4/9, 5/9) (8/9, 5/9) (8/9, 4/9)
					(4/9, 1/3) (4/9, 2/3) (5/9, 2/3) (5/9, 1/3)
					(1/9, 7/9) (1/9, 8/9) (2/9, 8/9) (2/9, 7/9)
					(0, 7/9) (3/9, 7/9) (3/9, 8/9) (0, 8/9)
					(19/27, 4/9) (19/27, 5/9) (20/27, 5/9) (20/27, 4/9)
					(19/27, 13/27) (19/27, 14/27) (20/27, 14/27) (20/27, 13/27)};
			\end{axis}
		\end{tikzpicture}
		\begin{tikzpicture}[baseline]
			\begin{axis}[
				width=0.7\textwidth,height=0.7\textwidth,
				every axis/.append style={font=\LARGE},
				xlabel = {$\delta^i_k$},
				ymin=0,ymax=200,
				xmin=0,xmax=1,
				xtick distance=0.2,
				ytick distance=40,
				title={POH selection},
				ylabel = {$f(\mathbf{c}^i)$},
				legend pos=north west,
				ylabel style={yshift=-0.1cm},
				xlabel style={yshift=-0.1cm},
				enlargelimits=0.05,
				]
				\addplot[only marks,mark=o,mark size=3pt,black] table[x=Y,y=X] {data/DPD.txt};
				\addplot[mark=*,mark size=3pt,blue] table[x=T,y=Z] {data/DPD.txt};
				\legend{non-potentially optimal,potentially optimal};
			\end{axis}
	\end{tikzpicture}}
	\caption{Visualization of selected potentially optimal rectangles in the fifth iteration of the \direct{} algorithm solving two-dimensional \textit{Bukin6} test problem.}
	\label{fig:poh}
\end{figure}

\subsection{Brief review of candidate selection schemes}
\label{sec:selection-schemes}
Typically, the \direct-type algorithms include three main steps: selection (of POHs), sampling, and partitioning (subdivision).
At each iteration, a specific \direct-type algorithm first selects the set of POHs before sampling and subdividing them.
In~\cite{Stripinis2021b}, we reviewed various improvements and new ideas introduced for the selection of POH proposed in the \direct{} literature.
The three most promising ones were extracted and used to construct new \direct-type algorithms, combining them with four different sampling and partitioning techniques.
For consistency, we give a brief description and a summary (see ~\Cref{tab:selection-schemes}) of the most often used selection schemes.
\Cref{sec:partitioning-schemes} briefly reviews sampling and partitioning techniques traditionally used in \direct-type algorithms.

\subsubsection{Improved original selection strategy}
It was observed that the original candidate selection strategy could be very inefficient on symmetric and other specific problems.
There may be many POHs with the same diameter $\delta^i_k$ and objective value, leading to a drastic increase in selected POHs per iteration.
To overcome this, the authors of \cite{Gablonsky2001} proposed an improvement by selecting only one of these many ``equivalent candidates''.
In \cite{Jones2021}, the authors showed that such modification could significantly increase the performance of the \direct{} algorithm.

\subsubsection{Aggressive Selection strategy}
In~\cite{Baker2000}, the authors relaxed the selection criteria of POHs and proposed an aggressive version.
The main idea is to select and divide at least one hyper-rectangle from each group of different diameters $(\delta_k^i)$ with the lowest value of the function.
\Cref{def:potOptRectAggr} formalizes the strategy for identifying an aggressive set of potentially optimal hyper-rectangles from the current partition.
\begin{definition}{(Aggressive selection)}
	\label{def:potOptRectAggr}
	Let $ \mathbf{c}^i $, $f(\mathbf{c}^i)$, and $ \delta^i_k $ be defined as in \Cref{def:potOptRect}.
	Let $\mathbb{I}_k^i \subseteq \mathbb{I}_k$ be the subset of indices corresponding to hyper-rectangles having the same measure $(\delta_k^i)$.
	The notation $\mathbb{I}^{\rm min}_k$ corresponds to the subset of hyper-rectangle indices that has the smallest measure $\delta_k^{\rm min}$, while $\mathbb{I}^{\rm max}_k $ has the largest measure $(\delta_k^{\rm max})$, and $\mathbb{I}_k =  \mathbb{I}^{\rm min}_k \cup \dots \cup \mathbb{I}^{\rm max}_k $.
	
	Then for each subset, $ \mathbb{I}_k^i \ (\min \le i \le \max) $, find hyper-rectangle(s) $ \bar{D}^h_k, h \in \indexsett_k^i $ with the lowest function value among all of the same measure $ (\delta^i_k) $, i.e.,
	\begin{eqnarray}
		f(\mathbf{c}^h) & \leq & f(\mathbf{c}^l), \quad \forall l \in \indexsett_k^i. \label{eqn:potOptRectAggr}
	\end{eqnarray}
\end{definition}

For the situation presented in~\Cref{fig:poh}, using \Cref{def:potOptRectAggr}, two additional hyper-rectangles from the groups where the original selection (see \Cref{def:potOptRect}) does not consider are selected.
From the Lipschitz optimization point of view, such an approach may seem less favorable since it explores non-potentially optimal hyper-rectangles.
There is no such positive constant $\tilde{L}$ value, using which the lower Lipschitz bound would have the lowest values for these additional candidates selected by an aggressive strategy.

\subsubsection{Improved aggressive selection strategy}
\label{ssec:ImprovedAggressiveSelection}
In \cite{He2008}, the authors introduced an improvement to aggressive selection. 
They showed that by limiting the refinement of the search space when the measure of hyper-rectangles $ (\delta^i_k )$ reached some prescribed limit $ \delta^{\rm limit} $, the memory usage might be reduced from $10 \%$ to $70 \%$.
Therefore, the improved aggressive version can run longer without memory allocation failure.
We note that in our experimental part (described in \Cref{sec:results}), the limit parameter $ (\delta^{\rm limit}) $ for algorithms using this selection scheme was set to the measure of a hyper-rectangle that has been subdivided $ 50 n $ times (same as in \cite{Stripinis2021b}).

\subsubsection{Two-step-based Pareto selection}
\label{ssec:ParetoSelection}
In our recent extension, \directgl{}~\cite{Stripinis2018a}, we introduced a new two-step-based approach to identify the extended set of POHs, formally stated in \Cref{def:potOptRectPareto}.

\begin{definition}{(Two-step Pareto selection)}
	\label{def:potOptRectPareto}
	Find all Pareto optimal hyper-rectangles that are non-dominated on size (the higher, the better) and center point function value (the lower, the better), and all non-dominated on size and distance from the current minimum point (the closer, the better).
	Then take the unique union of these two identified sets of candidates.
\end{definition}
Unlike the aggressive strategy (\Cref{def:potOptRectAggr}), using \Cref{def:potOptRectPareto}, hyper-rectangles from the groups where the minimum objective function value is higher than the minimum value from the larger groups are not selected.
Compared to the original selection (\Cref{def:potOptRect}), using \Cref{def:potOptRectPareto}, the set of POHs is enlarged by adding more medium-sized hyper-rectangles.
In this sense, Pareto selection may be more global.
Additionally, in the second step, the hyper-rectangles that are non-dominated with respect to the size and distance from the current minimum point are selected.
Therefore, the set of POHs is enlarged with various size hyper-rectangles nearest the current minimum point.

\begin{table}
	\begin{minipage}{\textwidth}
		\caption{Summary of selection schemes typically used in \direct-type algorithms (in ascending order of the year of publication)}
		\label{tab:selection-schemes}
		\begin{tabular}{p{0.25\textwidth}p{0.3\textwidth}p{0.35\textwidth}}
			\toprule
			\textbf{Notation $\&$ source} & Identification of POH & Final selection of POH \\
			\midrule
			\textbf{OS} (Jones et. al, 1993) & \textit{\textbf{O}riginal \textbf{S}election strategy} using \Cref{def:potOptRect}. & Selects all candidates which satisfies \Cref{def:potOptRect}. \\
			\midrule
			\textbf{AS} (Baker et. al, 2000) & \textit{\textbf{A}ggressive \textbf{S}election strategy} using \cref{def:potOptRectAggr}. & Selects all candidates which satisfies \Cref{def:potOptRectAggr}. \\
			\midrule
			\textbf{IO} (Gablonsky et. al, 2001) & \textit{\textbf{I}mproved \textbf{O}riginal selection strategy} using \Cref{def:potOptRect}. & Selects only one hyper-rectangle if there is a tie for the lowest function value in the same diameter group.\\
			\midrule
			\textbf{IA} (He et. al, 2008) & \textit{\textbf{I}mproved \textbf{A}ggressive selection strategy} using \cref{def:potOptRectAggr}, but limiting the selection of candidates to some prescribed limit $(\delta^{\rm limit})$. & Selects only one hyper-rectangle if there is a tie for the lowest function value in the same diameter group and $\delta^i_k \ge \delta^{\rm limit}$. \\
			\midrule
			\textbf{GL} (Stripinis et. al, 2018) & \textit{Two-step-based (\textbf{G}lobal-\textbf{L}ocal) Pareto selection} using \Cref{def:potOptRectPareto}. & Selects only one hyper-rectangle if there is a tie for the lowest function value or distance from the current minimum point.\\
			\bottomrule
		\end{tabular}
	\end{minipage}
\end{table}

\subsection{Brief review of sampling and partitioning schemes}
\label{sec:partitioning-schemes}
This subsection briefly reviews some of the primary sampling and partitioning techniques proposed in the \direct{} literature.
A summary of them is given in \Cref{tab:partitioning-strategies}, including illustrative examples.

\subsubsection{Hyper-rectangular partitioning based on \textbf{1-d}imensional \textbf{t}risection and \textbf{c}enter sampling}
In \cite{Jones2001}, Jones proposed a revised version of the original \direct{} algorithm.
One of the main algorithmic changes was made to the partitioning scheme.
The author suggested trisecting selected POHs only along the longest side (coordinate).
If there are several equal longest sides, the dimension that has been split the fewest times during the search procedure is selected.
If there is a tie on the latter criterion, the lowest indexed dimension is selected.
In \cite{Jones2021}, the authors experimentally justified that such modification can significantly improve the performance of the original \direct{} algorithm.

\begin{table}
	\begin{minipage}{\textwidth}
		\caption{Summary of sampling and partitioning schemes typically used in the \direct-type algorithms (in ascending order of the year of publication)}
		\label{tab:partitioning-strategies}
		\begin{tabular}{p{0.12\textwidth}p{0.24\textwidth}p{0.24\textwidth}p{0.25\textwidth}}
			\toprule
			\textbf{Notation \& Source} & Partitioning scheme & Sampling scheme & Illustrative example	\\
			\midrule
			\textbf{N-DTC} (Jones et. al, 1993) & Hyper-rectangular partitioning based on \textbf{N-D}imensional \textbf{T}risection. & Sampling points are located at the \textbf{C}enter points of each hyper-rectangle. &
			\raisebox{-0.8\totalheight}{
				\begin{tikzpicture}
					\begin{axis}[
						width=0.25\textwidth,height=0.25\textwidth,
						ytick=\empty,
						xtick=\empty,
						enlargelimits=0,
						]
						\addplot[thick,patch,mesh,draw,black,patch type=rectangle,line width=0.1mm] coordinates {(0, 0) (1, 0) (1, 1) (0, 1)} ;
						\addplot[thick,patch,mesh,draw,black,patch type=rectangle,line width=0.1mm] coordinates {(1/3, 0) (1/3, 1) (2/3, 1) (2/3, 0)};
						\addplot[thick,patch,mesh,draw,black,patch type=rectangle,line width=0.1mm] coordinates {(1/3, 1/3) (1/3, 2/3) (1, 2/3) (1, 1/3)};
						\draw [black, thick, mark size=0.1pt, fill=blue!40,opacity=0.4,line width=0.1mm] (axis cs:0, 0) rectangle (axis cs:1/3, 1);
						\draw [black, thick, mark size=0.1pt, fill=blue!40,opacity=0.4,line width=0.1mm] (axis cs:2/3, 1/3) rectangle (axis cs:1, 2/3);
						\addplot[only marks,mark=o, mark size=1.25pt,black] coordinates {(1/2, 1/2)};
						\addplot[only marks,mark=*, mark size=1.25pt,blue]  coordinates {(1/6, 1/2)};
						\addplot[only marks,mark=*, mark size=1.25pt,blue]  coordinates {(5/6, 1/2)};
						\addplot[only marks,mark=o, mark size=1.25pt,black] coordinates {(1/2, 1/6)};
						\addplot[only marks,mark=o, mark size=1.25pt,black] coordinates {(1/2, 5/6)};
						\addplot[only marks,mark=o, mark size=1.25pt,black] coordinates {(5/6, 1/6)};
						\addplot[only marks,mark=o, mark size=1.25pt,black] coordinates {(5/6, 5/6)};
					\end{axis}
				\end{tikzpicture}
				\begin{tikzpicture}
					\begin{axis}[
						width=0.25\textwidth,height=0.25\textwidth,
						ytick=\empty,
						xtick=\empty,
						enlargelimits=0,
						]
						\addplot[thick,patch,mesh,draw,black,patch type=rectangle,line width=0.1mm] coordinates {(0, 0) (1, 0) (1, 1) (0, 1)} ;
						\draw [black, thick, mark size=0.1pt,line width=0.1mm] (axis cs:0, 1/3) rectangle (axis cs:1, 2/3);
						\draw [black, thick, mark size=0.1pt,line width=0.1mm] (axis cs:7/9, 1/3) rectangle (axis cs:8/9, 2/3);
						\draw [black, thick, mark size=0.1pt,line width=0.1mm] (axis cs:7/9, 4/9) rectangle (axis cs:8/9, 5/9);
						\draw [black, thick, mark size=0.1pt,line width=0.1mm] (axis cs:1/3, 0) rectangle (axis cs:2/3, 1);
						\addplot[only marks,mark=o, mark size=1.25pt,black] coordinates {(1/2, 1/2)};
						\addplot[only marks,mark=o, mark size=1.25pt,black] coordinates {(1/6, 1/2)};
						\addplot[only marks,mark=o, mark size=1.25pt,black] coordinates {(5/6, 1/2)};
						\addplot[only marks,mark=o, mark size=1.25pt,black] coordinates {(1/2, 1/6)};
						\addplot[only marks,mark=o, mark size=1.25pt,black] coordinates {(1/2, 5/6)};
						\addplot[only marks,mark=o, mark size=1.25pt,black] coordinates {(5/6, 1/6)};
						\addplot[only marks,mark=o, mark size=1.25pt,black] coordinates {(5/6, 5/6)};
						\addplot[only marks,mark=o, mark size=1.25pt,black] coordinates {(1/6, 5/6)};
						\addplot[only marks,mark=o, mark size=1.25pt,black] coordinates {(1/6, 1/6)};
						\addplot[only marks,mark=o, mark size=1.25pt,black] coordinates {(5/6, 11/18)};
						\addplot[only marks,mark=o, mark size=1.25pt,black] coordinates {(5/6, 7/18)};
						\addplot[only marks,mark=o, mark size=1.25pt,black] coordinates {(17/18, 1/2)};
						\addplot[only marks,mark=o, mark size=1.25pt,black] coordinates {(13/18, 1/2)};
					\end{axis}
			\end{tikzpicture}} \\
			\midrule
			\textbf{1-DTC} (Jones et. al, 2001) & Hyper-rectangular partitioning based on \textbf{1-D}imensional \textbf{T}risection. & Sampling points are located at the \textbf{C}enter points of each hyper-rectangle. &
			\raisebox{-0.8\totalheight}{
				\begin{tikzpicture}
					\begin{axis}[
						width=0.25\textwidth,height=0.25\textwidth,
						ytick=\empty,
						xtick=\empty,
						enlargelimits=0,
						]
						\addplot[thick,patch,mesh,draw,black,patch type=rectangle,line width=0.1mm] coordinates {(0, 0) (1, 0) (1, 1) (0, 1)};
						\draw [black, thick, mark size=0.1pt,line width=0.1mm] (axis cs:1/3, 0) rectangle (axis cs:2/3, 1);
						\draw [black, thick, mark size=0.1pt, fill=blue!40,opacity=0.4,line width=0.1mm] (axis cs:0, 0) rectangle (axis cs:1/3, 1);
						\draw [black, thick, mark size=0.1pt, fill=blue!40,opacity=0.4,line width=0.1mm] (axis cs:2/3, 1/3) rectangle (axis cs:1, 2/3);
						\draw [black, thick, mark size=0.1pt,line width=0.1mm] (axis cs:2/3, 1/3) rectangle (axis cs:1, 2/3);
						\addplot[only marks,mark=o, mark size=1.25pt,black] coordinates {(1/2, 1/2)};
						\addplot[only marks,mark=*, mark size=1.25pt,blue]  coordinates {(1/6, 1/2)};
						\addplot[only marks,mark=*, mark size=1.25pt,blue]  coordinates {(5/6, 1/2)};
						\addplot[only marks,mark=o, mark size=1.25pt,black] coordinates {(5/6, 1/6)};
						\addplot[only marks,mark=o, mark size=1.25pt,black] coordinates {(5/6, 5/6)};
					\end{axis}
				\end{tikzpicture}
				\begin{tikzpicture}
					\begin{axis}[
						width=0.25\textwidth,height=0.25\textwidth,
						ytick=\empty,
						xtick=\empty,
						enlargelimits=0,
						]
						\addplot[thick,patch,mesh,draw,black,patch type=rectangle,line width=0.1mm] coordinates {(0, 0) (1, 0) (1, 1) (0, 1)} ;
						\draw [black, thick, mark size=0.1pt,line width=0.1mm] (axis cs:0, 1/3) rectangle (axis cs:1/3, 2/3);
						\draw [black, thick, mark size=0.1pt,line width=0.1mm] (axis cs:2/3, 1/3) rectangle (axis cs:1, 2/3);
						\draw [black, thick, mark size=0.1pt,line width=0.1mm] (axis cs:2/3, 4/9) rectangle (axis cs:1, 5/9);
						\draw [black, thick, mark size=0.1pt,line width=0.1mm] (axis cs:1/3, 0) rectangle (axis cs:2/3, 1);
						\addplot[only marks,mark=o, mark size=1.25pt,black] coordinates {(1/2, 1/2)};
						\addplot[only marks,mark=o, mark size=1.25pt,black] coordinates {(1/6, 1/2)};
						\addplot[only marks,mark=o, mark size=1.25pt,black] coordinates {(5/6, 1/2)};
						\addplot[only marks,mark=o, mark size=1.25pt,black] coordinates {(5/6, 1/6)};
						\addplot[only marks,mark=o, mark size=1.25pt,black] coordinates {(5/6, 5/6)};
						\addplot[only marks,mark=o, mark size=1.25pt,black] coordinates {(1/6, 5/6)};
						\addplot[only marks,mark=o, mark size=1.25pt,black] coordinates {(1/6, 1/6)};
						\addplot[only marks,mark=o, mark size=1.25pt,black] coordinates {(5/6, 11/18)};
						\addplot[only marks,mark=o, mark size=1.25pt,black] coordinates {(5/6, 7/18)};
					\end{axis}
			\end{tikzpicture}} \\
			\midrule
			\textbf{1-DTDV} (Sergeyev et. al, 2006) & Hyper-rectangular partitioning based on \textbf{1-D}imensional \textbf{T}risection. & Sampling points are located at two \textbf{D}iagonal \textbf{V}ertices of each hyper-rectangle. &
			\raisebox{-0.8\totalheight}{
				\begin{tikzpicture}
					\begin{axis}[
						width=0.25\textwidth,height=0.25\textwidth,
						ytick=\empty,
						xtick=\empty,
						enlargelimits=0,
						]
						\addplot[thick,patch,mesh,draw,black,patch type=rectangle,line width=0.1mm] coordinates {(0, 0) (1, 0) (1, 1) (0, 1)};
						\draw [black, thick, mark size=0.1pt,line width=0.1mm] (axis cs:1/3, 0) rectangle (axis cs:2/3, 1);
						\draw [black, thick, mark size=0.1pt, fill=blue!40,opacity=0.4,line width=0.1mm] (axis cs:0, 0) rectangle (axis cs:1/3, 1);
						\draw [black, thick, mark size=0.1pt, fill=blue!40,opacity=0.4,line width=0.1mm] (axis cs:2/3, 1/3) rectangle (axis cs:1, 2/3);
						\draw [black, thick, mark size=0.1pt,line width=0.1mm] (axis cs:2/3, 1/3) rectangle (axis cs:1, 2/3);
						\addplot[only marks,mark=*, mark size=1.25pt,blue]  coordinates {(0, 0)};
						\addplot[only marks,mark=*, mark size=1.25pt,blue]  coordinates {(1/3, 1)};
						\addplot[only marks,mark=o, mark size=1.25pt,black] coordinates {(2/3, 0)};
						\addplot[only marks,mark=o, mark size=1.25pt,black] coordinates {(1, 1)};
						\addplot[only marks,mark=*, mark size=1.25pt,blue]  coordinates {(2/3, 2/3)};
						\addplot[only marks,mark=*, mark size=1.25pt,blue]  coordinates {(1, 1/3)};
					\end{axis}
				\end{tikzpicture}
				\begin{tikzpicture}
					\begin{axis}[
						width=0.25\textwidth,height=0.25\textwidth,
						ytick=\empty,
						xtick=\empty,
						enlargelimits=0,
						]
						\addplot[thick,patch,mesh,draw,black,patch type=rectangle,line width=0.1mm] coordinates {(0, 0) (1, 0) (1, 1) (0, 1)} ;
						\draw [black, thick, mark size=0.1pt,line width=0.1mm] (axis cs:0, 1/3) rectangle (axis cs:1/3, 2/3);
						\draw [black, thick, mark size=0.1pt,line width=0.1mm] (axis cs:2/3, 1/3) rectangle (axis cs:1, 2/3);
						\draw [black, thick, mark size=0.1pt,line width=0.1mm] (axis cs:7/9, 1/3) rectangle (axis cs:8/9, 2/3);
						\draw [black, thick, mark size=0.1pt,line width=0.1mm] (axis cs:1/3, 0) rectangle (axis cs:2/3, 1);
						\addplot[only marks,mark=o, mark size=1.25pt,black] coordinates {(0, 0)};
						\addplot[only marks,mark=o, mark size=1.25pt,black] coordinates {(1/3, 1)};
						\addplot[only marks,mark=o, mark size=1.25pt,black] coordinates {(2/3, 0)};
						\addplot[only marks,mark=o, mark size=1.25pt,black] coordinates {(1, 1)};
						\addplot[only marks,mark=o, mark size=1.25pt,black] coordinates {(2/3, 2/3)};
						\addplot[only marks,mark=o, mark size=1.25pt,black] coordinates {(1, 1/3)};
						\addplot[only marks,mark=o, mark size=1.25pt,black] coordinates {(0, 2/3)};
						\addplot[only marks,mark=o, mark size=1.25pt,black] coordinates {(1/3, 1/3)};
						\addplot[only marks,mark=o, mark size=1.25pt,black] coordinates {(7/9, 1/3)};
						\addplot[only marks,mark=o, mark size=1.25pt,black] coordinates {(8/9, 2/3)};
					\end{axis}
			\end{tikzpicture}} \\
			\midrule
			\textbf{1-DTCS} (Paulavi\v{s}ius et. al, 2014) & Simplicial partitioning based on \textbf{1-D}imensional \textbf{T}risection. & Sampling points are located at the \textbf{C}enter points of each \textbf{S}implex. &
			\raisebox{-0.8\totalheight}{
				\begin{tikzpicture}
					\begin{axis}[
						width=0.25\textwidth,height=0.25\textwidth,
						ytick=\empty,
						xtick=\empty,
						enlargelimits=0,
						]
						\addplot[thick,patch,mesh,draw,black,patch type=rectangle,line width=0.1mm] coordinates {(0, 0) (1, 0) (1, 1) (0, 1)};
						\addplot[thick,patch,mesh,draw,black,line width=0.1mm] coordinates {(1/3, 1/3) (1, 0) (2/3, 2/3)};
						\addplot[thick,patch,mesh,draw,black,line width=0.1mm] coordinates {(1/3, 1/3) (0, 1) (2/3, 2/3)};
						\addplot[thick,patch,mesh,draw,black,line width=0.1mm] coordinates {(1/3, 0) (2/3, 0) (1/3, 1/3)};
						\addplot[thick,patch,mesh,draw,black,line width=0.1mm] coordinates {(2/3, 2/3) (1, 0) (1, 1)};
						\addplot[thick,patch,mesh,draw,black,line width=0.1mm] coordinates {(0, 0) (1/3, 0) (1/3, 1/3)};
						\addplot[thick,black,fill=blue!40,opacity=0.4,line width=0.1mm] coordinates {(2/3, 0) (1, 0) (1/3, 1/3)};
						\addplot[thick,patch,mesh,draw,black,line width=0.1mm] coordinates {(2/3, 0) (1, 0) (1/3, 1/3)};
						\addplot[thick,black,fill=blue!40,opacity=0.4,line width=0.1mm] coordinates {(0, 0) (0, 1) (1/3, 1/3)};
						\addplot[thick,patch,mesh,draw,black,line width=0.1mm] coordinates {(0, 0) (0, 1) (1/3, 1/3)};
						\addplot[thick,patch,mesh,draw,black,line width=0.1mm] coordinates {(2/3, 2/3) (0, 1) (1, 1)};
						\addplot[only marks,mark=o, mark size=1.25pt,black]  coordinates {(2/3, 1/3)};
						\addplot[only marks,mark=o, mark size=1.25pt,black]  coordinates {(1/3, 2/3)};
						\addplot[only marks,mark=o, mark size=1.25pt,black]  coordinates {(4/9, 1/9)};
						\addplot[only marks,mark=o, mark size=1.25pt,black]  coordinates {(8/9, 5/9)};
						\addplot[only marks,mark=o, mark size=1.25pt,black]  coordinates {(2/9, 1/9)};
						\addplot[only marks,mark=*, mark size=1.25pt,blue]   coordinates {(6/9, 1/9)};
						\addplot[only marks,mark=*, mark size=1.25pt,blue]   coordinates {(1/9, 4/9)};
						\addplot[only marks,mark=o, mark size=1.25pt,black]  coordinates {(5/9, 8/9)};
					\end{axis}
				\end{tikzpicture}
				\begin{tikzpicture}
					\begin{axis}[
						width=0.25\textwidth,height=0.25\textwidth,
						ytick=\empty,
						xtick=\empty,
						enlargelimits=0,
						]
						\addplot[thick,patch,mesh,draw,black,patch type=rectangle,line width=0.1mm] coordinates {(0, 0) (1, 0) (1, 1) (0, 1)};
						\addplot[thick,patch,mesh,draw,black,line width=0.1mm] coordinates {(1/3, 1/3) (1, 0) (2/3, 2/3)};
						\addplot[thick,patch,mesh,draw,black,line width=0.1mm] coordinates {(1/3, 1/3) (0, 1) (2/3, 2/3)};
						\addplot[thick,patch,mesh,draw,black,line width=0.1mm] coordinates {(1/3, 0) (2/3, 0) (1/3, 1/3)};
						\addplot[thick,patch,mesh,draw,black,line width=0.1mm] coordinates {(2/3, 2/3) (1, 0) (1, 1)};
						\addplot[thick,patch,mesh,draw,black,line width=0.1mm] coordinates {(0, 0) (1/3, 0) (1/3, 1/3)};
						\addplot[thick,patch,mesh,draw,black,line width=0.1mm] coordinates {(2/3, 0) (1, 0) (1/3, 1/3)};
						\addplot[thick,patch,mesh,draw,black,line width=0.1mm] coordinates {(0, 0) (0, 1) (1/3, 1/3)};
						\addplot[thick,patch,mesh,draw,black,line width=0.1mm] coordinates {(2/3, 2/3) (0, 1) (1, 1)};
						\addplot[thick,patch,mesh,draw,black,line width=0.1mm] coordinates {(0, 0) (0, 1/3) (1/3, 1/3)};
						\addplot[thick,patch,mesh,draw,black,line width=0.1mm] coordinates {(0, 2/3) (0, 1) (1/3, 1/3)};
						\addplot[thick,patch,mesh,draw,black,line width=0.1mm] coordinates {(2/3, 0) (1, 0) (7/9, 1/9)};
						\addplot[thick,patch,mesh,draw,black,line width=0.1mm] coordinates {(2/3, 0) (5/9, 2/9) (1/3, 1/3)};
						\addplot[only marks,mark=o, mark size=1.25pt,black]  coordinates {(2/3, 1/3)};
						\addplot[only marks,mark=o, mark size=1.25pt,black]  coordinates {(1/3, 2/3)};
						\addplot[only marks,mark=o, mark size=1.25pt,black]  coordinates {(4/9, 1/9)};
						\addplot[only marks,mark=o, mark size=1.25pt,black]  coordinates {(8/9, 5/9)};
						\addplot[only marks,mark=o, mark size=1.25pt,black]  coordinates {(2/9, 1/9)};
						\addplot[only marks,mark=o, mark size=1.25pt,black]  coordinates {(6/9, 1/9)};
						\addplot[only marks,mark=o, mark size=1.25pt,black]  coordinates {(1/9, 4/9)};
						\addplot[only marks,mark=o, mark size=1.25pt,black]  coordinates {(5/9, 8/9)};
						\addplot[only marks,mark=o, mark size=1.25pt,black]  coordinates {(1/9, 2/9)};
						\addplot[only marks,mark=o, mark size=1.25pt,black]  coordinates {(1/9, 6/9)};
						\addplot[only marks,mark=o, mark size=1.25pt,black]  coordinates {(22/27, 1/27)};
						\addplot[only marks,mark=o, mark size=1.25pt,black]  coordinates {(14/27, 5/27)};
					\end{axis}
			\end{tikzpicture}} \\
			\midrule
			\textbf{1-DBVS} (Paulavi\v{s}ius et. al, 2014) & Simplicial partitioning based on \textbf{1-D}imensional \textbf{B}isection. & Sampling points are located at \textbf{V}ertices of each \textbf{S}implex. &
			\raisebox{-0.8\totalheight}{
				\begin{tikzpicture}
					\begin{axis}[
						width=0.25\textwidth,height=0.25\textwidth,
						ytick=\empty,
						xtick=\empty,
						enlargelimits=0,
						]
						\addplot[thick,patch,mesh,draw,black,patch type=rectangle,line width=0.1mm] coordinates {(0, 0) (1, 0) (1, 1) (0, 1)};
						\addplot[thick,black,fill=blue!40,opacity=0.4,line width=0.1mm] coordinates {(0, 0) (1/2, 1/2) (0, 1)};
						\addplot[thick,patch,mesh,draw,black,line width=0.1mm] coordinates {(1/2, 1/2) (1, 1/2) (1, 1)};
						\addplot[thick,patch,mesh,draw,black,line width=0.1mm] coordinates {(1/2, 1/2) (0, 1) (1, 1)};
						\addplot[thick,black,fill=blue!40,opacity=0.4,line width=0.1mm] coordinates {(0, 0) (1/2, 0) (1/2, 1/2)};
						\addplot[thick,patch,mesh,draw,black,line width=0.1mm] coordinates {(1/2, 0) (1, 0) (1/2, 1/2)};
						\addplot[thick,patch,mesh,draw,black,line width=0.1mm] coordinates {(1/2, 1/2) (1, 0) (1, 1/2)};
						\addplot[thick,patch,mesh,draw,black,line width=0.1mm] coordinates {(0, 0) (0, 1) (1/2, 1/2)};
						\addplot[thick,patch,mesh,draw,black,line width=0.1mm] coordinates {(0, 0) (1/2, 0) (1/2, 1/2)};
						\addplot[only marks,mark=*, mark size=1.25pt,blue]  coordinates {(0, 0)};
						\addplot[only marks,mark=*, mark size=1.25pt,blue]  coordinates {(0, 1)};
						\addplot[only marks,mark=o, mark size=1.25pt,black] coordinates {(1, 0)};
						\addplot[only marks,mark=o, mark size=1.25pt,black] coordinates {(1, 1)};
						\addplot[only marks,mark=*, mark size=1.25pt,blue]  coordinates {(1/2, 1/2)};
						\addplot[only marks,mark=o, mark size=1.25pt,black] coordinates {(1, 1/2)};
						\addplot[only marks,mark=*, mark size=1.25pt,blue]  coordinates {(1/2, 0)};
					\end{axis}
				\end{tikzpicture}
				\begin{tikzpicture}
					\begin{axis}[
						width=0.25\textwidth,height=0.25\textwidth,
						ytick=\empty,
						xtick=\empty,
						enlargelimits=0,
						]
						\addplot[thick,patch,mesh,draw,black,patch type=rectangle,line width=0.1mm] coordinates {(0, 0) (1, 0) (1, 1) (0, 1)};
						\addplot[thick,patch,mesh,draw,black,line width=0.1mm] coordinates {(1/2, 1/2) (1, 1/2) (1, 1)};
						\addplot[thick,patch,mesh,draw,black,line width=0.1mm] coordinates {(1/2, 0) (1, 0) (1/2, 1/2)};
						\addplot[thick,patch,mesh,draw,black,line width=0.1mm] coordinates {(1/2, 1/2) (1, 0) (1, 1/2)};
						\addplot[thick,patch,mesh,draw,black,line width=0.1mm] coordinates {(0, 0) (0, 1) (1/2, 1/2)};
						\addplot[thick,patch,mesh,draw,black,line width=0.1mm] coordinates {(0, 0) (1/2, 0) (1/2, 1/2)};
						\addplot[thick,patch,mesh,draw,black,line width=0.1mm] coordinates {(0, 0) (0, 1/2) (1/2, 1/2)};
						\addplot[thick,patch,mesh,draw,black,line width=0.1mm] coordinates {(0, 0) (1/2, 0) (1/4, 1/4)};
						\addplot[only marks,mark=o, mark size=1.25pt,black]  coordinates {(0, 0)};
						\addplot[only marks,mark=o, mark size=1.25pt,black]  coordinates {(0, 1)};
						\addplot[only marks,mark=o, mark size=1.25pt,black]  coordinates {(1, 0)};
						\addplot[only marks,mark=o, mark size=1.25pt,black]  coordinates {(1, 1)};
						\addplot[only marks,mark=o, mark size=1.25pt,black]  coordinates {(1/2, 1/2)};
						\addplot[only marks,mark=o, mark size=1.25pt,black]  coordinates {(1, 1/2)};
						\addplot[only marks,mark=o, mark size=1.25pt,black]  coordinates {(1/2, 0)};
						\addplot[only marks,mark=o, mark size=1.25pt,black]  coordinates {(0, 1/2)};
						\addplot[only marks,mark=o, mark size=1.25pt,black]  coordinates {(1/4, 1/4)};
					\end{axis}
			\end{tikzpicture}} \\
			\midrule
			\textbf{1-DBDP} (Paulavi\v{s}ius et. al, 2016) & Hyper-rectangular partitioning based on \textbf{1-D}imensional \textbf{B}isection. & Sampling points are located at two \textbf{D}iagonals \textbf{P}oints equidistant between themselves and a diagonal's vertices.&
			\raisebox{-0.8\totalheight}{
				\begin{tikzpicture}
					\begin{axis}[
						width=0.25\textwidth,height=0.25\textwidth,
						ytick=\empty,
						xtick=\empty,
						enlargelimits=0,
						]
						\addplot[thick,patch,mesh,draw,black,patch type=rectangle,line width=0.1mm] coordinates {(0, 0) (1, 0) (1, 1) (0, 1)};
						\draw [black, thick,fill=blue!40,opacity=0.4, mark size=0.1pt,line width=0.1mm] (axis cs:0, 0) rectangle (axis cs:1/2, 1/2);
						\draw [black, thick,fill=blue!40,opacity=0.4, mark size=0.1pt,line width=0.1mm] (axis cs:0, 1/2) rectangle (axis cs:1/4, 1);
						\draw [black, thick, mark size=0.1pt,line width=0.1mm] (axis cs:0, 0) rectangle (axis cs:1/2, 1);
						\draw [black, thick, mark size=0.1pt,line width=0.1mm] (axis cs:0, 0) rectangle (axis cs:1, 1/2);
						\draw [black, thick, mark size=0.1pt,line width=0.1mm] (axis cs:0, 1/2) rectangle (axis cs:1/4, 1);
						\addplot[only marks,mark=*, mark size=1.25pt,blue]  coordinates {(1/3, 1/3)};
						\addplot[only marks,mark=*, mark size=1.25pt,blue]  coordinates {(1/6, 1/6)};
						\addplot[only marks,mark=o, mark size=1.25pt,black] coordinates {(5/6, 1/3)};
						\addplot[only marks,mark=o, mark size=1.25pt,black] coordinates {(2/3, 1/6)};
						\addplot[only marks,mark=o, mark size=1.25pt,black] coordinates {(1/3, 5/6)};
						\addplot[only marks,mark=o, mark size=1.25pt,black] coordinates {(5/12, 2/3)};
						\addplot[only marks,mark=*, mark size=1.25pt,blue]  coordinates {(1/12, 5/6)};
						\addplot[only marks,mark=*, mark size=1.25pt,blue]  coordinates {(1/6, 2/3)};
						\addplot[only marks,mark=o, mark size=1.25pt,black] coordinates {(5/6, 5/6)};
						\addplot[only marks,mark=o, mark size=1.25pt,black] coordinates {(2/3, 2/3)};
					\end{axis}
				\end{tikzpicture}
				\begin{tikzpicture}
					\begin{axis}[
						width=0.25\textwidth,height=0.25\textwidth,
						ytick=\empty,
						xtick=\empty,
						enlargelimits=0,
						]
						\addplot[thick,patch,mesh,draw,black,patch type=rectangle,line width=0.1mm] coordinates {(0, 0) (1, 0) (1, 1) (0, 1)};
						\draw [black, thick, mark size=0.1pt,line width=0.1mm] (axis cs:0, 0) rectangle (axis cs:1/2, 1);
						\draw [black, thick, mark size=0.1pt,line width=0.1mm] (axis cs:0, 0) rectangle (axis cs:1, 1/2);
						\draw [black, thick, mark size=0.1pt,line width=0.1mm] (axis cs:0, 0) rectangle (axis cs:1/4, 1);
						\draw [black, thick, mark size=0.1pt,line width=0.1mm] (axis cs:0, 1/2) rectangle (axis cs:1/4, 3/4);
						\addplot[only marks,mark=o, mark size=1.25pt,black] coordinates {(1/3, 1/3)};
						\addplot[only marks,mark=o, mark size=1.25pt,black] coordinates {(1/6, 1/6)};
						\addplot[only marks,mark=o, mark size=1.25pt,black] coordinates {(5/6, 1/3)};
						\addplot[only marks,mark=o, mark size=1.25pt,black] coordinates {(2/3, 1/6)};
						\addplot[only marks,mark=o, mark size=1.25pt,black] coordinates {(1/3, 5/6)};
						\addplot[only marks,mark=o, mark size=1.25pt,black] coordinates {(5/12, 2/3)};
						\addplot[only marks,mark=o, mark size=1.25pt,black] coordinates {(1/12, 5/6)};
						\addplot[only marks,mark=o, mark size=1.25pt,black] coordinates {(1/6, 2/3)};
						\addplot[only marks,mark=o, mark size=1.25pt,black] coordinates {(5/6, 5/6)};
						\addplot[only marks,mark=o, mark size=1.25pt,black] coordinates {(2/3, 2/3)};
						\addplot[only marks,mark=o, mark size=1.25pt,black] coordinates {(1/12, 1/3)};
						\addplot[only marks,mark=o, mark size=1.25pt,black] coordinates {(5/12, 1/6)};
						\addplot[only marks,mark=o, mark size=1.25pt,black] coordinates {(1/12, 7/12)};
						\addplot[only marks,mark=o, mark size=1.25pt,black] coordinates {(1/6, 11/12)};
					\end{axis}
			\end{tikzpicture}} \\
			\bottomrule
		\end{tabular}
	\end{minipage}
\end{table}

\subsubsection{Hyper-rectangular partitioning based on \textbf{1-d}imensional \textbf{t}risection and sampling on \textbf{d}iagonal \textbf{v}ertices}
Adaptive diagonal curves (\adc) based algorithm was proposed in \cite{Sergeyev2006}.
Independently of the dimension, \adc{} evaluates the objective function $f(\mathbf{x})$ on two vertices of the main diagonals.
By sampling two points per hyper-rectangle, such a partitioning scheme reduces the chance that the algorithm samples two bad points in the same hyper-rectangle containing an optimal solution.
Thus, better performance could be expected, especially on more complex problems.
Moreover, such a scheme has a significant advantage over center sampling methods when most solution coordinates are located on the boundaries~\cite{Stripinis2021b}.
As in the revised version of \direct{} \cite{Jones2001}, each selected POH is trisected along just one of the longest sides.

\subsubsection{Simplicial partitioning based on \textbf{1-d}imensional \textbf{t}risection/\textbf{b}isection and sampling at center/vertices}
In \texttt{DISIMPL}~\cite{Paulavicius2013:jogo}, simplicial partitions are considered instead of hyper-rectangles.
At the initialization step, the unit hyper-rectangle $\bar{D}$ is partitioned into $n!$ simplices by the standard face-to-face simplicial division based on combinatorial vertex triangulation~\cite{Paulavicius2013:jogo}.
After this, all simplices share the diagonal of the feasible region and have equal hyper-volume.
In~\cite{Paulavicius2013:jogo}, two different sampling and partitioning strategies were proposed: i) evaluating the objective function at the geometric center point of the simplex and trisecting them (1-DTCS); ii) evaluating the objective function on all unique vertices of the simplex and bisecting them (1-DBVS).
While simplicial partitions are very promising for symmetric~\cite{Paulavicius2013:jogo} and problems with linear constraints~\cite{Paulavicius2016:ol} for box-constrained, they are less appealing as the number of initial simplices increases speedily with the number of dimensions.

\subsubsection{Hyper-rectangular partitioning based on \textbf{1-d}imensional \textbf{b}isection and sampling at two diagonal points}
One of the most recent proposals, \birect{} (\texttt{BI}secting \texttt{RECT}angles)~\cite{Paulavicius2016:jogo}, is also motivated by the diagonal partitioning strategy~\cite{Sergeyev2006,Sergeyev2008:book,Sergeyev2017:book}.
However, the objective function is evaluated at two points on the diagonal that are equidistant between themselves and the vertices of the diagonal.
Such a sampling strategy enables the reuse of the sampling points in descendant hyper-rectangles.
Moreover, the bisection is used instead of the typical trisection for diagonal-based algorithms and most \direct-type algorithms.


\section{Description of the \halrect{} algorithm}\label{sec:halrect}

Unlike most \direct-type algorithms based on central sampling combined with trisection, \halrect{} (HALving RECTangles) is based on a unique multi-point sampling technique combined with a halving (bisection).
We first give a high-level illustration of the sampling and partitioning techniques used in the \halrect{} algorithm.
We illustrate them on a binary tree (see \Cref{fig:flowchart_halrect}).
Note that the value of the function was evaluated at more than one sampling point at each POH (except the initial hyper-rectangle).
The experimental part shows that much more comprehensive information about the objective function over hyper-rectangles can be exploited efficiently and positively impact the algorithm's performance.
In contrast to the authors of the original \direct, who proposed trisection, bisection combined with central sampling can also be a very efficient combination.
In the following subsections, we detail the main steps of the \halrect{} algorithm.

\begin{figure}
	\centering
	\resizebox{0.85\textwidth}{!}{
		\begin{tikzpicture}
			\node at (0,1.5) {Initialization};
			\node[draw,	minimum width=2cm, minimum height=2cm, line width=1pt] at (0,0) {};
			\filldraw [black] (0,0) circle (2pt);
			\draw[dashed, line width=1pt] (0,1) -- (0,-1);
			\node[draw,	minimum width=1cm, minimum height=2cm, line width=1pt] at (-3,-2) {};
			\filldraw [black] (-3,-2) circle (2pt);
			\filldraw [black] (-2.5,-2) circle (2pt);
			\draw[dashed, line width=1pt] (-3.5,-2) -- (-2.5,-2);
			\node[draw,	minimum width=1cm, minimum height=2cm, line width=1pt] at (3,-2) {};
			\filldraw [black] (3,-2) circle (2pt);
			\filldraw [black] (2.5,-2) circle (2pt);
			\draw[dashed, line width=1pt] (3.5,-2) -- (2.5,-2);
			\draw[dashed, line width=1pt] (-5,-4.5) -- (-5,-3.5);
			\node[draw,	minimum width=1cm, minimum height=1cm, line width=1pt] at (-5,-4) {};
			\node[draw,	minimum width=0.5cm, minimum height=1cm, line width=1pt] at (-5.25,-6) {};
			\node[draw,	minimum width=0.5cm, minimum height=1cm, line width=1pt] at (-2.75,-6) {};
			\node[draw,	minimum width=1cm, minimum height=1cm, line width=1pt] at (-1,-4) {};
			\node[draw,	minimum width=1cm, minimum height=1cm, line width=1pt] at (5,-4) {};
			\node[draw,	minimum width=1cm, minimum height=1cm, line width=1pt] at (1,-4) {};
			\filldraw [black] (-5.25,-6) circle (2pt);
			\filldraw [black] (-5,-6) circle (2pt);
			\filldraw [black] (-5,-5.5) circle (2pt);
			\filldraw [black] (-2.75,-6) circle (2pt);
			\filldraw [black] (-3,-6) circle (2pt);
			\filldraw [black] (-3,-5.5) circle (2pt);
			\filldraw [black] (-2.5,-5.5) circle (2pt);
			\filldraw [black] (-5,-4) circle (2pt);
			\filldraw [black] (-5,-3.5) circle (2pt);
			\filldraw [black] (-4.5,-3.5) circle (2pt);
			\filldraw [black] (-1,-4) circle (2pt);
			\filldraw [black] (-0.5,-4.5) circle (2pt);
			\filldraw [black] (-1,-4.5) circle (2pt);
			\filldraw [black] (5,-4) circle (2pt);
			\filldraw [black] (5,-4.5) circle (2pt);
			\filldraw [black] (4.5,-4.5) circle (2pt);
			\filldraw [black] (1,-4) circle (2pt);
			\filldraw [black] (0.5,-3.5) circle (2pt);
			\filldraw [black] (1,-3.5) circle (2pt);
			\draw[line width=1pt, ->] (-1,0) -- node[anchor=south,yshift=-0.15cm] {\rotatebox{26}{Iteration 1}} (-2.9,-0.9);
			\draw[line width=1pt, ->] (1,0)  -- node[anchor=south,yshift=-0.15cm] {\rotatebox{-26}{Iteration 1}} (2.9,-0.9);
			\draw[line width=1pt, ->] (-2.5,-1.5) -- node[anchor=south,xshift=0.15cm,yshift=-0.45cm] {\rotatebox{-53}{Iteration 2}} (-1.1,-3.4);
			\draw[line width=1pt, ->] (-3.5,-2.5) -- node[anchor=south,xshift=-0.1cm,yshift=-0.3cm] {\rotatebox{33}{Iteration 2}} (-4.9,-3.4);
			\draw[line width=1pt, ->] (3.5,-1.5) -- node[anchor=south,xshift=0.15cm,yshift=-0.4cm] {\rotatebox{-53}{Iteration 3}} (4.9,-3.4);
			\draw[line width=1pt, ->] (2.5,-2.5) -- node[anchor=south,xshift=-0.15cm,yshift=-0.35cm] {\rotatebox{33}{Iteration 3}} (1.1,-3.4);
			\draw[line width=1pt, ->] (-4.5,-4) -- node[anchor=south,xshift=0.2cm,yshift=-0.4cm] {\rotatebox{-38}{Iteration 3}} (-2.65,-5.4);
			\draw[line width=1pt, ->] (-5.25,-4.5) -- node[anchor=south,xshift=-0.6cm,yshift=-1cm] {\rotatebox{90}{Iteration 3}} (-5.25,-5.4);
	\end{tikzpicture}}
	\caption{Sampling and partitioning techniques used in the \halrect{} algorithm illustrated as a binary tree}
	\label{fig:flowchart_halrect}
\end{figure}
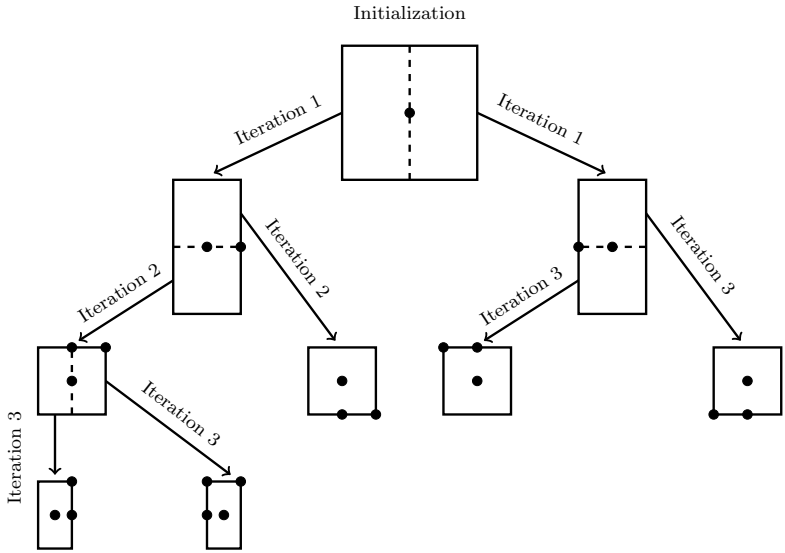

\subsection{Initialization phase}\label{sec:halrectinit}
Like others, the \halrect{} algorithm begins by scaling the feasible region $D$ to an $n$-dimensional unit hyper-rectangle $\bar{D}_0^1$.
It only refers to the initial space $D$ when evaluating the objective function $f(\mathbf{x})$.
The selection of POH in the initialization phase is trivial, as only one candidate is available.
However, in the subsequent iterations, the selection of  POHs is not trivial, and \Cref{ssec:selection-strategies} is devoted to formalizing this.

\subsection{Partitioning and sampling scheme} \label{partitioning}

Like other \direct-type algorithms, \halrect{} samples and evaluates the objective function at midpoints (in the initial phase at $\mathbf{c}^1 \in \bar{D}_0^1 $).
However, unlike most other algorithms, \halrect{} uses bisection instead of trisection.
As a result, midpoints, after bisection, shift in different facets of the hyper-rectangle.
Moreover, all these sampling points can be involved in the POHs selection process.
This way, more detailed information about each hyper-rectangle is considered.
\Cref{fig:overall} illustrates the selection, sampling, and subdivision procedures in the initialization and the subsequent first two iterations of \halrect{} for two- and three-dimensional test problems.

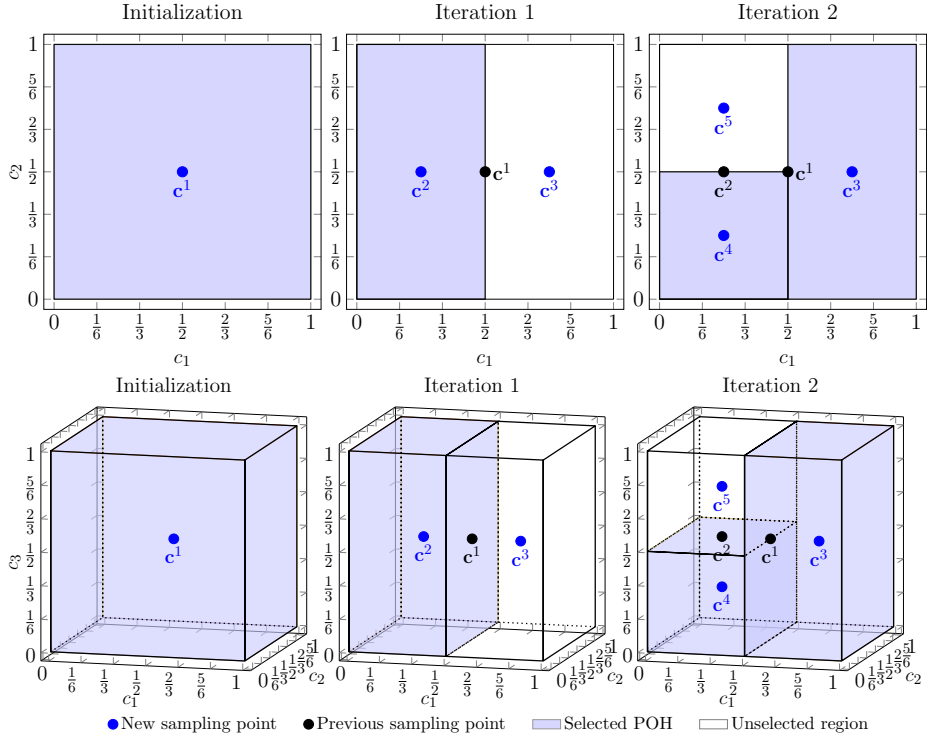
\begin{figure}[ht]
	\resizebox{\textwidth}{!}{
		\begin{tikzpicture}
			\begin{groupplot}[
				group style={
					group size=3 by 1,
					x descriptions at=edge bottom,
					y descriptions at=edge left,
					vertical sep=0pt,
					horizontal sep=15pt,
				},
				height=0.6\textwidth,width=0.6\textwidth,
				]
				\nextgroupplot[
				xlabel = {$c_1$},
				ylabel = {$c_2$},
				enlargelimits=0.04,
				title={Initialization},
				ylabel style={yshift=-0.1cm},
				xlabel style={yshift=-0.1cm},
				ytick distance=1/6,
				xtick distance=1/6,
				every axis/.append style={font=\LARGE},
				yticklabels={$0$, $0$,$\frac{1}{6}$, $\frac{1}{3}$, $\frac{1}{2}$, $\frac{2}{3}$, $\frac{5}{6}$, $1$},
				xticklabels={$0$, $0$,$\frac{1}{6}$, $\frac{1}{3}$, $\frac{1}{2}$, $\frac{2}{3}$, $\frac{5}{6}$, $1$},
				]
				\draw [black, thick, mark size=0.1pt,fill=blue!40,opacity=0.4] (axis cs:0,0) rectangle (axis cs:1,1);
				\addplot[only marks,mark=*,mark size=3pt,color=blue] coordinates {(0.5, 0.5) } node[yshift=-10pt] {$\mathbf{c}^1$} ;
				\addplot[patch,mesh,draw,black,patch type=rectangle] coordinates {
					(0,0) (1,0) (1,1) (0,1)};
				\nextgroupplot[
				xlabel = {$c_1$},
				enlargelimits=0.04,
				title={Iteration $1$},
				ylabel style={yshift=-0.6cm},
				xlabel style={yshift=-0.1cm},
				ytick distance=1/6,
				xtick distance=1/6,
				every axis/.append style={font=\LARGE},
				yticklabels={$0$, $0$,$\frac{1}{6}$, $\frac{1}{3}$, $\frac{1}{2}$, $\frac{2}{3}$, $\frac{5}{6}$, $1$},
				xticklabels={$0$, $0$,$\frac{1}{6}$, $\frac{1}{3}$, $\frac{1}{2}$, $\frac{2}{3}$, $\frac{5}{6}$, $1$},
				]
				\draw [black, thick, mark size=0.1pt,fill=blue!40,opacity=0.4] (axis cs:0,0) rectangle (axis cs:0.5,1);
				\addplot[only marks,mark=*,mark size=3pt,color=black] coordinates {(0.5, 0.5) } node[xshift=10pt] {$\mathbf{c}^1$} ;
				\addplot[only marks,mark=*,mark size=3pt,color=blue] coordinates {(0.25, 0.5) } node[yshift=-10pt] {$\mathbf{c}^2$} ;
				\addplot[only marks,mark=*,mark size=3pt,color=blue] coordinates {(0.75, 0.5) } node[yshift=-10pt] {$\mathbf{c}^3$} ;
				\addplot[patch,mesh,draw,black,patch type=rectangle] coordinates {
					(0,0) (1,0) (1,1) (0,1)
					(0,0) (0.5,0) (0.5,1) (0,1)};
				\nextgroupplot[
				xlabel = {$c_1$},
				enlargelimits=0.04,
				title={Iteration $2$},
				ylabel style={yshift=-0.6cm},
				xlabel style={yshift=-0.1cm},
				ytick distance=1/6,
				xtick distance=1/6,
				every axis/.append style={font=\LARGE},
				yticklabels={$0$, $0$,$\frac{1}{6}$, $\frac{1}{3}$, $\frac{1}{2}$, $\frac{2}{3}$, $\frac{5}{6}$, $1$},
				xticklabels={$0$, $0$,$\frac{1}{6}$, $\frac{1}{3}$, $\frac{1}{2}$, $\frac{2}{3}$, $\frac{5}{6}$, $1$},
				]
				\draw [black, thick, mark size=0.1pt,fill=blue!40,opacity=0.4] (axis cs:1,0) rectangle (axis cs:0.5,1);
				\draw [black, thick, mark size=0.1pt,fill=blue!40,opacity=0.4] (axis cs:0,0) rectangle (axis cs:0.5,0.5);
				\addplot[only marks,mark=*,mark size=3pt,color=black] coordinates {(0.5, 0.5) } node[xshift=10pt] {$\mathbf{c}^1$} ;
				\addplot[only marks,mark=*,mark size=3pt,color=black] coordinates {(0.25, 0.5) } node[yshift=-10pt] {$\mathbf{c}^2$} ;
				\addplot[only marks,mark=*,mark size=3pt,color=blue] coordinates {(0.75, 0.5) } node[yshift=-10pt] {$\mathbf{c}^3$} ;
				\addplot[only marks,mark=*,mark size=3pt,color=blue] coordinates {(0.25, 0.25) } node[yshift=-10pt] {$\mathbf{c}^4$} ;
				\addplot[only marks,mark=*,mark size=3pt,color=blue] coordinates {(0.25, 0.75) } node[yshift=-10pt] {$\mathbf{c}^5$} ;
				\addplot[patch,mesh,draw,black,patch type=rectangle] coordinates {
					(0,0) (1,0) (1,1) (0,1)
					(0,0) (0.5,0) (0.5,1) (0,1)
					(0,0) (0.5,0) (0.5,0.5) (0,0.5)};
			\end{groupplot}
	\end{tikzpicture}}
	\resizebox{\textwidth}{!}{
		\begin{tikzpicture}
			\begin{groupplot}[
				group style={
					group size=3 by 1,
					x descriptions at=edge bottom,
					vertical sep=0pt,
					horizontal sep=20pt,
				},
				height=0.6\textwidth,width=0.6\textwidth,
				]
				\nextgroupplot[
				view={15}{10},
				title={Initialization},
				xlabel = {$c_1$},
				ylabel = {$c_2$},
				zlabel = {$c_3$},
				enlargelimits=0.04,
				xlabel style={xshift=0.2cm,yshift=0.2cm},
				ylabel style={yshift=0.3cm},
				zlabel style={yshift=-0.2cm},
				ytick distance=1/6,
				xtick distance=1/6,
				ztick distance=1/6,
				every axis/.append style={font=\LARGE},
				yticklabels={$0$, $0$,$\frac{1}{6}$, $\frac{1}{3}$, $\frac{1}{2}$, $\frac{2}{3}$, $\frac{5}{6}$, $1$},
				xticklabels={$0$, $0$,$\frac{1}{6}$, $\frac{1}{3}$, $\frac{1}{2}$, $\frac{2}{3}$, $\frac{5}{6}$, $1$},
				zticklabels={$0$, $0$,$\frac{1}{6}$, $\frac{1}{3}$, $\frac{1}{2}$, $\frac{2}{3}$, $\frac{5}{6}$, $1$},
				]
				\addplot3[patch,patch type=rectangle,fill=blue!20,opacity=0.4] coordinates {
					(0,0,0) (1,0,0) (1,1,0) (0,1,0)
					(0,0,0) (1,0,0) (1,0,1) (0,0,1)
					(1,0,0) (1,1,0) (1,1,1) (1,0,1)
					(1,1,0) (1,1,1) (0,1,1) (0,1,0)
					(0,1,0) (0,1,1) (0,0,1) (0,0,0)
					(0,0,1) (1,0,1) (1,1,1) (0,1,1)
				};
				\addplot3[style=thick,mark=none,color=black] coordinates {(0,0,0) (0,0,1) (0,1,1) (1,1,1) (1,1,0) (1,0,0) (0,0,0) (0,0,1) (1,0,1) (1,0,0) (1,0,1) (1,1,1)};
				\addplot3[style=dotted,style=thick,mark=none,color=black] coordinates {(0,0,0) (0,1,0) (0,1,1) (0,1,0) (1,1,0)};
				\addplot3[only marks,mark=*,mark size=3pt,color=blue] coordinates {(0.5, 0.5, 0.5) } node[yshift=-10pt] {$\mathbf{c}^1$};
				\nextgroupplot[
				view={15}{10},
				title={Iteration $1$},
				xlabel = {$c_1$},
				ylabel = {$c_2$},
				enlargelimits=0.04,
				xlabel style={xshift=0.2cm,yshift=0.2cm},
				ylabel style={yshift=0.3cm},
				zlabel style={yshift=-0.2cm},
				ytick distance=1/6,
				xtick distance=1/6,
				ztick distance=1/6,
				every axis/.append style={font=\LARGE},
				yticklabels={$0$, $0$,$\frac{1}{6}$, $\frac{1}{3}$, $\frac{1}{2}$, $\frac{2}{3}$, $\frac{5}{6}$, $1$},
				xticklabels={$0$, $0$,$\frac{1}{6}$, $\frac{1}{3}$, $\frac{1}{2}$, $\frac{2}{3}$, $\frac{5}{6}$, $1$},
				zticklabels={$0$, $0$,$\frac{1}{6}$, $\frac{1}{3}$, $\frac{1}{2}$, $\frac{2}{3}$, $\frac{5}{6}$, $1$},
				]
				\addplot3[patch,patch type=rectangle,fill=blue!20,opacity=0.4] coordinates {
					(0,0,0) (0.5,0,0) (0.5,1,0) (0,1,0)
					(0,0,0) (0.5,0,0) (0.5,0,1) (0,0,1)
					(0.5,0,0) (0.5,1,0) (0.5,1,1) (0.5,0,1)
					(0.5,1,0) (0.5,1,1) (0,1,1) (0,1,0)
					(0,1,0) (0,1,1) (0,0,1) (0,0,0)
					(0,0,1) (0.5,0,1) (0.5,1,1) (0,1,1)
				};
				\addplot3[style=thick,mark=none,color=black] coordinates {
					(0,0,0)   (0,0,1)   (0,1,1) (0.5,1,1)
					(0.5,0,1) (0.5,0,0) (0.5,0,1) (0.5,1,1)
					(1,1,1)   (1,1,0)   (1,0,0)   (0,0,0)
					(0,0,1)   (1,0,1)   (1,0,0)   (1,0,1)
					(1,1,1)};
				\addplot3[style=dotted,style=thick,mark=none,color=black] coordinates {
					(0,0,0)   (0,1,0)   (0,1,1)   (0,1,0)
					(0.5,1,0) (0.5,0,0) (0.5,1,0) (0.5,1,1)
					(0.5,1,0) (1,1,0)};
				\addplot3[only marks,mark=*,mark size=3pt,color=black] coordinates {(0.5, 0.5, 0.5) } node[yshift=-10pt] {$\mathbf{c}^1$};
				\addplot3[only marks,mark=*,mark size=3pt,color=blue] coordinates {(0.25, 0.5, 0.5) } node[yshift=-10pt] {$\mathbf{c}^2$};
				\addplot3[only marks,mark=*,mark size=3pt,color=blue] coordinates {(0.75, 0.5, 0.5) } node[yshift=-10pt] {$\mathbf{c}^3$};
				\nextgroupplot[
				view={15}{10},
				title={Iteration $2$},
				xlabel = {$c_1$},
				ylabel = {$c_2$},
				enlargelimits=0.04,
				xlabel style={xshift=0.2cm,yshift=0.2cm},
				ylabel style={yshift=0.3cm},
				zlabel style={yshift=-0.2cm},
				ytick distance=1/6,
				xtick distance=1/6,
				ztick distance=1/6,
				every axis/.append style={font=\LARGE},
				yticklabels={$0$, $0$,$\frac{1}{6}$, $\frac{1}{3}$, $\frac{1}{2}$, $\frac{2}{3}$, $\frac{5}{6}$, $1$},
				xticklabels={$0$, $0$,$\frac{1}{6}$, $\frac{1}{3}$, $\frac{1}{2}$, $\frac{2}{3}$, $\frac{5}{6}$, $1$},
				zticklabels={$0$, $0$,$\frac{1}{6}$, $\frac{1}{3}$, $\frac{1}{2}$, $\frac{2}{3}$, $\frac{5}{6}$, $1$},
				legend style={draw=none},
				legend columns=4,
				legend style={at={(0.9,-0.15)},font=\large},
				legend style={/tikz/every even column/.append style={column sep=0.5cm}}]
				]
				\addlegendimage{only marks,mark=*,mark size=3pt,color=blue}
				\addlegendentry{New sampling point}
				\addlegendimage{only marks,mark=*,mark size=3pt,color=black}
				\addlegendentry{Previous sampling point}
				\addlegendimage{area legend,fill=blue!40,opacity=0.4}
				\addlegendentry{Selected POH}
				\addlegendimage{area legend,black,fill=white,opacity=0.5}
				\addlegendentry{Unselected region}
				\addplot3[patch,patch type=rectangle,fill=blue!20,opacity=0.4] coordinates {
					(0,0,0) (0.5,0,0) (0.5,1,0) (0,1,0)
					(0,0,0) (0.5,0,0) (0.5,0,0.5) (0,0,0.5)
					(0.5,0,0) (0.5,1,0) (0.5,1,0.5) (0.5,0,0.5)
					(0.5,1,0) (0.5,1,0.5) (0,1,0.5) (0,1,0)
					(0,1,0) (0,1,0.5) (0,0,0.5) (0,0,0)
					(0,0,0.5) (0.5,0,0.5) (0.5,1,0.5) (0,1,0.5)
				};
				\addplot3[patch,patch type=rectangle,fill=blue!20,opacity=0.4] coordinates {
					(0.5,0,0) (1,0,0) (1,1,0) (0.5,1,0)
					(0.5,0,0) (1,0,0) (1,0,1) (0.5,0,1)
					(1,0,0) (1,1,0) (1,1,1) (1,0,1)
					(1,1,0) (1,1,1) (0.5,1,1) (0.5,1,0)
					(0.5,1,0) (0.5,1,1) (0.5,0,1) (0.5,0,0)
					(0.5,0,1) (1,0,1) (1,1,1) (0.5,1,1)
				};
				\addplot3[style=thick,mark=none,color=black] coordinates {
					(0,0,0)   (0,0,1)     (0,1,1)   (0.5,1,1)
					(0.5,0,1) (0.5,0,0.5) (0,0,0.5) (0.5,0,0.5)
					(0.5,0,0) (0.5,0,1)   (0.5,1,1) (1,1,1)
					(1,1,0)   (1,0,0)     (0,0,0)   (0,0,1)
					(1,0,1)   (1,0,0)     (1,0,1)   (1,1,1)};
				\addplot3[style=dotted,style=thick,mark=none,color=black] coordinates {
					(0,0,0)     (0,1,0)   (0,1,1)     (0,1,0.5)
					(0,0,0.5)   (0,1,0.5) (0.5,1,0.5) (0.5,0,0.5)
					(0.5,1,0.5) (0,1,0.5) (0,1,0)     (0.5,1,0)
					(0.5,0,0)   (0.5,1,0) (0.5,1,1)   (0.5,1,0)
					(1,1,0)};
				
				\addplot3[only marks,mark=*,mark size=3pt,color=black] coordinates {(0.5, 0.5, 0.5) } node[yshift=-10pt] {$\mathbf{c}^1$};
				\addplot3[only marks,mark=*,mark size=3pt,color=black] coordinates {(0.25, 0.5, 0.5) } node[yshift=-10pt] {$\mathbf{c}^2$};
				\addplot3[only marks,mark=*,mark size=3pt,color=blue] coordinates {(0.75, 0.5, 0.5) } node[yshift=-10pt] {$\mathbf{c}^3$};
				\addplot3[only marks,mark=*,mark size=3pt,color=blue] coordinates {(0.25, 0.5, 0.25) } node[yshift=-10pt] {$\mathbf{c}^4$};
				\addplot3[only marks,mark=*,mark size=3pt,color=blue] coordinates {(0.25, 0.5, 0.75) } node[yshift=-10pt] {$\mathbf{c}^5$};
				
			\end{groupplot}
	\end{tikzpicture}}
	\caption{Illustration of selection, sampling and partitioning schemes used in the \halrect{} algorithm on two-dimensional (upper part) and three-dimensional (lower part) test problems.}
	\label{fig:overall}
\end{figure}

Now let us formalize the sampling and partitioning schemes used in \halrect.
In iteration $k$, the current partition ($\mathcal{P}_k$) and hyper-rectangle $(\bar{D}^i_k)$ are defined as in \cref{eq:partition_set,eq:rectangle}, where $ \indexsett_k$ is the index set of the current partition.
Additionally, for each hyper-rectangle, we define the representative sampling index set $\mathbb{H}^i_k$ storing the indices $(i)$ of all sampled points $(\mathbf{c}^i)$ within the hyper-rectangle at which the objective function has already been evaluated.
We note that initially sampled midpoints, after subdivision (bisection), change their position and later are located on facets of hyper-rectangles (see \Cref{fig:overall}).

Using these notations, at the initial $(k=0)$ and the first two iterations, the current partition ($\mathcal{P}_k$) and the representative sampling index sets $(\mathbb{H}^i_k)$ are
\begin{align*}
	\mathcal{P}_0 &= \{ \bar{D}^1_0 \},  \mathbb{H}^1_0 = \{ 1 \},\\
	\mathcal{P}_1 &= \{ \bar{D}^2_1,  \bar{D}^3_1\},  \mathbb{H}^2_1 = \{ 1, 2 \},  \mathbb{H}^3_1 = \{ 1, 3 \},\\
	\mathcal{P}_2 &= \{ \bar{D}^3_2,  \bar{D}^4_2, \bar{D}^5_2 \},
	\mathbb{H}^3_2 = \{ 1, 3 \}, \mathbb{H}^4_2 = \{ 1, 2, 4 \}, \mathbb{H}^5_2 = \{ 1, 2, 5 \}.
\end{align*}

Selected POHs (\cref{ssec:selection-strategies} describes the selection process) are bisected only along one coordinate with the maximum side length.
\Cref{alg:branching-coordinate} describes the procedure used in \halrect{} to select the branching variable, i.e., coordinate index $(br \in \{1,...,n \})$.

\begin{algorithm}[ht]
	\caption{Branching coordinate index selection}\label{alg:branching-coordinate}
	\begin{algorithmic}[1]
		\Require {Selected POH $(\bar{D}^i_k)$, new sampling point $(\mathbf{c}^{i} \in \bar{D}^i_k)$, current minimum point $(\mathbf{c}^{\rm min})$; }
		\Ensure {Branching coordinate index $(br)$ ;}
		\algrule
		\State Find all the longest sides (indices of corresponding coordinates)
		\begin{equation}
			\lambda_1 = \argmax_{j=1,...,n} \left\{ d^i_j = \mid \bar{b}^i_j - \bar{a}^i_j \mid \right\}; \algorithmiccomment{\text{See~\cref{eq:rectangle}}}
		\end{equation}
		\State Find the furthest coordinate(s) from~$\mathbf{c}^{i}$ to $\mathbf{c}^{\rm min}$
		\begin{equation}
			\lambda_2 = \argmax_{j \in \lambda_1} \left\{ \mid c^i_{j} - c^{\rm min}_{j} \mid \right\};
		\end{equation}
		\State Select the coordinate with the smallest index
		\begin{equation}
			br = \min_{j \in \lambda_2}{j}.
		\end{equation}
		\textbf{Return} $br$.
	\end{algorithmic}
\end{algorithm}

\begin{example}
	In \Cref{fig:example}, an illustration of branching variable selection is given in the \halrect{} algorithm moving from the second to the third iteration.
	In the second iteration ($k = 2$), there are two POHs ($\bar{D}^3_2$ and $\bar{D}^4_2$).
	For $\bar{D}^3_2$ there is only one longest side (coordinate $j = 2$ with the side length $d^3_2 = 1$), therefore \Cref{alg:branching-coordinate} returns $br = 2$.
	However, for $\bar{D}^3_2$, at Step 1 of \Cref{alg:branching-coordinate}, both sides are equal, and therefore $\lambda_1 = \{ 1, 2\}$.
	Since the midpoint $\mathbf{c}^{4}$ is also a current minimum point $(\mathbf{c}^{\rm min})$, after Step 2, the set $\lambda_2 = \{ 1, 2\}$.
	Finally, the coordinate with the smallest index value ($br = 1$) is selected in the third step and returned.
	
	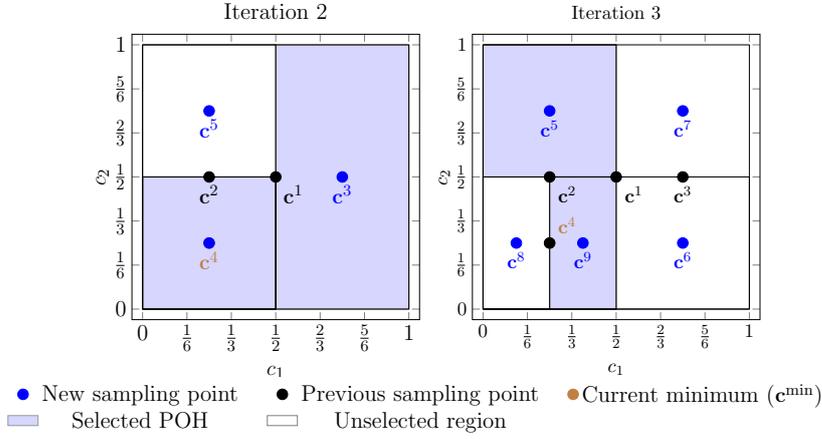
\begin{figure}[ht]
		\centering
		\resizebox{0.9\textwidth}{!}{
			\begin{tikzpicture}
				\begin{groupplot}[
					group style={
						group size=2 by 1,
						x descriptions at=edge bottom,
						y descriptions at=edge left,
						vertical sep=0pt,
						horizontal sep=30pt,
					},
					height=0.6\textwidth,width=0.6\textwidth,
					]
					\nextgroupplot[
					xlabel = {$c_1$},
					ylabel = {$c_2$},
					enlargelimits=0.04,
					title={Iteration $2$},
					ylabel style={yshift=-0.1cm},
					xlabel style={yshift=-0.1cm},
					ytick distance=1/6,
					xtick distance=1/6,
					every axis/.append style={font=\LARGE},
					yticklabels={$0$, $0$,$\frac{1}{6}$, $\frac{1}{3}$, $\frac{1}{2}$, $\frac{2}{3}$, $\frac{5}{6}$, $1$},
					xticklabels={$0$, $0$,$\frac{1}{6}$, $\frac{1}{3}$, $\frac{1}{2}$, $\frac{2}{3}$, $\frac{5}{6}$, $1$},
					]
					\draw [black, thick, mark size=0.1pt,fill=blue!40,opacity=0.4] (axis cs:1,0) rectangle (axis cs:0.5,1);
					\draw [black, thick, mark size=0.1pt,fill=blue!40,opacity=0.4] (axis cs:0,0) rectangle (axis cs:0.5,0.5);
					\addplot[only marks,mark=*,mark size=3pt,color=black] coordinates {(0.5, 0.5) } node[yshift=-10pt,xshift=10pt] {$\mathbf{c}^1$};
					\addplot[only marks,mark=*,mark size=3pt,color=black] coordinates {(0.25, 0.5) } node[yshift=-10pt] {$\mathbf{c}^{2}$} ;
					\addplot[only marks,mark=*,mark size=3pt,color=blue] coordinates {(0.75, 0.5) } node[yshift=-10pt] {$\mathbf{c}^3$} ;
					\addplot[only marks,mark=*,mark size=3pt,color=brown] coordinates {(0.25, 0.25) } node[yshift=-10pt] {$\mathbf{c}^4$} ;
					\addplot[only marks,mark=*,mark size=3pt,color=blue] coordinates {(0.25, 0.25) } ;
					\addplot[only marks,mark=*,mark size=3pt,color=blue] coordinates {(0.25, 0.75) } node[yshift=-10pt] {$\mathbf{c}^5$} ;
					\addplot[patch,mesh,draw,black,patch type=rectangle] coordinates {
						(0,0) (1,0) (1,1) (0,1)
						(0,0) (0.5,0) (0.5,1) (0,1)
						(0,0) (0.5,0) (0.5,0.5) (0,0.5)};
					\nextgroupplot[
					xlabel = {$c_1$},
					ylabel = {$c_2$},
					enlargelimits=0.04,
					title={Iteration $3$},
					ylabel style={yshift=-0.1cm},
					xlabel style={yshift=-0.1cm},
					ytick distance=1/6,
					xtick distance=1/6,
					every axis/.append style={font=\large},
					yticklabels={$0$, $0$,$\frac{1}{6}$, $\frac{1}{3}$, $\frac{1}{2}$, $\frac{2}{3}$, $\frac{5}{6}$, $1$},
					xticklabels={$0$, $0$,$\frac{1}{6}$, $\frac{1}{3}$, $\frac{1}{2}$, $\frac{2}{3}$, $\frac{5}{6}$, $1$},
					legend style={draw=none},
					legend columns=3,
					legend style={at={(1.25,-0.2)},font=\LARGE},
					legend style={/tikz/every even column/.append style={column sep=0.5cm}}
					]
					\addlegendimage{only marks,mark=*,mark size=3pt,color=blue}
					\addlegendentry{New sampling point}
					\addlegendimage{only marks,mark=*,mark size=3pt,color=black}
					\addlegendentry{Previous sampling point}
					\addlegendimage{only marks,mark=*,mark size=3pt,color=brown}
					\addlegendentry{Current minimum $(\mathbf{c}^{\rm min})$}
					\addlegendimage{area legend,fill=blue!40,opacity=0.4}
					\addlegendentry{Selected POH}
					\addlegendimage{area legend,black,fill=white,opacity=0.5}
					\addlegendentry{Unselected region}
					\draw [black, thick, mark size=0.1pt,fill=blue!40,opacity=0.4] (axis cs:0,0.5) rectangle (axis cs:0.5,1);
					\draw [black, thick, mark size=0.1pt,fill=blue!40,opacity=0.4] (axis cs:0.25,0) rectangle (axis cs:0.5,0.5);
					\addplot[only marks,mark=*,mark size=3pt,color=black] coordinates {(0.5, 0.5) }    node[yshift=-10pt, xshift=10pt] {$\mathbf{c}^1$} ;
					\addplot[only marks,mark=*,mark size=3pt,color=black] coordinates {(0.25, 0.5) }   node[yshift=-10pt, xshift=10pt] {$\mathbf{c}^{2}$} ;
					\addplot[only marks,mark=*,mark size=3pt,color=black] coordinates {(0.75, 0.5) }   node[yshift=-10pt] {$\mathbf{c}^3$} ;
					\addplot[only marks,mark=*,mark size=3pt,color=brown] coordinates {(0.25, 0.25) }  node[yshift=10pt, xshift=10pt] {$\mathbf{c}^4$} ;
					\addplot[only marks,mark=*,mark size=3pt,color=black] coordinates {(0.25, 0.25) } ;
					\addplot[only marks,mark=*,mark size=3pt,color=blue]  coordinates {(0.25, 0.75) }  node[yshift=-10pt] {$\mathbf{c}^5$} ;
					\addplot[only marks,mark=*,mark size=3pt,color=blue]  coordinates {(0.75, 0.25) }  node[yshift=-10pt] {$\mathbf{c}^6$} ;
					\addplot[only marks,mark=*,mark size=3pt,color=blue]  coordinates {(0.75, 0.75) }  node[yshift=-10pt] {$\mathbf{c}^7$} ;
					\addplot[only marks,mark=*,mark size=3pt,color=blue]  coordinates {(0.125, 0.25) } node[yshift=-10pt] {$\mathbf{c}^8$} ;
					\addplot[only marks,mark=*,mark size=3pt,color=blue]  coordinates {(0.375, 0.25) } node[yshift=-10pt] {$\mathbf{c}^9$} ;
					\addplot[patch,mesh,draw,black,patch type=rectangle]  coordinates {
						(0,0) (1,0) (1,1) (0,1)
						(0,0) (0.25, 0) (0.25,0.5) (0,0.5)
						(0,0) (0.5,0) (0.5,1) (0,1)
						(0,0) (0,0.5) (1,0.5) (1,0)};
				\end{groupplot}
		\end{tikzpicture}}
		\caption{Illustration of sampling and partitioning schemes used in the \halrect{} algorithm on a two-dimensional example moving from the second to the third iteration.}
		\label{fig:example}
	\end{figure}
\end{example}

When the branching coordinate $(br)$ is identified, each POH $(\bar{D}^i_k)$ is bisected into two equal smaller hyper-rectangles $\bar{D}^{\rm left}_k$ and $\bar{D}^{\rm right}_k$.
The new midpoints ($\mathbf{c}^{\rm left} \in \bar{D}^{\rm left}_k$ and $\mathbf{c}^{\rm right} \in \bar{D}^{\rm right}_k$) are located at the following positions:
\begin{equation}
	\label{eq:leftpoint}
	\mathbf{c}^{\rm left} = ( c_1^i, ..., c_{br}^i - \frac{d^i_{br}}{4} ,...,c_n^i ),
\end{equation}
\begin{equation}
	\label{eq:rightpoint}
	\mathbf{c}^{\rm right} = ( c_1^i, ..., c_{br}^i + \frac{d^i_{br}}{4} ,...,c_n^i ),
\end{equation}
where $\mathbf{c}^i \in \bar{D}^i_k$.
We note that naming new hyper-rectangles and midpoints as the ``left'' and the ``right'' is only relative.

Continuing in the same vein, after bisection of $\bar{D}^3_2$, new midpoints are located at:
\[
\mathbf{c}^{\rm left} = \mathbf{c}^{6} = \left( c_1^3, c_{2}^3 - \frac{d^3_{2}}{4} \right) = \left( \frac{3}{4}, \frac{1}{4} \right),
\]
\[
\mathbf{c}^{\rm right} = \mathbf{c}^{7} = \left( c_1^3, c_{2}^3 + \frac{d^3_{2}}{4} \right) = \left( \frac{3}{4}, \frac{3}{4} \right).
\]
After bisection of $\bar{D}^4_2$, new sampling points are located at:
\[
\mathbf{c}^{\rm left} = \mathbf{c}^{8} = \left( c_{1}^4 - \frac{d^4_{1}}{4}, c_2^4 \right) = \left( \frac{1}{8}, \frac{1}{4} \right),
\]
\[
\mathbf{c}^{\rm right} = \mathbf{c}^{9} = \left( c_{1}^4 + \frac{d^4_{1}}{4}, c_2^4 \right) = \left( \frac{3}{8}, \frac{3}{4} \right).
\]

The illustration of sampled search space  after ten iterations using the \halrect{} algorithm on \textit{Sum\_of\_Powers} function is given in \Cref{fig:sampling}.

\begin{figure}[htbp]
	\centering
	\resizebox{0.9\textwidth}{!}{
		\begin{tikzpicture}
			\begin{axis}[
				width=0.8\textwidth,height=0.8\textwidth,
				xlabel = {$c_1$},
				ylabel = {$c_2$},
				title={2-dimensional case},
				enlargelimits=0.05,
				ylabel style={yshift=-0.1cm},
				xlabel style={yshift=-0.1cm},
				ytick distance=1/6,
				xtick distance=1/6,
				every axis/.append style={font=\LARGE},
				yticklabels={$0$, $0$,$\frac{1}{6}$, $\frac{1}{3}$, $\frac{1}{2}$, $\frac{2}{3}$, $\frac{5}{6}$, $1$},
				xticklabels={$0$, $0$,$\frac{1}{6}$, $\frac{1}{3}$, $\frac{1}{2}$, $\frac{2}{3}$, $\frac{5}{6}$, $1$},
				]
				\addplot[thick,patch,mesh,draw,black,patch type=rectangle,line width=0.25mm] coordinates {(0,0)(1,0)(1,1) (0,1)};
				\addplot[thick,patch,mesh,draw,black,patch type=line,dashed,line width=0.25mm] coordinates {(0.25,0.5)(0.75,0.5)};
				\addplot[thick,patch,mesh,draw,black,patch type=line,dashed,line width=0.25mm] coordinates {(0.25,0.75)(0.25,0.25)};
				\addplot[thick,patch,mesh,draw,black,patch type=line,dashed,line width=0.25mm] coordinates {(0.75,0.75)(0.75,0.25)};
				\addplot[thick,patch,mesh,draw,black,patch type=line,dashed,line width=0.25mm] coordinates {(0.865,0.75)(0.635,0.75)};
				\addplot[thick,patch,mesh,draw,black,patch type=line,dashed,line width=0.25mm] coordinates {(0.865,0.25)(0.635,0.25)};
				\addplot[thick,patch,mesh,draw,black,patch type=line,dashed,line width=0.25mm] coordinates {(0.625,0.365)(0.625,0.135)};
				\addplot[thick,patch,mesh,draw,black,patch type=line,dashed,line width=0.25mm] coordinates {(0.375,0.25)(0.125,0.25)};
				\addplot[thick,patch,mesh,draw,black,patch type=line,dashed,line width=0.25mm] coordinates {(0.365,0.75)(0.135,0.75)};
				\addplot[thick,patch,mesh,draw,black,patch type=line,dashed,line width=0.25mm] coordinates {(0.125,0.635)(0.125,0.865)};
				\addplot[thick,patch,mesh,draw,black,patch type=line,dashed,line width=0.25mm] coordinates {(0.125,0.365)(0.125,0.135)};
				\addplot[thick,patch,mesh,draw,black,patch type=line,dashed,line width=0.25mm] coordinates {(0.375,0.365)(0.375,0.135)};
				\addplot[thick,patch,mesh,draw,black,patch type=line,dashed,line width=0.25mm] coordinates {(0.3225,0.125)(0.4275,0.125)};
				\addplot[thick,patch,mesh,draw,black,patch type=line,dashed,line width=0.25mm] coordinates {(0.0725,0.125)(0.1775,0.125)};
				\addplot[thick,patch,mesh,draw,black,patch type=line,dashed,line width=0.25mm] coordinates {(0.0725,0.375)(0.1775,0.375)};
				\addplot[thick,patch,mesh,draw,black,patch type=line,dashed,line width=0.25mm] coordinates {(0.0625,0.0725)(0.0625,0.1775)};
				\addplot[thick,patch,mesh,draw,black,patch type=line,dashed,line width=0.25mm] coordinates {(0.1875,0.0725)(0.1875,0.1775)};
				\addplot[thick,patch,mesh,draw,black,patch type=line,dashed,line width=0.25mm] coordinates {(0.04125, 0.0625)(0.08375, 0.0625)};
				\addplot[only marks,mark=*,mark size=2pt,color=black] coordinates {(0.5, 0.5)};
				\addplot[only marks,mark=*,mark size=2pt,color=black] coordinates {(0.25, 0.5)};
				\addplot[only marks,mark=*,mark size=2pt,color=black] coordinates {(0.75, 0.5)};
				\addplot[only marks,mark=*,mark size=2pt,color=black] coordinates {(0.25, 0.25)};
				\addplot[only marks,mark=*,mark size=2pt,color=black] coordinates {(0.25, 0.75)};
				\addplot[only marks,mark=*,mark size=2pt,color=black] coordinates {(0.125, 0.25)};
				\addplot[only marks,mark=*,mark size=2pt,color=black] coordinates {(0.375, 0.25)};
				\addplot[only marks,mark=*,mark size=2pt,color=black] coordinates {(0.75, 0.25)};
				\addplot[only marks,mark=*,mark size=2pt,color=black] coordinates {(0.75, 0.75)};
				\addplot[only marks,mark=*,mark size=2pt,color=black] coordinates {(0.125, 0.75)};
				\addplot[only marks,mark=*,mark size=2pt,color=blue] coordinates {(0.375, 0.75)};
				\addplot[only marks,mark=*,mark size=2pt,color=black] coordinates {(0.125, 0.125)};
				\addplot[only marks,mark=*,mark size=2pt,color=black] coordinates {(0.125, 0.375)};
				\addplot[only marks,mark=*,mark size=2pt,color=black] coordinates {(0.625, 0.25)};
				\addplot[only marks,mark=*,mark size=2pt,color=blue] coordinates {(0.875, 0.25)};
				\addplot[only marks,mark=*,mark size=2pt,color=black] coordinates {(0.375, 0.125)};
				\addplot[only marks,mark=*,mark size=2pt,color=blue] coordinates {(0.375, 0.375)};
				\addplot[only marks,mark=*,mark size=2pt,color=black] coordinates {(0.0625, 0.125)};
				\addplot[only marks,mark=*,mark size=2pt,color=black] coordinates {(0.1875, 0.125)};
				\addplot[only marks,mark=*,mark size=2pt,color=blue] coordinates {(0.625, 0.75)};
				\addplot[only marks,mark=*,mark size=2pt,color=blue] coordinates {(0.875, 0.75)};
				\addplot[only marks,mark=*,mark size=2pt,color=blue] coordinates {(0.125, 0.625)};
				\addplot[only marks,mark=*,mark size=2pt,color=blue] coordinates {(0.125, 0.875)};
				\addplot[only marks,mark=*,mark size=2pt,color=blue] coordinates {(0.0625, 0.375)};
				\addplot[only marks,mark=*,mark size=2pt,color=blue] coordinates {(0.1875, 0.375)};
				\addplot[only marks,mark=*,mark size=2pt,color=black] coordinates {(0.0625, 0.0625)};
				\addplot[only marks,mark=*,mark size=2pt,color=blue] coordinates {(0.0625, 0.1875)};
				\addplot[only marks,mark=*,mark size=2pt,color=blue] coordinates {(0.625, 0.125)};
				\addplot[only marks,mark=*,mark size=2pt,color=blue] coordinates {(0.625, 0.375)};
				\addplot[only marks,mark=*,mark size=2pt,color=blue] coordinates {(0.3125, 0.125)};
				\addplot[only marks,mark=*,mark size=2pt,color=blue] coordinates {(0.4375, 0.125)};
				\addplot[only marks,mark=*,mark size=2pt,color=blue] coordinates {(0.1875, 0.0625)};
				\addplot[only marks,mark=*,mark size=2pt,color=blue] coordinates {(0.1875, 0.1875)};
				\addplot[only marks,mark=*,mark size=2pt,color=blue] coordinates {(0.03125, 0.0625)};
				\addplot[only marks,mark=*,mark size=2pt,color=blue] coordinates {(0.09375, 0.0625)};
			\end{axis}
		\end{tikzpicture}
		\begin{tikzpicture}
			\begin{axis}[
				view={15}{10},
				title={3-dimensional case},
				width=0.8\textwidth,height=0.8\textwidth,
				xlabel = {$c_1$},
				ylabel = {$c_2$},
				zlabel = {$c_3$},
				enlargelimits=0.04,
				xlabel style={xshift=0.2cm,yshift=0.2cm},
				ylabel style={yshift=0.3cm},
				zlabel style={yshift=-0.2cm},
				ytick distance=1/6,
				xtick distance=1/6,
				ztick distance=1/6,
				every axis/.append style={font=\LARGE},
				yticklabels={$0$, $0$,$\frac{1}{6}$, $\frac{1}{3}$, $\frac{1}{2}$, $\frac{2}{3}$, $\frac{5}{6}$, $1$},
				xticklabels={$0$, $0$,$\frac{1}{6}$, $\frac{1}{3}$, $\frac{1}{2}$, $\frac{2}{3}$, $\frac{5}{6}$, $1$},
				zticklabels={$0$, $0$,$\frac{1}{6}$, $\frac{1}{3}$, $\frac{1}{2}$, $\frac{2}{3}$, $\frac{5}{6}$, $1$},
				]
				\addplot3[thick,patch,mesh,draw,black,patch type=rectangle,line width=0.4mm] coordinates {
					(0,0,0) (1,0,0) (1,1,0) (0,1,0)
					(0,0,0) (1,0,0) (1,0,1) (0,0,1)
					(1,0,0) (1,1,0) (1,1,1) (1,0,1)
					(1,1,0) (1,1,1) (0,1,1) (0,1,0)
					(0,1,0) (0,1,1) (0,0,1) (0,0,0)
					(0,0,1) (1,0,1) (1,1,1) (0,1,1)
				};
				\addplot3[thick,patch,mesh,draw,black,patch type=line,dashed,line width=0.25mm] coordinates {(0.25,0.5,0.5) (0.75,0.5,0.5)};
				\addplot3[thick,patch,mesh,draw,black,patch type=line,dashed,line width=0.25mm] coordinates {(0.25,0.5,0.25) (0.25,0.5,0.75)};
				\addplot3[thick,patch,mesh,draw,black,patch type=line,dashed,line width=0.25mm] coordinates {(0.25,0.25,0.25) (0.25,0.71,0.25)};
				\addplot3[thick,patch,mesh,draw,black,patch type=line,dashed,line width=0.25mm] coordinates {(0.75,0.25,0.5) (0.75,0.75,0.5)};
				\addplot3[thick,patch,mesh,draw,black,patch type=line,dashed,line width=0.25mm] coordinates {(0.25, 0.25, 0.125) (0.25, 0.25, 0.365)};
				\addplot3[thick,patch,mesh,draw,black,patch type=line,dashed,line width=0.25mm] coordinates {(0.75, 0.25, 0.25) (0.75, 0.25, 0.74)};
				\addplot3[thick,patch,mesh,draw,black,patch type=line,dashed,line width=0.25mm] coordinates {(0.75, 0.75, 0.26) (0.75, 0.75, 0.74)};
				\addplot3[thick,patch,mesh,draw,black,patch type=line,dashed,line width=0.25mm] coordinates {(0.125, 0.25, 0.125) (0.36, 0.25, 0.125)};
				\addplot3[thick,patch,mesh,draw,black,patch type=line,dashed,line width=0.25mm] coordinates {(0.125, 0.125, 0.125) (0.125, 0.335, 0.125)};
				\addplot3[thick,patch,mesh,draw,black,patch type=line,dashed,line width=0.25mm] coordinates {(0.125, 0.125, 0.0625) 	(0.125, 0.125, 0.1775)};
				\addplot3[thick,patch,mesh,draw,black,patch type=line,dashed,line width=0.25mm] coordinates {(0.25, 0.25, 0.75) 	(0.25, 0.71, 0.75)};
				\addplot3[thick,patch,mesh,draw,black,patch type=line,dashed,line width=0.25mm] coordinates {(0.25, 0.25, 0.635) 	(0.25, 0.25, 0.865)};
				\addplot3[thick,patch,mesh,draw,black,patch type=line,dashed,line width=0.25mm] coordinates {(0.125, 0.0625, 0.0625) 	(0.125, 0.1875, 0.0625)};
				\addplot3[thick,patch,mesh,draw,black,patch type=line,dashed,line width=0.25mm] coordinates {(0.0725, 0.0625, 0.0625) 	(0.1775, 0.0625, 0.0625)};
				\addplot3[thick,patch,mesh,draw,black,patch type=line,dashed,line width=0.25mm] coordinates {(0.635, 0.25, 0.25) 	(0.865, 0.25, 0.25)};
				\addplot3[only marks,mark=*,mark size=2pt,color=black] coordinates {(0.5, 0.5, 0.5) };
				\addplot3[only marks,mark=*,mark size=2pt,color=black] coordinates {(0.25, 0.5, 0.5) };
				\addplot3[only marks,mark=*,mark size=2pt,color=black] coordinates {(0.75, 0.5, 0.5) };
				\addplot3[only marks,mark=*,mark size=2pt,color=black] coordinates {(0.25, 0.5, 0.25) };
				\addplot3[only marks,mark=*,mark size=2pt,color=black] coordinates {(0.25, 0.5, 0.75) };
				\addplot3[only marks,mark=*,mark size=2pt,color=black] coordinates {(0.25, 0.25, 0.25) };
				\addplot3[only marks,mark=*,mark size=2pt,color=blue] coordinates {(0.25, 0.75, 0.25) };
				\addplot3[only marks,mark=*,mark size=2pt,color=black] coordinates {(0.75, 0.75, 0.5) };
				\addplot3[only marks,mark=*,mark size=2pt,color=black] coordinates {(0.75, 0.25, 0.5) };
				\addplot3[only marks,mark=*,mark size=2pt,color=black] coordinates {(0.25, 0.25, 0.125) };
				\addplot3[only marks,mark=*,mark size=2pt,color=blue] coordinates {(0.25, 0.25, 0.375) };
				\addplot3[only marks,mark=*,mark size=2pt,color=black] coordinates {(0.75, 0.25, 0.25) };
				\addplot3[only marks,mark=*,mark size=2pt,color=blue] coordinates {(0.75, 0.25, 0.75) };
				\addplot3[only marks,mark=*,mark size=2pt,color=blue] coordinates {(0.75, 0.75, 0.25) };
				\addplot3[only marks,mark=*,mark size=2pt,color=blue] coordinates {(0.75, 0.75, 0.75) };
				\addplot3[only marks,mark=*,mark size=2pt,color=black] coordinates {(0.125, 0.25, 0.125) };
				\addplot3[only marks,mark=*,mark size=2pt,color=blue] coordinates {(0.375, 0.25, 0.125) };
				\addplot3[only marks,mark=*,mark size=2pt,color=black] coordinates {(0.125, 0.125, 0.125) };
				\addplot3[only marks,mark=*,mark size=2pt,color=blue] coordinates {(0.125, 0.375, 0.125) };
				\addplot3[only marks,mark=*,mark size=2pt,color=black] coordinates {(0.125, 0.125, 0.0625) };
				\addplot3[only marks,mark=*,mark size=2pt,color=blue] coordinates {(0.125, 0.125, 0.1875) };
				\addplot3[only marks,mark=*,mark size=2pt,color=black] coordinates {(0.25, 0.25, 0.75) };
				\addplot3[only marks,mark=*,mark size=2pt,color=blue] coordinates {(0.25, 0.75, 0.75) };
				\addplot3[only marks,mark=*,mark size=2pt,color=blue] coordinates {(0.25, 0.25, 0.625) };
				\addplot3[only marks,mark=*,mark size=2pt,color=blue] coordinates {(0.25, 0.25, 0.875) };
				\addplot3[only marks,mark=*,mark size=2pt,color=blue] coordinates {(0.125, 0.0625, 0.0625) };
				\addplot3[only marks,mark=*,mark size=2pt,color=blue] coordinates {(0.125, 0.1875, 0.0625) };
				\addplot3[only marks,mark=*,mark size=2pt,color=blue] coordinates {(0.0625, 0.0625, 0.0625) };
				\addplot3[only marks,mark=*,mark size=2pt,color=blue] coordinates {(0.1875, 0.0625, 0.0625) };
				\addplot3[only marks,mark=*,mark size=2pt,color=blue] coordinates {(0.625, 0.25, 0.25) };
				\addplot3[only marks,mark=*,mark size=2pt,color=blue] coordinates {(0.875, 0.25, 0.25) };
			\end{axis}
	\end{tikzpicture}}
	\caption{The illustration of sampled points after $10$ iterations of the \halrect{} algorithm using two and three-dimensional \textit{Sum\_of\_Powers} functions.}
	\label{fig:sampling}
\end{figure}
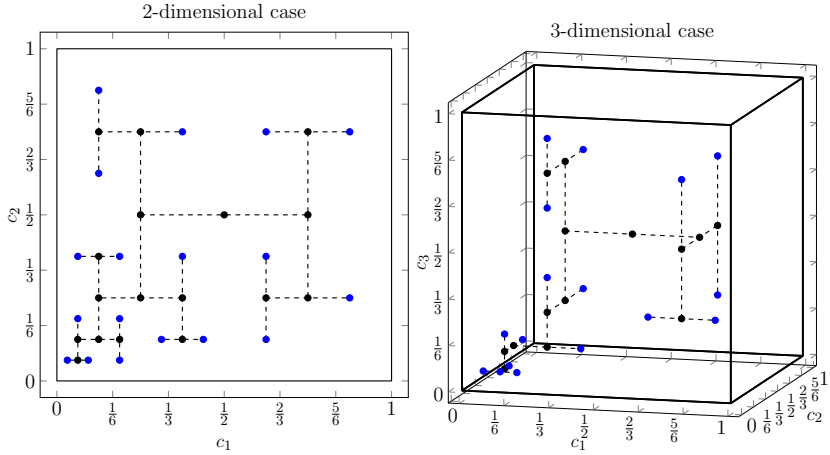

After subdivision, each POH $(\bar{D}^i_k)$ is removed, and two new ones are added to the list that describes the current partition:
\[
\mathcal{P}_{k + 1} = (\mathcal{P}_k \setminus \bar{D}^i_k ) \cup \bar{D}^{\rm left}_k \cup \bar{D}^{\rm right}_k.
\]
Therefore, moving from iteration two to three, hyper-rectangles $\bar{D}^{3}_2$ and $\bar{D}^{4}_2$ are removed from the partition $(\mathcal{P}_{2})$, and new ones are included:
\[
\mathcal{P}_{3} = \{\bar{D}^{5}_3, \bar{D}^{6}_3, \bar{D}^{7}_3, \bar{D}^{8}_3, \bar{D}^{9}_3 \}.
\]
New vectors of the representative index sets $\mathbb{H}^{\rm left}_k$ and $\mathbb{H}^{\rm right}_k$ are constructed based on the set $\mathbb{H}^i_k$ corresponding to the subdivided hyper-rectangle ($\bar{D}^i_k$).
The following rules are used to create them:
\begin{equation}
	\label{eq:leftrep}
	\mathbb{H}^{\rm left}_k = \{h \in \mathbb{H}^i_k : c_{br}^i \geq c_{br}^h \} \cup \{{\rm left}\},
\end{equation}
\begin{equation}
	\label{eq:rightrep}
	\mathbb{H}^{\rm right}_k = \{h \in \mathbb{H}^i_k : c_{br}^i \leq c_{br}^h \} \cup \{{\rm right}\}.
\end{equation}

\begin{example}
	Let us consider the subdivided hyper-rectangle $\bar{D}^{4}_2$, whose representative sampling index set $\mathbb{H}^{4}_2 = \{ 1, 2, 4\}$ (see \Cref{fig:example}).
	Then $\mathbb{H}^{\rm left}_3 $ and $\mathbb{H}^{\rm right}_3 $ consist of:
	\[
	\mathbb{H}^{\rm left}_3 = \mathbb{H}^{8}_3 = \left\{ h \in \mathbb{H}^4_2 : c_{1}^4 \geq c_{1}^h \right\} \cup \{8\} = \{ 2, 4, 8\},
	\]
	\[
	\mathbb{H}^{\rm right}_3 = \mathbb{H}^{9}_3 = \left\{ h \in \mathbb{H}^4_2 : c_{1}^4 \leq c_{1}^h \right\} \cup \{9\} = \{ 1, 2, 4, 9\}.
	\]
\end{example}

In the following subsection, we will show how these representative index sets $(\mathbb{H}^i_k)$ are used to select potentially optimal hyper-rectangles by taking into account up to $2 \times n+1$ objective function values over each hyper-rectangle.
But first we prove that the cardinality of $\mathbb{H}^i_k$ cannot exceed $2n + 1$.

\begin{corollary}\label{coroll1}
	The cardinality of any representative sampling index set $\mathbb{H}^i_k$ is less than or equal to $2 \times n+1$, i.e.,
	\begin{equation}\label{eq:coroll}
		\max_{i \in \indexsett_k, \forall k}{{\rm card}(\mathbb{H}^i_k) \leq 2 \times n+1}.
	\end{equation}
\end{corollary}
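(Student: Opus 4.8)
The plan is to track the ``creation history'' of $\bar{D}^i_k$ inside the binary subdivision tree and charge each element of $\mathbb{H}^i_k$ other than the midpoint $\mathbf{c}^i$ to a distinct facet of $\bar{D}^i_k$; since an $n$-dimensional hyper-rectangle has exactly $2n$ facets, this yields $\operatorname{card}(\mathbb{H}^i_k)\le 2n+1$. We may assume $\bar{D}^i_k\neq\bar{D}^1_0$, the root being trivial ($\operatorname{card}(\mathbb{H}^1_0)=1$). Write $\bar{D}^1_0=R_0\supset R_1\supset\dots\supset R_B=\bar{D}^i_k$ for the chain of subdivision ancestors, where $R_{t-1}$ is bisected along some coordinate $br_t$ and $R_t$ is one of the two halves, and let $\mathbf{c}^{(t)}$ denote the midpoint of $R_t$ (so $\mathbf{c}^{(B)}=\mathbf{c}^i$). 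Note that this chain, and hence everything below, is independent of how the branching coordinate is chosen in \Cref{alg:branching-coordinate}; only \eqref{eq:leftpoint}--\eqref{eq:rightpoint} and \eqref{eq:leftrep}--\eqref{eq:rightrep} are used.

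First I would identify the possible members of $\mathbb{H}^i_k$. By \eqref{eq:leftrep}--\eqref{eq:rightrep} every bisection obeys $\mathbb{H}^{\mathrm{child}}\subseteq\mathbb{H}^{\mathrm{parent}}\cup\{\text{index of the new midpoint}\}$, and the chain midpoints carry pairwise distinct indices, so descending the chain gives $\mathbb{H}^i_k\subseteq\{\mathbf{c}^{(0)},\dots,\mathbf{c}^{(B)}\}$ and, more usefully, that if $\mathbf{c}^{(t)}\in\mathbb{H}^i_k$ then $\mathbf{c}^{(t)}\in\mathbb{H}^{R_u}\subseteq R_u$ for all $t\le u\le B$. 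Thus it suffices to bound by $2n$ the number of indices $t\in\{0,\dots,B-1\}$ with $\mathbf{c}^{(t)}\in\mathbb{H}^i_k$.

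Next comes the geometry. Fix such a $t$. At step $t+1$ the bisecting hyperplane of $R_t$ (orthogonal to $br_{t+1}$) passes through the midpoint $\mathbf{c}^{(t)}$ by \eqref{eq:leftpoint}--\eqref{eq:rightpoint}, so $\mathbf{c}^{(t)}$ lies on the facet of $R_{t+1}$ separating $R_{t+1}$ from its discarded sibling. A boundary point of a box that still lies in a sub-box must lie on the corresponding facet of that sub-box, so (using the containment from the previous paragraph) $\mathbf{c}^{(t)}$ remains on this facet through $R_{t+1},\dots,R_B=\bar{D}^i_k$; call it the \emph{primary facet} $F(t)$. The crux is that $t\mapsto F(t)$ is injective on the surviving indices: a recorded point, once deposited on a facet, never comes off it and survives a later bisection along the same coordinate only if that bisection retains the half on its side, so two recorded points cannot have been deposited on the same facet from opposite ``time directions''. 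Concretely, if $t<t'$, both $\mathbf{c}^{(t)},\mathbf{c}^{(t')}\in\mathbb{H}^i_k$, and $F(t)=F(t')$, then $br_{t+1}=br_{t'+1}=:j$ and both facets lie on the same side in coordinate $j$, say the ``upper'' one, meaning that at steps $t+1$ and $t'+1$ the lower $j$-half is retained; but $\mathbf{c}^{(t)}$, sitting on the upper $j$-facet of $R_{t'}$, has $j$-coordinate equal to the upper endpoint of the $j$-range of $R_{t'}$, so retaining the lower $j$-half at step $t'+1$ expels $\mathbf{c}^{(t)}$ from $R_{t'+1}$, contradicting $\mathbf{c}^{(t)}\in\mathbb{H}^{R_{t'+1}}\subseteq R_{t'+1}$. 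Hence at most $2n$ indices $t<B$ survive, and $\operatorname{card}(\mathbb{H}^i_k)\le 2n+1$.

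The essential difficulty is not a single hard estimate but the bookkeeping: verifying that membership propagates down the chain and that the chain midpoints are distinct (so the charging is well defined), and the ``persistence of facets'' fact that underpins both the definition of $F(t)$ and the contradiction above. The orientation argument in the last step is where the constant is truly fixed at $2n+1$, and it should be phrased purely in terms of ``which half was retained'', so as not to assume that consecutive bisections use distinct coordinates --- the branching rule in \Cref{alg:branching-coordinate} does not by itself guarantee that.
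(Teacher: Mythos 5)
Your proof is correct and follows essentially the same route as the paper's: every element of $\mathbb{H}^i_k$ other than the current midpoint is charged to a distinct facet of $\bar{D}^i_k$, giving at most $2n+1$ points. You simply make rigorous (via the ancestor chain, persistence of deposited points on facets, and the injectivity of the facet assignment) what the paper's proof asserts informally, namely that only one recorded point can occupy each facet.
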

\begin{proof}
	In \halrect{}, selected POHs are bisected only along one coordinate with the maximum side length.
	Without loss of generality, assume that $br = 1$, i.e., the branching (bisection) on the $x_1$ variable takes place.
	As a result, the midpoint, after bisection, shifts on the ``left'' and on the ``right'' facet of two newly created hyper-rectangles (see the middle part for two and three-dimensional illustrations in \Cref{fig:overall}).
	This way, each subdivided hyper-rectangle cuts off the old facet and replaces it with a new one.
	Therefore, only one point can appear on one facet concerning the branching variable.
	
	Throughout the search process (as the number of iterations $k$ increases), all this will be applied to other branching variables ($x_2, \dots, x_n$) too. 
	From this follows, that the set $\mathbb{H}^i_k$ is constructed only by points located in the hyper-rectangular facets and one midpoint.
	As each hyper-rectangle contains $2 \times n$ facets, the maximal number of $2 \times n + 1$ points can be included in $\mathbb{H}^i_k$.
	
	%
	%
\end{proof}

\subsection{Selection of potentially optimal hyper-rectangles}
\label{ssec:selection-strategies}

Since the objective function in the \halrect{} algorithm is evaluated at multiple points, more comprehensive information about the objective function values can be efficiently integrated into the selection scheme.
In \Cref{def:potOptRectN}, we introduce four different selection schemes implemented in the new \halrect{} algorithm,
where the main difference is how the value $\mathcal{F}^i_k$ is calculated (see \cref{eq:eq1,eq:eq2,eq:eq3,eq:eq4}).

\begin{definition}{(\halrect{} selection)}
	\label{def:potOptRectN}
	Let \textcolor{blue}{$ \mathbf{c}^i$} $\in \bar{D}^i_k$ denote the midpoint, $ \mathbf{c}^j \in \bar{D}^i_k, j \in \mathbb{H}^i_k$ denote all sampling points (including \textcolor{blue}{$ \mathbf{c}^i $}) of hyper-rectangle $(\bar{D}^i_k)$, $ \textrm{card}( \mathbb{H}^i_k ) $ -- the cardinality of $(\mathbb{H}^i_k)$, $ \delta^i_k $ be a measure of $ \bar{D}^i_k$, and $\mathcal{F}^i_k$ -- aggregated value based on objective function values attained at sampling point(s) whose indices belong to $ \mathbb{H}^i_k $.
	Let $ \varepsilon > 0 $ be a positive constant and $f^{\min}$ be the best currently found objective function value.
	A hyper-rectangle $ \bar{D}^h_k, h \in \indexsett_k $ is said to be potentially optimal if there exists some rate-of-change (Lipschitz) constant $ \tilde{L} > 0$ such that
	\begin{eqnarray}
		\mathcal{F}^h_k - \tilde{L}\delta^h_k & \leq & \mathcal{F}^i_k - \tilde{L}\delta^i_k, \quad \forall i \in \indexsett_k, \label{eq:potOptRect1N} \\
		\mathcal{F}^h_k - \tilde{L}\delta^h_k & \leq & f^{\min} - \varepsilon \abs{f^{\min}}, \label{eq:potOptRect2N}
	\end{eqnarray}
	where the measure of the hyper-rectangle $ \bar{D}^i_k$ is
	\begin{equation}
		\label{eq:distanceN}
		\delta^i_k = \left\| {\mathbf{b}}_k^i - {\mathbf{a}}_k^i \right\|,
	\end{equation}
	and $\mathcal{F}^i_k$ is defined in one of the following four ways
	\begin{subequations}
		\label{eq:main}
		\begin{align}
			\label{eq:eq1}
			\mathcal{F}^i_k &= f(\textcolor{blue}{\mathbf{c}^i})\\
			\label{eq:eq2}
			\mathcal{F}^i_k &= \min_{j \in \mathbb{H}^i_k} f(\mathbf{c}^j)\\
			\label{eq:eq3}
			\mathcal{F}^i_k &= \dfrac{1}{\textrm{card}( \mathbb{H}^i_k )} \sum_{j=1}^{\textrm{card}( \mathbb{H}^i_k )} f(\mathbf{c}^j)\\
			\label{eq:eq4}
			\mathcal{F}^i_k &= \frac{1}{2}\left( \min_{j \in \mathbb{H}^i_k} f(\mathbf{c}^j) +  f(\textcolor{blue}{\mathbf{c}^i}) \right)
		\end{align}
	\end{subequations}
\end{definition}

\subsubsection{Midpoint value based selection}
\label{ssec:midpoint-value-selection}
\Cref{def:potOptRect} is typically used in most existing \direct-type algorithmic modifications to select POHs.
Geometrical visualization of the selection scheme used in \direct{} was shown in \Cref{fig:poh}.
The same selection scheme could be directly applied using the new sampling and partitioning strategy proposed in \halrect, as the midpoint is always included in the sampling set.
It is obtained by using \cref{eq:eq1} in \Cref{def:potOptRectN}.

For the illustrative comparison of all selection schemes, we will use partitioned space in the seventh iteration of the \halrect{} algorithm solving the two-dimensional \textit{Bukin6} test function. 
The selected POHs using this selection scheme are shown in part  (a) on the right panel of \Cref{fig:divides}.
$Y$-axis shows the objective function values attained at the midpoints $f(\textcolor{blue}{\mathbf{c}^i})$ of hyper-rectangles belonging to the current partition.
These values can also be seen on the left panel of \Cref{fig:divides}.
The apparent drawback is that the midpoints of previously partitioned hyper-rectangles (see black dots on the left panel of \Cref{fig:divides}) are not involved in POH selection anymore.

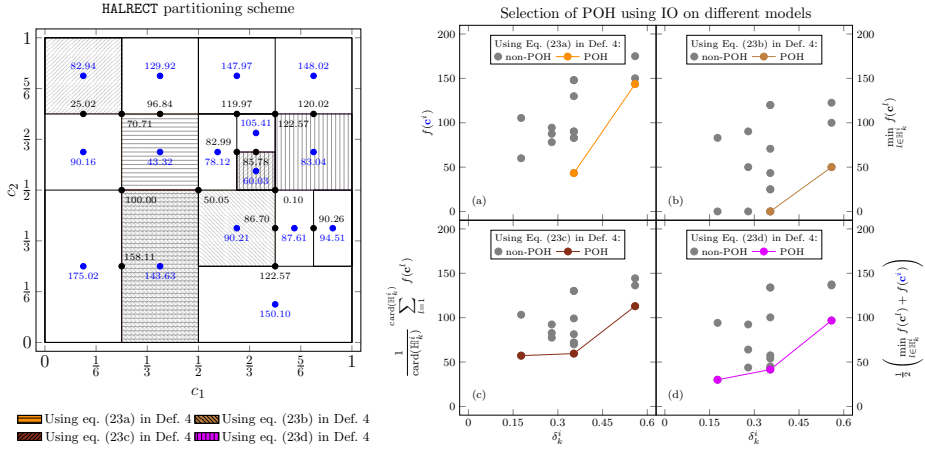
\begin{figure}[ht]
	\resizebox{\textwidth}{!}{
		\begin{tikzpicture}
			\begin{axis}[
				width=0.8\textwidth,height=0.8\textwidth,
				legend style={draw=none},
				legend columns=2,
				legend style={at={(1.05,-0.175)},font=\normalsize},
				xlabel = {$c_1$},
				ylabel = {$c_2$},
				ymin=0,ymax=1,
				xmin=0,xmax=1,
				enlargelimits=0.029,
				title={\large  \halrect{} partitioning scheme},
				ylabel style={yshift=-0.1cm},
				xlabel style={yshift=-0.1cm},
				ytick distance=1/6,
				xtick distance=1/6,
				every axis/.append style={font=\LARGE},
				yticklabels={$0$, $0$,$\frac{1}{6}$, $\frac{1}{3}$, $\frac{1}{2}$, $\frac{2}{3}$, $\frac{5}{6}$, $1$},
				xticklabels={$0$, $0$,$\frac{1}{6}$, $\frac{1}{3}$, $\frac{1}{2}$, $\frac{2}{3}$, $\frac{5}{6}$, $1$},
				]
				\addlegendimage{area legend,black,fill=princetonorange,postaction={pattern=horizontal lines}}
				\addlegendentry{Using \cref{eq:eq1} in Def. \ref{def:potOptRectN}}
				\addlegendimage{area legend,black,fill=brown,postaction={pattern=north west lines}}
				\addlegendentry{Using \cref{eq:eq2} in Def. \ref{def:potOptRectN}}
				\addlegendimage{area legend,black,fill=sienna,postaction={pattern=north east lines}}
				\addlegendentry{Using \cref{eq:eq3} in Def. \ref{def:potOptRectN}}
				\addlegendimage{area legend,black,fill=psychedelicpurple,postaction={pattern=vertical lines}}
				\addlegendentry{Using \cref{eq:eq4} in Def. \ref{def:potOptRectN}}
				\draw [princetonorange, thick, mark size=0.01pt,line width=0.01mm,postaction={pattern=horizontal lines},opacity=0.3] (axis cs:1/4, 0) rectangle (axis cs:1/2, 1/2);
				\draw [princetonorange, thick, mark size=0.01pt,line width=0.01mm,postaction={pattern=horizontal lines},opacity=0.4] (axis cs:1/4, 1/2) rectangle (axis cs:1/2, 3/4);
				
				\draw [brown, thick, mark size=0.01pt,line width=0.01mm,postaction={pattern=north west lines},opacity=0.3] (axis cs:1/4, 0) rectangle (axis cs:1/2, 1/2);
				\draw [brown, thick, mark size=0.01pt,line width=0.01mm,postaction={pattern=north west lines},opacity=0.4] (axis cs:1/2, 1/4) rectangle (axis cs:3/4, 1/2);
				
				\draw [sienna, thick, mark size=0.01pt,line width=0.01mm,postaction={pattern=north east lines},opacity=0.4] (axis cs:0, 3/4) rectangle (axis cs:1/4, 1);
				\draw [sienna, thick, mark size=0.01pt,line width=0.01mm,postaction={pattern=north east lines},opacity=0.4] (axis cs:5/8, 1/2) rectangle (axis cs:3/4, 5/8);
				\draw [sienna, thick, mark size=0.01pt,line width=0.01mm,postaction={pattern=north east lines},opacity=0.3] (axis cs:1/4, 0) rectangle (axis cs:1/2, 1/2);
				
				\draw [psychedelicpurple, thick, mark size=0.01pt,line width=0.01mm,postaction={pattern=vertical lines},opacity=0.4] (axis cs:3/4, 1/2) rectangle (axis cs:1, 3/4);
				\draw [psychedelicpurple, thick, mark size=0.01pt,line width=0.01mm,postaction={pattern=vertical lines},opacity=0.4] (axis cs:5/8, 1/2) rectangle (axis cs:3/4, 5/8);
				\draw [psychedelicpurple, thick, mark size=0.01pt,line width=0.01mm,postaction={pattern=vertical lines},opacity=0.3] (axis cs:1/4, 0) rectangle (axis cs:1/2, 1/2);
				
				\addplot[only marks,mark=*,mark size=2pt,black] coordinates {(1/2, 1/2)} node[yshift=-7pt, xshift=13pt, font=\scriptsize] { $50.05$}; 
				\addplot[only marks,mark=*,mark size=2pt,black] coordinates {(1/4, 1/2)} node[yshift=-7pt, xshift=14pt, font=\scriptsize] { $100.00$}; 
				\addplot[only marks,mark=*,mark size=2pt,black] coordinates {(3/4, 1/2)} node[yshift=-7pt, xshift=13pt, font=\scriptsize] { $0.10$};
				\addplot[only marks,mark=*,mark size=2pt,black] coordinates {(3/4, 1/4)} node[yshift=-7pt, font=\scriptsize] { $122.57$};
				\addplot[only marks,mark=*,mark size=2pt,black] coordinates {(3/4, 3/4)} node[yshift=-7pt, xshift=13pt, font=\scriptsize] { $122.57$};
				\addplot[only marks,mark=*,mark size=2pt,black] coordinates {(5/8, 3/4)} node[yshift=7pt, font=\scriptsize] { $119.97$};
				\addplot[only marks,mark=*,mark size=2pt,black] coordinates {(7/8, 3/4)} node[yshift=7pt, font=\scriptsize] { $120.02$};
				\addplot[only marks,mark=*,mark size=2pt,black] coordinates {(1/4, 1/4)} node[yshift=7pt, xshift=13pt, font=\scriptsize] { $158.11$}; 
				\addplot[only marks,mark=*,mark size=2pt,black] coordinates {(1/4, 3/4)} node[yshift=-7pt, xshift=13pt, font=\scriptsize] { $70.71$}; 
				\addplot[only marks,mark=*,mark size=2pt,black] coordinates {(1/8, 3/4)} node[yshift=7pt, font=\scriptsize] { $25.02$};
				\addplot[only marks,mark=*,mark size=2pt,black] coordinates {(3/8, 3/4)} node[yshift=7pt, font=\scriptsize] { $96.84$};
				\addplot[only marks,mark=*,mark size=2pt,blue] coordinates {(1/8, 5/8)} node[yshift=-7pt, font=\scriptsize] { $90.16$};
				\addplot[only marks,mark=*,mark size=2pt,blue] coordinates {(1/8, 7/8)} node[yshift=7pt, font=\scriptsize] { $82.94$};
				\addplot[only marks,mark=*,mark size=2pt,blue] coordinates {(3/4, 1/8)} node[yshift=-7pt, font=\scriptsize] { $150.10$};
				\addplot[only marks,mark=*,mark size=2pt,black] coordinates {(3/4, 3/8)} node[yshift=7pt, xshift=-13pt, font=\scriptsize] { $86.70$};
				\addplot[only marks,mark=*,mark size=2pt,blue] coordinates {(5/8, 3/8)} node[yshift=-7pt, font=\scriptsize] { $90.21$};
				\addplot[only marks,mark=*,mark size=2pt,black] coordinates {(7/8, 3/8)} node[yshift=7pt, xshift=13pt, font=\scriptsize] { $90.26$};
				\addplot[only marks,mark=*,mark size=2pt,blue] coordinates {(26/32, 3/8)} node[yshift=-7pt, font=\scriptsize] { $87.61$};
				\addplot[only marks,mark=*,mark size=2pt,blue] coordinates {(30/32, 3/8)} node[yshift=-7pt, font=\scriptsize] { $94.51$};
				\addplot[only marks,mark=*,mark size=2pt,black] coordinates {(5/8, 5/8)} node[yshift=7pt, xshift=-13pt, font=\scriptsize] { $82.99$};
				\addplot[only marks,mark=*,mark size=2pt,blue] coordinates {(5/8, 7/8)} node[yshift=7pt, font=\scriptsize] { $147.97$};
				\addplot[only marks,mark=*,mark size=2pt,blue] coordinates {(1/8, 1/4)} node[yshift=-7pt, font=\scriptsize] { $175.02$}; 
				\addplot[only marks,mark=*,mark size=2pt,blue] coordinates {(3/8, 1/4)} node[yshift=-7pt, font=\scriptsize] { $143.63$}; 
				\addplot[only marks,mark=*,mark size=2pt,blue] coordinates {(7/8, 5/8)} node[yshift=-7pt, font=\scriptsize] { $83.04$};
				\addplot[only marks,mark=*,mark size=2pt,blue] coordinates {(7/8, 7/8)} node[yshift=7pt, font=\scriptsize] { $148.02$};
				\addplot[only marks,mark=*,mark size=2pt,blue] coordinates {(18/32, 5/8)} node[yshift=-7pt, font=\scriptsize] { $78.12$};
				\addplot[only marks,mark=*,mark size=2pt,black] coordinates {(22/32, 5/8)} node[yshift=-7pt, font=\scriptsize] { $85.78$};
				\addplot[only marks,mark=*,mark size=2pt,blue] coordinates {(22/32, 18/32)} node[yshift=-7pt, font=\scriptsize] { $60.03$};
				\addplot[only marks,mark=*,mark size=2pt,blue] coordinates {(22/32, 22/32)} node[yshift=7pt, font=\scriptsize] { $105.41$};
				\addplot[only marks,mark=*,mark size=2pt,blue] coordinates {(3/8, 5/8)} node[yshift=-7pt, font=\scriptsize] { $43.32$};
				\addplot[only marks,mark=*,mark size=2pt,blue] coordinates {(3/8, 7/8)} node[yshift=7pt, font=\scriptsize] { $129.92$};
				\addplot[patch,mesh,draw,black,patch type=rectangle] coordinates {
					(0, 0) (1, 0) (1, 1) (0, 1)
					(0, 1/2) (1, 1/2) (1, 0) (0, 0)
					(1/2, 0) (1/2, 1) (1, 1) (1, 0)
					(1/4, 0) (1/4, 1) (0, 1) (0, 0)
					(1/4, 3/4) (1/4, 1) (0, 1) (0, 3/4)
					(1/4, 3/4) (1/4, 1) (1/2, 1) (1/2, 3/4)
					(3/4, 1/4) (3/4, 3/4) (1/2, 3/4) (1/2, 1/4)
					(5/8, 1/2) (5/8, 3/4) (3/4, 3/4) (3/4, 1/2)
					(5/8, 1/2) (5/8, 5/8) (3/4, 5/8) (3/4, 1/2)
					(1, 1/4) (1, 3/4) (1/2, 3/4) (1/2, 1/4)
					(3/4, 3/4) (3/4, 1) (1/2, 1) (1/2, 3/4)
					(7/8, 1/4) (7/8, 1/2) (1, 1/2) (1, 1/4)};
			\end{axis}
		\end{tikzpicture}
		\begin{tikzpicture}
			\begin{groupplot}[
				group style={
					group size=2 by 2,
					x descriptions at=edge bottom,
					vertical sep=0pt,
					horizontal sep=0pt,
				},
				height=0.535\textwidth,width=0.535\textwidth,
				]
				\nextgroupplot[
				ymin=0,ymax=200,
				xmin=0,xmax=0.6,
				xtick distance=0.15,
				ytick distance=50,
				enlargelimits=0.049,
				ylabel = {$f(\textcolor{blue}{\mathbf{c}}^i)$},
				ylabel style={yshift=-0.1cm},
				]
				\addplot[only marks,mark=*,mark size=2.5pt,black!50] coordinates {(-0.1,-0.1)} ;
				\label{p123}
				\addplot[mark=*,mark size=2.5pt,princetonorange] coordinates {(-0.1,-0.1)} ;
				\label{p321}
				\addplot[only marks,mark=*,mark size=2.5pt,black!50] table[x=T,y=Xa] {data/POH.txt};
				\addplot[mark=*,mark size=2.5pt,princetonorange] coordinates {(0.5590, 143.6391) (0.3536, 43.3263)};
				\node [draw,fill=white] at (rel axis cs: 0.5, 0.875) {\shortstack[l]{
						{\scriptsize {Using} \cref{eq:eq1} {in Def.} \ref{def:potOptRectN}}: \\
						\ref{p123} {\scriptsize non-POH} \ref{p321} {\scriptsize POH} }};
				\node [fill=white] at (rel axis cs: 0.1, 0.1) {\shortstack[l]{ (a) }};
				\nextgroupplot[
				ymin=0,ymax=200,
				xmin=0,xmax=0.6,
				xtick distance=0.15,
				ytick distance=50,
				ylabel = {$\displaystyle\min_{l \in \mathbb{H}^i_k} f(\mathbf{c}^l)$},
				ylabel near ticks, yticklabel pos=right,
				enlargelimits=0.049,
				]
				\addplot[mark=*,mark size=2.5pt,brown] coordinates {(-0.1,-0.1)} ;
				\label{p322}
				\addplot[only marks,mark=*,mark size=2.5pt,black!50] table[x=T,y=Xb] {data/POH.txt};
				\addplot[mark=*,mark size=2.5pt,brown] coordinates {(0.5590, 50.0500) (0.3536, 0.1000)};
				\node [draw,fill=white] at (rel axis cs: 0.5, 0.875) {\shortstack[l]{
						{\scriptsize {Using} \cref{eq:eq2} {in Def.} \ref{def:potOptRectN}}: \\
						\ref{p123} {\scriptsize non-POH} \ref{p322} {\scriptsize POH} }};
				\node [fill=white] at (rel axis cs: 0.1, 0.1) {\shortstack[l]{ (b) }};
				\nextgroupplot[
				ymin=0,ymax=200,
				xmin=0,xmax=0.6,
				xtick distance=0.15,
				ytick distance=50,
				enlargelimits=0.049,
				xlabel = {$\delta^i_k$},
				ylabel = {$\dfrac{1}{\textrm{card}( \mathbb{H}^i_k )} \displaystyle\sum_{l=1}^{\textrm{card}( \mathbb{H}^i_k )} f(\mathbf{c}^l)$},
				ylabel style={yshift=0.1cm},
				xlabel style={yshift=0.1cm},
				]
				\addplot[mark=*,mark size=2.5pt,sienna] coordinates {(-0.1,-0.1)} ;
				\label{p323}
				\addplot[only marks,mark=*,mark size=2.5pt,black!50] table[x=T,y=Xc] {data/POH.txt};
				\addplot[mark=*,mark size=2.5pt,sienna] coordinates {(0.5590, 112.9507) (0.3536, 59.5588) (0.1768, 57.2273)};
				\node [draw,fill=white] at (rel axis cs: 0.5, 0.875) {\shortstack[l]{
						{\scriptsize {Using} \cref{eq:eq3} {in Def.} \ref{def:potOptRectN}}: \\
						\ref{p123} {\scriptsize non-POH} \ref{p323} {\scriptsize POH} }};
				\node [fill=white] at (rel axis cs: 0.1, 0.1) {\shortstack[l]{ (c) }};
				\nextgroupplot[
				ymin=0,ymax=200,
				xmin=0,xmax=0.6,
				xtick distance=0.15,
				ytick distance=50,
				enlargelimits=0.049,
				xlabel = {$\delta^i_k$},
				ylabel = {$\frac{1}{2}\left( \displaystyle\min_{l \in \mathbb{H}^i_k} f(\mathbf{c}^l) +  f(\textcolor{blue}{\mathbf{c}^i}) \right)$},
				ylabel near ticks, yticklabel pos=right,
				xlabel style={yshift=0.1cm},
				]
				\addplot[mark=*,mark size=2.5pt,psychedelicpurple] coordinates {(-0.1,-0.1)} ;
				\label{p333}
				\addplot[only marks,mark=*,mark size=2.5pt,black!50] table[x=T,y=Xd] {data/POH.txt};
				\addplot[mark=*,mark size=2.5pt,psychedelicpurple] coordinates {(0.5590, 96.8445) (0.3536, 41.5703) (0.1768, 30.0677)};
				\node [draw,fill=white] at (rel axis cs: 0.5, 0.875) {\shortstack[l]{
						{\scriptsize {Using} \cref{eq:eq4} {in Def.} \ref{def:potOptRectN}}: \\
						\ref{p123} {\scriptsize non-POH} \ref{p333} {\scriptsize POH} }};
				\node [fill=white] at (rel axis cs: 0.1, 0.1) {\shortstack[l]{ (d) }};
			\end{groupplot}
			\node (title) at ($(group c1r1.center)!0.5!(group c2r1.center)+(0,2.75cm)$) {\large Selection of POH using IO on different models};
	\end{tikzpicture}}
	\caption{Two-dimensional illustration (in the seventh iteration of \halrect{} on \textit{Bukin6} test problem) of four different POH selection scheme variations (see \Cref{def:potOptRectN}) implemented in the \halrect{} algorithm and controlled by \cref{eq:eq1,eq:eq2,eq:eq3,eq:eq4}.}
	\label{fig:divides}
\end{figure}

\subsubsection{Minimum value based selection}
\label{ssec:minimum-value-selection}
The second selection scheme in \halrect{} is motivated by the  \birect{} algorithm~\cite{Paulavicius2016:jogo}.
Instead of objective function evaluation at midpoints, the sampling and evaluation on the diagonal points equidistant between themselves and the endpoints of a diagonal are used.
Then, in the selection of POHs, the minimum of these two points is used.
In the \halrect{} case, the best (minimum) function value is used at all the points sampled in the hyper-rectangle $(\bar{D}^i_k)$.
It is obtained by using \cref{eq:eq2} in \Cref{def:potOptRectN}.

As more sampling points are used in the lower Lipschitz bound calculation, more information about the objective function is exploited for POH identification, likely to result in faster convergence.
Therefore, on the vertical $y$-axis, instead of function values obtained at the current midpoints, the minimum values attained at all sampled points over a hyper-rectangle $(\min_{j \in \mathbb{H}^i_k} f(\mathbf{c}^j))$ are used (see part (b) on the right side of \Cref{fig:divides}).

\begin{corollary}\label{coroll2}
	For each hyper-rectangle $ \bar{D}^i_k $ the following condition holds
	\begin{equation}\label{eq:coroll2}
		\min_{j \in \mathbb{H}^i_k} f(\mathbf{c}^j) \le f(\textcolor{blue}{\mathbf{c}^i})
	\end{equation}
\end{corollary}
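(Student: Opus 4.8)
The plan is to reduce the inequality to the single structural fact that the midpoint $\mathbf{c}^i$ of any hyper-rectangle $\bar{D}^i_k$ is itself one of the points whose index is recorded in its representative sampling index set $\mathbb{H}^i_k$, i.e. $i \in \mathbb{H}^i_k$. Once this is known, the corollary is immediate: $\{\mathbf{c}^j : j \in \mathbb{H}^i_k\}$ is a finite set of sampled points containing $\mathbf{c}^i$, so the minimum of $f$ over it cannot exceed the value $f(\mathbf{c}^i)$ attained at one of its members, which is exactly \eqref{eq:coroll2}.

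The only thing that needs a little care is verifying $i \in \mathbb{H}^i_k$ for every $i \in \indexsett_k$ and every iteration $k$, which I would do by induction following the binary tree of \Cref{fig:flowchart_halrect}. In the base case $k = 0$ there is a single hyper-rectangle $\bar{D}^1_0$ with midpoint $\mathbf{c}^1$ and $\mathbb{H}^1_0 = \{1\}$, so the property holds. For the inductive step, assume $\bar{D}^i_k$ has midpoint $\mathbf{c}^i$ with $i \in \mathbb{H}^i_k$. Any hyper-rectangle that is not subdivided simply carries its representative set unchanged into $\mathcal{P}_{k+1}$, so the invariant persists. If $\bar{D}^i_k$ is selected and bisected along the branching coordinate $br$, the two descendants acquire new midpoints $\mathbf{c}^{\rm left}$ and $\mathbf{c}^{\rm right}$ given by \cref{eq:leftpoint,eq:rightpoint}, and the construction rules \cref{eq:leftrep,eq:rightrep} explicitly append the indices ${\rm left}$ and ${\rm right}$ to $\mathbb{H}^{\rm left}_k$ and $\mathbb{H}^{\rm right}_k$ respectively. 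Hence each descendant again contains the index of its own midpoint, and the induction closes.

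Combining the two observations yields the corollary. There is essentially no obstacle here; the proof is short, and the only point worth making explicit is that the bookkeeping in \cref{eq:leftrep,eq:rightrep} never discards the freshly created midpoint index — this is guaranteed by the ``$\cup\,\{{\rm left}\}$'' and ``$\cup\,\{{\rm right}\}$'' terms — so that the invariant ``the midpoint of a hyper-rectangle belongs to its own representative sampling index set'' is preserved throughout the run of \halrect{}.
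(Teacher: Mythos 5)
Your proof is correct and is essentially the paper's argument: the paper disposes of the corollary in one line by noting it follows directly from the definition of $\mathbb{H}^i_k$ (which explicitly includes the midpoint $\mathbf{c}^i$ among the sampled points), which is exactly your key fact $i \in \mathbb{H}^i_k$. Your induction over the construction rules \cref{eq:leftrep,eq:rightrep} merely makes explicit the bookkeeping that the paper takes for granted, so it is the same approach with added detail rather than a different route.
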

\begin{proof}
	It follows directly from the definition of $\mathbb{H}^i_k$ (see \Cref{def:potOptRectN}).
\end{proof}


\subsubsection{Mean value based selection scheme}
\label{ssec:mean-value-selection}
The third selection scheme implemented in \halrect{} is motivated by the mean value obtained at diagonal sampling points and proposed in~\cite{Sergeyev2006}.
In the \halrect{} case, the mean function value is calculated from all sampled points on the hyper-rectangle $(\bar{D}^i_k)$.
It is obtained by using \cref{eq:eq3} in \Cref{def:potOptRectN}.
Using this selection scheme, on the vertical $y$-axis, the mean values calculated from objective function values attained at all sampled points over a hyper-rectangle are used (see part (c) on the right side of \Cref{fig:divides}).

\subsubsection{Midpoint and minimum values based selection scheme}
\label{ssec:midpoint-minimum-value-selection}
The final selection scheme (see \cref{eq:eq4}) implemented in \halrect{} combines ideas used in \cref{eq:eq1} and \cref{eq:eq3} and takes the mean of these two values.
On the vertical $y$-axis, the mean values calculated for each hyper-rectangle using two values: i) the midpoint value $f(\textcolor{blue}{\mathbf{c}^i})$, and ii) the minimum value $\min_{j \in \mathbb{H}^i_k} f(\mathbf{c}^j)$ are used (see part (d) on the right side of \Cref{fig:divides}).

The impact of these four selection schemes on the performance of \halrect{} is explored in \Cref{sec:model}.

\subsubsection{Reducing the set of selected POHs}
\label{ssec:reducing-POH}
It was stated in~\Cref{sec:selection-schemes} that sometimes, e.g., using \Cref{def:potOptRect} on symmetric problems, there might exist many POHs with the same measure $\delta_k$ and objective function value, leading to a significant increase of selected ``equivalent'' POHs per iteration.
This situation can also arise in \halrect, mainly when \cref{eq:eq2} is used.
Then a good objective function value attained at the vertex can be shared up to $2^n$ hyper-rectangles.

Many authors (see, e.g.,~\cite{Jones2001,Jones2021,Gablonsky2001,Stripinis2021b,Stripinis2021c}) observed that selecting only one from many ``equivalent'' candidates can significantly increase the performance of  \direct-type algorithms.
Some authors (see, e.g.,~\cite{Baker2000,Gablonsky2001,Jones2001}) did not specify how the only candidate should be selected, while in \cite{Stripinis2018a,Stripinis2021c}, the authors selected a hyper-rectangle with the largest index value among them.
In the \halrect{} algorithm, as more sampling points per hyper-rectangle are available, we use a unique strategy to select ``the most promising candidate''.
Specifically, we sort in ascending order the objective function values attained at the points belonging to the hyper-rectangle.
Then, if there are two hyper-rectangles of the same size with the same minimum value, we compare the second smallest values and choose the hyper-rectangle with the smaller value.
If the second smallest values are equal, we compare the subsequent ones.

\subsection{Algorithmic steps}
\label{sec:directhalrect}

The complete description of the \halrect{} algorithm is shown in Algorithm~\ref{alg:halrect}.
The inputs for the algorithm are the problem ($f$), optimization domain ($D$), and one (or few) stopping criteria: required tolerance ($\varepsilon_{\rm pe}$), the maximal number of function evaluations ($\texttt{M}_{\rm max}$), and the maximal number of iterations ($\texttt{K}_{\rm max}$).
After termination, \halrect{} returns the value of the objective function found $(f^{\min})$ and the solution point $(\mathbf{x}^{\min})$ together with algorithmic performance measures: final tolerance~-- percent error~$(pe)$, the number of function evaluations~$(m)$, and the number of iterations~$(k)$.

\begin{algorithm}
	\caption{The \halrect{} algorithm}\label{alg:halrect}
	\begin{algorithmic}[1]
		\State \halrect($f$,$D$,opt);
		\Require {Problem $f$, search domain $D$, and adjustable algorithmic parameters $opt$: tolerance ($\varepsilon_{\rm pe}$), the maximal number of function evaluations ($\texttt{M}_{\rm max}$) and the maximal number of iterations ($\texttt{K}_{\rm max}$);}
		\Ensure {The best objective function value $f^{\min}$, minimum point $\mathbf{c}^{\min}$, and algorithmic performance measures $pe$, $k$, $m$;}
		\algrule
		\State Normalize the search domain $D$ to be the unit hyper-rectangle $\bar{D}$; \label{alg:initialization_begins}
		\State Initialize: $\mathbf{c}^1 = (\frac{1}{2},\dots,\frac{1}{2})$, $k=1$, $m=1$ and $pe$;  \Comment{\textit{pe} defined in \cref{eq:pe}}
		\State Evaluate $f^1 = f(\mathbf{c}^1)$, and set $f^{\min} = f^1$, $\mathbf{c}^{\min} = \mathbf{c}^1$, $\mathbb{H}^1_1 = \{ 1\}$;\label{alg:initialization_ends}
		\While{$pe > \varepsilon_{\rm pe}$ \textbf{and} $m < \texttt{M}_{\rm max}$ \textbf{and} $k < \texttt{K}_{\rm max}$ } \label{alg:iterbegin}
		\State Identify the set $S_k \subseteq \mathcal{P}_k$ of POHs applying \cref{def:potOptRectN}; \label{alg:selection_begins}
		\ForEach{$\bar{D}^j_k \in {S}_k$}
		\State Find the branching coordinate index $(br)$ using \cref{alg:branching-coordinate};
		\State Bisect $\bar{D}^j_k$ into two new  hyper-rectangles $\bar{D}^{m+1}_k$ and $\bar{D}^{m+2}_k$; \label{alg:subdivisionbegin}
		\State Create new midpoints $\mathbf{c}^{m+1}$ and $\mathbf{c}^{m+2}$; \Comment{see~\cref{eq:leftpoint,eq:rightpoint}}\label{alg:samplingend}
		\State Construct $\mathbb{H}^{m+1}, \mathbb{H}^{m+2}$; \Comment{see \cref{eq:leftrep,eq:rightrep}}
		\State Update the partition set: $\mathcal{P}_k = \mathcal{P}_k \setminus \bar{D}^j_k \cup \bar{D}^{m+1}_k \cup \bar{D}^{m+2}_k$;\label{alg:subdivisionend}
		\If{$f(\mathbf{c}^{m+1}) \leq f^{\min}$ \textbf{or} $f(\mathbf{c}^{m+2}) \leq f^{\min}$ } \label{alg:dimen_begin}
		\State Update $f^{\min}, \mathbf{c}^{\min}$;
		\EndIf
		\State Update performance measures: $k$, $m$ and $pe$;
		\EndFor
		\EndWhile \label{alg:iterend} \\
		\textbf{Return} $f^{\min}, \mathbf{c}^{\min}$, and algorithmic performance measures: $k$, $m$ and $pe$.
	\end{algorithmic}
\end{algorithm}

Like almost all \direct-type algorithms, \halrect{} performs initialization: normalization of the feasible region, initial evaluation of the objective function at the midpoint, setting initial values for performance measures, and specifying stopping conditions (see Algorithm~\ref{alg:halrect}, lines \ref{alg:initialization_begins}--\ref{alg:initialization_ends}).
The main \textit{while} loop (see Algorithm~\ref{alg:halrect}, lines \ref{alg:iterbegin}--\ref{alg:iterend}) is executed until any specified stopping condition is satisfied.
At the beginning of each iteration, the \halrect{} algorithm identifies the set of POHs (see Algorithm~\ref{alg:halrect}, line \ref{alg:selection_begins}).
As noted in the previous section, the \halrect{} algorithm uses four different approaches controlled by \cref{eq:eq1,eq:eq2,eq:eq3,eq:eq4}.
Then, the \halrect{} algorithm bisects all POHs, samples at new midpoints of created hyper-rectangles and updates performance measures.
In the end, the solution is found, and the performance measures are returned.

\subsection{Convergence properties of the \halrect{} algorithm}\label{sec:converg}

The convergence properties of \direct-type algorithms are broadly reviewed and investigated~(see, e.g., \cite{Finkel2006,Jones1993,Paulavicius2016:jogo,Paulavicius2014:jogo,Sergeyev2006}).
Typically, they belong to the class of ``divide the best'' methods and have the ``everywhere-dense'' type of convergence, that is, convergence to each point of the feasible region.
The continuity of the objective function (at least in the neighborhood of global minima) is the only assumption required to ensure convergence.

The convergence of \halrect{} follows from a logic similar to that of other \direct-type algorithms.
Let us state it formally in \Cref{theorem}, when the maximal allowed number of generated trial points, or the maximal number of function evaluations, $\texttt{M}_{\rm max} \rightarrow \infty$.


\begin{theorem}
	\label{theorem}
	For any global minima $\mathbf{x}^* \in \bar{D}$ and any $\epsilon > 0$ there exists an iteration number $k_{\epsilon} \ge 1$ and a sampling point $ \mathbf{c}^j \in \bar{D}^{i^{*}}_k \subseteq \bar{D}$, such that
	\begin{equation}
		\label{theorem_eq}
		\max_{j \in \mathbb{H}^{i^{*}}_k}{ \{ \| \mathbf{c}^j - \mathbf{x}^* \| \} } \leq \epsilon.
	\end{equation}
\end{theorem}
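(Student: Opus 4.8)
The plan is to reduce the claim to the single geometric fact that the hyper-rectangle containing $\mathbf{x}^*$ is eventually made arbitrarily small, and then to read off \eqref{theorem_eq} from the elementary observation that every point indexed by $\mathbb{H}^i_k$ lies inside $\bar{D}^i_k$. First I would record that, by the way $\mathbb{H}^{\rm left}_k$ and $\mathbb{H}^{\rm right}_k$ are built in \eqref{eq:leftrep}--\eqref{eq:rightrep} (and as already noted in the proof of \Cref{coroll1}), every $\mathbf{c}^j$ with $j\in\mathbb{H}^i_k$ belongs to $\bar{D}^i_k$. Hence, if $\mathbf{x}^*\in\bar{D}^i_k$, then for all $j\in\mathbb{H}^i_k$ we have $\|\mathbf{c}^j-\mathbf{x}^*\|\le\operatorname{diam}(\bar{D}^i_k)=\|\mathbf{b}^i_k-\mathbf{a}^i_k\|=\delta^i_k$ by \eqref{eq:distanceN}. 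So it suffices to produce, for each $\epsilon>0$, an iteration $k_\epsilon$ and an index $i^*\in\indexsett_{k_\epsilon}$ with $\mathbf{x}^*\in\bar{D}^{i^*}_{k_\epsilon}$ and $\delta^{i^*}_{k_\epsilon}\le\epsilon$.

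Second, I would show that a hyper-rectangle of the largest current measure is always selected as a POH. Given $\mathcal{P}_k$, let $h$ maximise $\delta^i_k$ and, among all such indices, minimise $\mathcal{F}^i_k$ (this works for any of \eqref{eq:eq1}--\eqref{eq:eq4}, since only $\mathcal{F}^i_k\in\mathbb{R}$ is used). For $\tilde L$ large enough, \eqref{eq:potOptRect1N} holds because $\delta^i_k-\delta^h_k\le 0$ for every $i$, with the equality case covered by the minimising choice of $h$; and \eqref{eq:potOptRect2N} holds because $\delta^h_k>0$ makes $\mathcal{F}^h_k-\tilde L\delta^h_k\to-\infty$. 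Thus $\bar{D}^h_k$ is potentially optimal, and this particular maximal-measure candidate is also retained by the tie-breaking rule of \Cref{ssec:reducing-POH} (it is either kept as the unique representative of its group or kept because it is non-dominated), so at least one hyper-rectangle of measure $\delta^{\max}_k:=\max_{i\in\indexsett_k}\delta^i_k$ is bisected in iteration $k$.

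Third, I would argue $\delta^{\max}_k\to 0$ as $k\to\infty$ (the algorithm runs indefinitely, since $\texttt{M}_{\max}\to\infty$). The sequence is non-increasing: bisecting a longest side strictly shrinks the Euclidean diameter, so no hyper-rectangle created in iteration $k$ has measure exceeding $\delta^{\max}_k$, while the untouched ones keep their measure. Since only finitely many hyper-rectangles of measure exactly $\delta^{\max}_k$ exist, none of that measure is ever created, and at least one is removed per iteration, $\delta^{\max}_k$ strictly decreases after finitely many iterations. Finally, because \halrect{} always bisects a longest side, the per-coordinate bisection counts of any hyper-rectangle differ by at most one, so every achievable measure lies in $\{\sqrt{\textstyle\sum_j 4^{-m_j}} : m_j\in\mathbb{Z}_{\ge 0},\ \max_j m_j-\min_j m_j\le 1\}$, a set meeting every $[\delta_1,\infty)$, $\delta_1>0$, in finitely many points; a non-increasing sequence in such a set that strictly decreases infinitely often must tend to $0$.

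Combining the pieces: at each $k$ choose $i^*_k$ with $\mathbf{x}^*\in\bar{D}^{i^*_k}_k$ (at least one exists, picking either side if $\mathbf{x}^*$ lies on a shared facet); then $\delta^{i^*_k}_k\le\delta^{\max}_k\to 0$, so some $k_\epsilon$ satisfies $\delta^{i^*_{k_\epsilon}}_{k_\epsilon}\le\epsilon$, and the first step yields $\max_{j\in\mathbb{H}^{i^*_{k_\epsilon}}_{k_\epsilon}}\|\mathbf{c}^j-\mathbf{x}^*\|\le\delta^{i^*_{k_\epsilon}}_{k_\epsilon}\le\epsilon$, which is \eqref{theorem_eq}. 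The only genuinely delicate point is the second step — checking that a maximal-measure hyper-rectangle is always potentially optimal and, in particular, is not discarded by the POH-reduction rule of \Cref{ssec:reducing-POH}; everything else is the usual ``divide-the-best'' bookkeeping, and the balanced-bisection fact invoked in the third step is the one routine computation I would write out in full.
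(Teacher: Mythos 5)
Your proof is correct and follows essentially the same route as the paper's own argument: a hyper-rectangle of maximal measure is always selected and bisected, hence $\delta^{\max}_k \to 0$, hence the hyper-rectangle containing $\mathbf{x}^*$ eventually has measure at most $\epsilon$, and since every point indexed by $\mathbb{H}^{i^*}_k$ lies in that hyper-rectangle, \eqref{theorem_eq} follows. The only difference is that you make explicit two steps the paper merely asserts — the large-$\tilde{L}$ verification that a maximal-measure candidate satisfies \Cref{def:potOptRectN} (including survival of the tie-breaking reduction) and the discreteness of attainable measures under balanced bisection forcing the maximal measure below any threshold — which is added rigor rather than a different approach.
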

\begin{proof}
	In the selection scheme developed in \halrect {} (see \Cref{def:potOptRectN}), every iteration $(k)$ always selects at least one hyper-rectangle $\bar{D}^{\max}_k \in {S}_k \subseteq \mathcal{P}_k$ from the group of hyper-rectangles with the most extensive measure $\delta_k^{\rm max}$ (see the right panel of \Cref{fig:divides})
	\begin{equation}
		\label{theorem_eqau}
		\delta_k^{\rm max} = \max_{i \in \indexsett_k}{\{\delta^i_k  \}}.
	\end{equation}
	From \cref{theorem_eqau} follows, the hyper-rectangle $\bar{D}^{\max}_k$ with the largest measure $\delta_k^{\rm max}$ will be bisected through the longest coordinate (see \Cref{partitioning}) in each \halrect{} iteration.
	Since each group $\delta_k$ of distinct measures contains only a finite number of hyper-rectangles, all hyper-rectangles of the group $\delta_k^{\rm max}$ will be partitioned after a sufficient number of iterations. 
	
	The procedure will be repeated with a new group of the largest hyper-rectangles.
	As a result, after the finite number of iterations, the current partition $\mathcal{P}_k, k \ge k_{\epsilon}$ will have only hyper-rectangles measured $\delta_k^{\rm max} \le \epsilon$, i.e.,
	
	\begin{equation}
		\label{theorem_eqaus}
		\| \mathbf{b}_k^{i^{\max}} - \mathbf{a}_k^{i^{\max}} \| \leq \epsilon.
	\end{equation}
	From \cref{theorem_eqaus} follows, the measure $\delta_k^{i^{*}}$ of the hyper-rectangle containing the global minimum $\mathbf{x}^* \in \bar{D}_k^{i^{*}}$ also does not exceed $\epsilon$
	\begin{equation}
		\label{theorem_eqs}
		\| \mathbf{b}_k^{i^{*}} - \mathbf{a}_k^{i^{*}} \| \leq \epsilon.
	\end{equation}
	Moreover, it is clear, that 
	\begin{equation}
		\label{theorem_eqs2}
		\max_{j \in \mathbb{H}^{i^{*}}_k}{ \{ \| \mathbf{c}^j - \mathbf{x}^* \| \} } \leq \delta_k^{i^{*}}.
	\end{equation}
	Thus, from \cref{theorem_eqs,theorem_eqs2} follows \cref{theorem_eq}.
\end{proof}

\section{Experimental results}
\label{sec:results}

This section describes the numerical experiments conducted to evaluate the performance of the newly introduced \halrect{} algorithm and all its modifications by comparing them with other well-known and relevant \direct-type approaches.
In total, we examine twelve variations of \halrect{}.
We compared them with twelve recently introduced \direct-type algorithms~\cite{Stripinis2021b} available in the most recent version of \directgo{} \cite{DIRECTGOv1.1.0} using $96$ box-constrained global optimization test problems and their perturbed versions from \directlib~\cite{DIRECTGOLibv1.1,DIRECTGOLibv11}  (listed in \Cref{tab:test} in Appendix \ref{apendixas}).

In our recent study~\cite{Stripinis2021b}, we stress that the optimization domains $(D)$ for certain test problems were redesigned to eliminate the dominance of particular partitioning schemes.
The exact modified domains are also considered in this paper.
Note that different subsets (e.g., low dimensional problems $(n \le 4)$, non-convex problems, etc.) of the entire set were used to deepen the investigation.
All the problems and algorithms used in this section are implemented in the Matlab R2022a environment.
All computations were performed using an Intel R Core$^\textit{TM}$ i5-10400 @ 2.90GHz processor and $16$ GB RAM.
All algorithms were tested using a limit of M$_{\rm max} = 10^6$ function evaluations in each run.
For the $96$ analytical test cases with a priori known global optima $ f^* $, one of the used stopping criteria is based on the percent error:
\begin{equation}
	\label{eq:pe}
	\ pe = 100 \times
	\begin{cases}
		\frac{f({\mathbf{x}}) - f^*}{\mid f^* \mid},  & f^* \neq 0, \\
		f({\mathbf{x}}),  & f^* = 0,
	\end{cases}
\end{equation}
where $ f^* $ is the known global optimum.
Thus if not specified differently, tested algorithms were stopped when the percent error became smaller than the prescribed value equal to $\varepsilon_{\rm pe} = 10^{-2}$ or when the number of function evaluations exceeded the prescribed limit of $10^6$.

Testing results shown in this article are also available in digital form in the \texttt{Results/JOGO2} directory of the GitHub repository~\cite{DIRECTGOv1.1.0}.
The \texttt{Scripts/JOGO2} directory of the same GitHub repository~\cite{DIRECTGOv1.1.0} provides the \texttt{MATLAB} script for cycling through all \directlib{} test problems used in this article.
The script can reproduce the results presented here.
In addition, they can be used to compare and evaluate newly developed algorithms.

\subsection{Comparison of different selection strategies in \halrect}
\label{sec:model}
In this section, the impact and comparison of three different selection schemes: Lipschitz-based (using \Cref{def:potOptRectN}), improved aggressive (using \Cref{def:potOptRectAggr}), and two-step based Pareto (using \Cref{def:potOptRectPareto}), and four different strategies to obtain an aggregated objective function information over hyper-rectangles (controlled by \cref{eq:eq1,eq:eq2,eq:eq3,eq:eq4}) in the performance of \halrect{} is investigated.
In total, twelve different variations of \halrect{} are compared.

The results obtained on the entire set of $96$ \directlib{} test problems are summarized in \Cref{tab:results}.
The best results are highlighted in bold.
In the upper part of this table, the performance of \halrect{} is given using four strategies to obtain the aggregated information about the objective function ($\mathcal{F}^i_k$).
As can be seen, there is no single superior strategy.
The best average results are obtained with the first strategy based on a single midpoint value (see~\cref{eq:eq1}).
However, the overall lowest number of unsolved problems (7/96) was obtained with the second strategy.
It is based on the minimum value attained at all sampled points belonging to a certain hyper-rectangle (see~\cref{eq:eq2}).
Moreover, it performed significantly better on average than the other strategies on low-dimensional $(n \le 4)$ problems.
It can also be seen that the third strategy, based on the mean value (see~\cref{eq:eq3}), was the worst for practically all summarized  cases.
The best median results were obtained with the fourth strategy (see~\cref{eq:eq4}), which combines all three strategies, using the arithmetic mean of the estimates used in the first two strategies.

Our recent work~\cite{Stripinis2021b} showed that combining existing partition and selection schemes into \direct-type algorithms could lead to more efficient ones.
Motivated by this, we have created two different \halrect{} algorithmic versions, \halrect\texttt{-IA} and \halrect\texttt{-GL}, where the original partition strategy is used, but the selection scheme is changed.
Specifically, in \halrect\texttt{-IA}, the original Lipschitz lower bounds-based selection scheme (\Cref{def:potOptRectN}) is replaced with the \textit{improved aggressive selection} (\Cref{ssec:ImprovedAggressiveSelection}) using newly introduced \cref{eq:eq1,eq:eq2,eq:eq3,eq:eq4} for the information about the objective function.
Similarly, in \halrect\texttt{-GL}, the original \halrect{} selection scheme is replaced with a \textit{two-step-based (Global-Local) Pareto selection} (\Cref{ssec:ImprovedAggressiveSelection}).
Consequently, the results obtained on the same testbed are summarized in the middle and bottom parts of \Cref{tab:results}.
Comparing the influence of \cref{eq:eq1,eq:eq2,eq:eq3,eq:eq4} on the performance of three different versions of \halrect{}, we observe that for both \halrect\texttt{-IA} and \halrect\texttt{-GL}, the best results for practically all cases are obtained when \cref{eq:eq4} is used.
However, in the case of \halrect\texttt{-IA} and \halrect\texttt{-GL}, we no longer observe that \cref{eq:eq3} is always the worst, as was the case for \halrect.
Comparing \halrect, \halrect\texttt{-IA}, and \halrect\texttt{-GL}, we observe that the lowest number of unsolved problems ($2/96$) is obtained using \halrect\texttt{-GL}.
It was the best for almost all criteria, except for the median value, where \halrect{} with \cref{eq:eq4} performed the best.

\begin{table}[ht]
	\caption{Comparison of \halrect{} versions based on three different selection schemes: Lipschitz-based (used in \halrect), improved aggressive (used in \halrect\texttt{-IA}), and two-step based Pareto (used in \halrect\texttt{-GL}) and four different strategies to obtain an aggregated objective function information (controlled by \cref{eq:eq1,eq:eq2,eq:eq3,eq:eq4}). The performance measured as the number of function evaluations. The best results are marked in bold.}
	\footnotesize
	\resizebox{\textwidth}{!}{
		\begin{tabular}[tb]{@{\extracolsep{\fill}}clcrrrr}
			\toprule
			Alg. & Criteria & $\#$ of cases & \cref{eq:eq1} & \cref{eq:eq2} & \cref{eq:eq3} & \cref{eq:eq4}  \\
			\midrule
			\multirow{9}{*}{\rotatebox{90}{\halrect}} & $\#$ of failed problems	& $96$ & $8$ & $\mathbf{7}$ & $15$ & $12$ \\
			& Median results & $96$ & $1,419$ & $2,119$ & $3,581$ & $\mathbf{976}$ \\
			& Average results & $96$ & $\mathbf{127,562}$ & $143,909$ & $216,933$ & $142,403$ \\
			& Average ($n \leq 4$) & $51$ & $30,792$ & $\mathbf{7,248}$ & $31,456$ & $48,456$ \\
			& Average ($n > 4$) & $45$ & $\mathbf{237,918}$ & $298,952$ & $427,839$ & $249,953$ \\
			& Average (convex) & $30$ & $93,018$ & $167,616$ & $236,137$ & $\mathbf{83,835}$ \\
			& Average (non-convex) & $66$ & $143,263$ & $\mathbf{133,133}$ & $208,204$ & $169,024$ \\
			& Average (uni-modal) & $15$ & $62,774$ & $159,110$ & $221,099$ & $\mathbf{60,046}$ \\
			& Average (multi-modal) & $81$ & $142,513$ & $\mathbf{140,401}$ & $215,972$ & $161,408$ \\
			\midrule
			\multirow{9}{*}{\rotatebox{90}{\halrect\texttt{-IA}}} & $\#$ of failed problems & $96$ & $9$ & $9$ & $15$ & $\mathbf{5}$ \\
			& Median results & $96$ & $1,826$ & $1,880$ & $2,737$ & $\mathbf{1,581}$ \\
			& Average results & $96$ & $114,222$ & $124,552$ & $194,832$ & $\mathbf{62,874}$ \\
			& Average ($n \leq 4$) & $51$ & $43,203$  & $10,634$ & $13,223$ & $\mathbf{3,762}$ \\
			& Average ($n > 4$) & $45$ & $195,668$ & $253,895$ & $400,948$ & $\mathbf{129,952}$ \\
			& Average (convex) & $30$ & $105,963$ & $139,058$ & $234,394$ & $\mathbf{51,166}$ \\
			& Average (non-convex) & $66$ & $117,975$ & $117,958$ & $176,849$ & $\mathbf{68,197}$ \\
			& Average (uni-modal) & $15$ & $65,698$ & $127,693$ & $250,054$ & $\mathbf{62,332}$ \\
			& Average (multi-modal) & $81$ & $125,419$ & $123,827$ & $182,084$ & $\mathbf{63,000}$ \\
			\midrule
			\multirow{9}{*}{\rotatebox{90}{\halrect\texttt{-GL}}} & $\#$ of failed problems & $96$ & $5$ & $7$ & $5$ & $\mathbf{2}$ \\
			& Median results & $96$ & $\mathbf{1,404}$ & $2,564$ & $2,185$ & $1,520$ \\
			& Average results & $96$ & $64,275$ & $107,127$ & $65,271$ & $\mathbf{41,061}$ \\
			& Average ($n \leq 4$) & $51$ & $25,847$ & $10,831$ & $4,360$ & $\mathbf{3,301}$ \\
			& Average ($n > 4$) & $45$ & $108,401$ & $216,503$ & $134,399$ & $\mathbf{83,929}$ \\
			& Average (convex) & $30$ & $40,343$ & $79,374$ & $13,521$ & $\mathbf{7,055}$ \\
			& Average (non-convex) & $66$ & $75,153$ & $119,742$ & $88,793$ & $\mathbf{56,519}$ \\
			& Average (uni-modal) & $15$ & $24,714$ & $116,545$ & $65,538$ & $\mathbf{18,646}$ \\
			& Average (multi-modal) & $81$ & $73,405$ & $104,953$ & $65,209$ & $\mathbf{46,234}$ \\
			\bottomrule
	\end{tabular}}
	\label{tab:results}
\end{table}

Additionally, the operational characteristics~\cite{Grishagin1978,Strongin2000:book} using all $96$ test problems from \directlib{} are reported in \Cref{fig:perf-l1}.
Operational characteristics provide the proportion of test problems that can be solved within a given budget of function evaluations.
\Cref{fig:perf-l1} reveals that all \halrect{} algorithms based on three different selection schemes and four different strategies for $\mathcal{F}^i_k$ (\cref{eq:eq1,eq:eq2,eq:eq3,eq:eq4}) perform similarly when the budget given for the evaluations of objective functions is relatively small ($m \leq 1,000$).
Within this budget, all versions of \halrect{} could solve approximately half of the test problems.
However, as the number of function evaluations increases (as more complex problems are considered), the dominance of \cref{eq:eq4} based versions (especially \halrect\texttt{-GL}) begins to emerge.
At the same time, the worst results come from versions based on \cref{eq:eq3}.

\begin{figure}[ht]
	\resizebox{\textwidth}{!}{
		\begin{tikzpicture}
			\begin{axis}[
				legend pos=north west,
				title  = {Operational characteristics},
				xlabel = {Function evaluations},
				xmode=log,
				ymin=-0.02,ymax=1.02,
				ytick distance=0.1,
				xmode=log,
				xmin=50,
				xmax=1000000,
				xtick distance=10,
				ylabel = {Proportion of solved problems},
				legend style={font=\tiny,xshift=-0.5em},
				legend cell align={left},
				legend columns=1,
				height=0.6\textwidth,width=\textwidth,
				every axis plot/.append style={very thick},
				]
				\addplot[mark=*,black,mark options={scale=1.5, fill=princetonorange}, only marks,line width=0.75pt] coordinates {(0.1,0.1)} ;
				\label{p1}
				\addplot[mark=square*,black,mark options={scale=1.5, fill=brown}, only marks,line width=0.75pt] coordinates {(0.1,0.1)} ;
				\label{p2}
				\addplot[mark=diamond*,black,mark options={scale=1.5, fill=sienna}, only marks,line width=0.75pt] coordinates {(0.1,0.1)} ;
				\label{p3}
				\addplot[mark=triangle*,black,mark options={scale=1.5, fill=psychedelicpurple}, only marks,line width=0.75pt] coordinates {(0.1,0.1)} ;
				\label{p4}
				\node [draw,fill=white] at (rel axis cs: 0.85,0.25) {\shortstack[l]{
						\ref{p1} {\scriptsize \cref{eq:eq1}} \\
						\ref{p2} {\scriptsize \cref{eq:eq2}} \\
						\ref{p3} {\scriptsize \cref{eq:eq3}} \\
						\ref{p4} {\scriptsize \cref{eq:eq4}}}};
				
				\addplot[postaction={decoration={markings,mark=between positions 0 and 1 step 0.1 with {\node[circle,draw=black,fill=princetonorange,inner sep=1.5pt,solid] {};}},decorate,},princetonorange,line width=0.75pt] table[x=T,y=DDA] {data/Overallass.txt};
				\addplot[postaction={decoration={markings,mark=between positions 0 and 1 step 0.1 with {\node[mark=square,draw=black,fill=brown,inner sep=1.5pt,solid] {};}},decorate,},brown,line width=0.75pt] table[x=T,y=DRA] {data/Overallass.txt};
				\addplot[postaction={decoration={markings,mark=between positions 0 and 1 step 0.1 with 	{\node[diamond,draw=black,fill=sienna,inner sep=1.5pt,solid] {};}},decorate,},sienna,line width=0.75pt] table[x=T,y=BIA] {data/Overallass.txt};
				\addplot[postaction={decoration={markings,mark=between positions 0 and 1 step 0.1 with {\node[regular 	polygon,regular polygon sides=3,draw=black,fill=psychedelicpurple,inner sep=1pt,solid] {};}},decorate,},psychedelicpurple,line width=0.75pt] table[x=T,y=ADA] {data/Overallass.txt};
				
				\addplot[postaction={decoration={markings,mark=between positions 0 and 1 step 0.09 with {\node[circle,draw=black,fill=princetonorange,inner sep=1.5pt,solid] {};}},decorate,},princetonorange,line width=0.75pt, loosely dashed] table[x=T,y=DDO] {data/Overallass.txt};
				\addplot[postaction={decoration={markings,mark=between positions 0 and 1 step 0.09 with {\node[mark=square,draw=black,fill=brown,inner sep=1.5pt,solid] {};}},decorate,},brown,line width=0.75pt, loosely dashed] table[x=T,y=DRO] {data/Overallass.txt};
				\addplot[postaction={decoration={markings,mark=between positions 0 and 1 step 0.09 with 	{\node[diamond,draw=black,fill=sienna,inner sep=1.5pt,solid] {};}},decorate,},sienna,line width=0.75pt, loosely dashed] table[x=T,y=BIO] {data/Overallass.txt};
				\addplot[postaction={decoration={markings,mark=between positions 0 and 1 step 0.09 with {\node[regular 	polygon,regular polygon sides=3,draw=black,fill=psychedelicpurple,inner sep=1pt,solid] {};}},decorate,},psychedelicpurple,line width=0.75pt, loosely dashed] table[x=T,y=ADO] {data/Overallass.txt};
				
				\addplot[postaction={decoration={markings,mark=between positions 0 and 1 step 0.11 with {\node[circle,draw=black,fill=princetonorange,inner sep=1.5pt,solid] {};}},decorate,},princetonorange,line width=0.75pt, loosely dotted] table[x=T,y=DDG] {data/Overallass.txt};
				\addplot[postaction={decoration={markings,mark=between positions 0 and 1 step 0.11 with {\node[mark=square,draw=black,fill=brown,inner sep=1.5pt,solid] {};}},decorate,},brown,line width=0.75pt, loosely dotted] table[x=T,y=DRG] {data/Overallass.txt};
				\addplot[postaction={decoration={markings,mark=between positions 0 and 1 step 0.11 with 	{\node[diamond,draw=black,fill=sienna,inner sep=1.5pt,solid] {};}},decorate,},sienna,line width=0.75pt, loosely dotted] table[x=T,y=BIG] {data/Overallass.txt};
				\addplot[postaction={decoration={markings,mark=between positions 0 and 1 step 0.11 with {\node[regular 	polygon,regular polygon sides=3,draw=black,fill=psychedelicpurple,inner sep=1pt,solid] {};}},decorate,},psychedelicpurple,line width=0.75pt, loosely dotted] table[x=T,y=ADG] {data/Overallass.txt};
				
				\addplot[line width=0.75pt, black] coordinates {(0.1,0.1)} ;
				\label{ps1}
				\addplot[line width=0.75pt, loosely dashed, black] coordinates {(0.1,0.1)} ;
				\label{ps2}
				\addplot[line width=0.75pt, loosely dotted, black] coordinates {(0.1,0.1)} ;
				\label{ps3}
				\node [draw,fill=white] at (rel axis cs: 0.15,0.8) {\shortstack[l]{
						{\scriptsize Algorithm} \\
						\ref{ps2} {\scriptsize \halrect} \\
						\ref{ps1} {\scriptsize \halrect\texttt{-IA}} \\
						\ref{ps3} {\scriptsize \halrect\texttt{-GL}}}};
			\end{axis}
	\end{tikzpicture}}
	\caption{Operational characteristics of \halrect, \halrect\texttt{-IA}, \halrect\texttt{-GL} algorithms based on \cref{eq:eq1,eq:eq2,eq:eq3,eq:eq4} (used in the selection scheme) on the whole set of \directlib{} test problems.}
	\label{fig:perf-l1}
\end{figure}

\subsection{Comparison of three \halrect{} variations vs. twelve recent \direct-type algorithms}\label{sec:comparison}

Based on the results presented in the previous section, the three most promising variations of \halrect{} algorithms (all based on \cref{eq:eq4}) are considered and  compared with twelve different \direct-type global optimization variations introduced in~\cite{Stripinis2021b}.
These twelve \direct-type algorithms have been created by newly combining three known selection schemes: i) \textbf{I}mproved \textbf{O}riginal (\textbf{IO}), ii) \textbf{I}mproved \textbf{A}ggressive (\textbf{IA}), and iii) two-step-based (\textbf{G}lobal-\textbf{L}ocal) Pareto (\textbf{GL}) (see \Cref{tab:selection-schemes}), and four partitioning techniques: i) Hyper-rectangular partitioning based on \textbf{N}-\textbf{D}imensional \textbf{T}risection and objective function evaluations at \textbf{C}enter points (\textbf{N-DTC}), ii)
Hyper-rectangular partitioning based on \textbf{1}-\textbf{D}imensional \textbf{T}risection and objective function evaluations at \textbf{C}enter points (\textbf{1-DTC}), iii) Hyper-rectangular partitioning based on \textbf{1}-\textbf{D}imensional \textbf{T}risection and objective function evaluations at two \textbf{D}iagonal \textbf{V}ertices (\textbf{1-DTDV}), and iv)  Hyper-rectangular partitioning based on \textbf{1}-\textbf{D}imensional \textbf{B}isection and objective function evaluations at two \textbf{D}iagonal \textbf{P}oints (\textbf{1-DBDP}) (see \Cref{tab:partitioning-strategies}).

\cref{tab:total} shows the summarized comparative results on the whole set of $96$ box-constrained test problems from \directlib.
In \Cref{tab:total}, each column corresponds to a \direct-type algorithm based on a different partitioning scheme.
Since each partition scheme was run on $96$ problems using $3$ different selection methods (rows of \Cref{tab:total}), each \direct-type algorithm based on a certain partition scheme was involved in solving $3 \times 96 = 288$ problems.
As previously, the best results are marked in bold.
We note that the original \halrect{} algorithm does not have the purpose of adapting the IO scheme designed to reduce the number of ``equivalent'' hyper-rectangles.
As described in \cref{ssec:reducing-POH}, the \halrect{} algorithm internally uses an innovative approach to deal with such cases.

Regardless of the chosen POH selection scheme (IO, IA, GL), the smallest number of unsolved problems was achieved using the \halrect{} partitioning scheme-based algorithms (\halrect, \halrect\texttt{-IA}, \halrect\texttt{-GL}).
Summing the results, \halrect{} partitioning scheme-based algorithms did not solve ($19/288$) of the test cases, while the second and third best partitioning schemes (1-DBDP and N-DTC) based algorithms did not solve ($28/288$) and ($29/282$) cases accordingly.
Naturally, a higher number of solved problems leads to better performance of the \halrect{} partitioning scheme-based algorithms.
Consequently, the three \halrect{} partitioning scheme-based algorithms required approximately $31 \%$ and $36 \%$ evaluations of fever functions in comparison to the other two algorithms based on the best partition schemes (1-DBDP and N-DTC) based algorithms.
The most notable difference in the \halrect{} partitioning scheme was observed when the IA selection scheme was used.
The \halrect\texttt{-IA} algorithm required approximately $57 \%$ and $61 \%$ compared to the two best algorithms (1-DBDP-IA and 1-DTC-IA).

\begin{sidewaystable}
	\caption{The number of function evaluations and the execution time (in seconds) of three \halrect{} versions based on \cref{eq:eq4} vs. twelve \direct-type algorithms (introduced in \cite{Stripinis2021b}) on \directlib{} test problems. The best results are marked in bold.}
	\resizebox{\textwidth}{!}{
		\begin{tabular}{lcrrrrrrrrrrr}
			\toprule
			\multirow{2}{*}{Criteria / Algorithms} & \multirow{2}{*}{$\#$ of cases} & \multicolumn{5}{c}{Function evaluations} && \multicolumn{5}{c}{Execution time (in seconds)} \\
			\cmidrule{3-7}\cmidrule{9-13}
			& & \halrect & N-DTC-IO & 1-DTC-IO & 1-DBDP-IO & 1-DTDV-IO && \halrect & N-DTC-IO & 1-DTC-IO & 1-DBDP-IO & 1-DTDV-IO \\
			\midrule
			$\#$ of failed problems & $96$ & $\mathbf{12}$ & $\mathbf{12}$ & $18$ & $\mathbf{12}$ & $21$ & & $-$ & $-$ & $-$ & $-$ & $-$ \\
			Average results & $96$ & $142,403$ & $\mathbf{142,277}$ & $211,463$ & $146,133$ & $227,455$ & & $ 652.97 $ & $ \mathbf{184.77} $ & $ 435.49 $ & $ 306.43 $ & $ 9,533.28 $ \\
			Average ($n \leq 4$) & $51$ & $48,456$ & $43,832$ & $42,633$ & $\mathbf{41,602}$ & $41,990$ & & $ 317.96 $ & $ 180.41 $ & $ 321.23 $ & $ \mathbf{169.54} $ & $ 1,730.39 $ \\
			Average ($n > 4$) & $45$ & $\mathbf{249,953}$ & $254,819$ & $403,749$ & $265,522$ & $438,574$ & & $ 1,039.72 $ & $ \mathbf{193.73} $ & $ 572.13 $ & $ 465.33 $ & $ 18,414.99 $ \\
			Average (convex) & $30$ & $83,835$ & $111,817$ & $170,675$ & $\mathbf{80,490}$ & $171,868$ & & $ 105.17 $ & $ 99.51 $ & $ 292.37 $ & $ \mathbf{89.43} $ & $ 7,204.96 $ \\
			Average (non-convex) & $66$ & $169,024$ & $\mathbf{156,122}$ & $230,004$ & $175,971$ & $252,722$ & & $ 901.97 $ & $ \mathbf{223.53} $ & $ 500.55 $ & $ 405.06 $ & $ 10,591.60 $ \\
			Average (uni-modal) & $15$ & $60,046$ & $60,100$ & $\mathbf{57,360}$ & $62,016$ & $111,547$ & & $ 55.23 $ & $ \mathbf{27.32} $ & $ 100.38 $ & $ 62.29 $ & $ 4, 800.28 $ \\
			Average (multi-modal) & $81$ & $161,408$ & $\mathbf{161,240}$ & $247,026$ & $165,545$ & $254,203$ & & $ 790.91 $ & $ \mathbf{221.11} $ & $ 512.83 $ & $ 362.77 $ & $ 10, 625.50 $ \\
			Median results & $96$ & $976$ & $\mathbf{771}$ & $1,198$ & $953$ & $847$ & & $ 0.68 $ & $ \mathbf{0.17} $ & $ 0.27 $ & $ 0.30 $ & $ 0.75 $ \\
			\midrule
			Criteria / Algorithms & $\#$ of cases & \halrect\texttt{-IA} & N-DTC-IA & 1-DTC-IA & 1-DBDP-IA & 1-DTDV-IA && \halrect\texttt{-IA} & N-DTC-IA & 1-DTC-IA & 1-DBDP-IA & 1-DTDV-IA \\
			\midrule
			$\#$ of failed problems & $96$ & $\mathbf{5}$ & $13$ & $13$ & $11$ & $18$ & & $-$ & $-$ & $-$ & $-$ & $-$ \\
			Average results & $96$ & $\mathbf{62,874}$ & $172,805$ & $160,691$ & $146,887$ & $202,694$ & & $ \mathbf{18.67} $ & $ 21.43 $ & $ 69.06 $ & $ 33.59 $ & $8,580.81$ \\
			Average ($n \leq 4$) & $51$ & $\mathbf{3,762}$ & $25,968$ & $23,638$ & $45,643$ & $9,785$ & & $ \mathbf{1.09} $ & $ 2.74 $ & $ 4.10 $ & $ 14.03 $ & $16.80$ \\
			Average ($n > 4$) & $45$ & $\mathbf{129,952}$ & $339,791$ & $316,541$ & $262,640$ & $421,539$ & & $ \mathbf{38.61} $ & $ 42.67 $ & $ 142.76 $ & $ 56.07 $ & $18,287.05$ \\
			Average (convex) & $30$ & $\mathbf{51,166}$ & $149,711$ & $126,030$ & $109,374$ & $153,594$ & & $ 19.06 $ & $ \mathbf{17.65} $ & $ 51.27 $ & $ 24.62 $ & $7,204.24$ \\
			Average (non-convex) & $66$ & $\mathbf{68,197}$ & $183,302$ & $176,446$ & $163,939$ & $225,012$ & & $ \mathbf{18.49} $ & $ 23.15 $ & $ 77.14 $ & $ 37.67 $ & $9,206.52$ \\
			Average (uni-modal) & $15$ & $\mathbf{62,332}$ & $108,068$ & $78,226$ & $73,957$ & $111,805$ & & $ 20.90 $ & $ \mathbf{9.61} $ & $ 30.50 $ & $ 15.28 $ & $4,800.43$ \\
			Average (multi-modal) & $81$ & $\mathbf{63,000}$ & $187,744$ & $179,722$ & $163,717$ & $223,668$ & & $ \mathbf{18.15} $ & $ 24.15 $ & $ 77.95 $ & $ 37.82 $ & $9,453.20$ \\
			Median results & $96$ & $1,581$ & $7,608$ & $\mathbf{1,287}$ & $2,108$ & $1,586$ & & $ 0.41 $ & $ 0.41 $ & $ \mathbf{0.21} $ & $ \mathbf{0.21} $ & $0.53$ \\
			\midrule
			Criteria / Algorithms & $\#$ of cases & \halrect\texttt{-GL} & N-DTC-GL & 1-DTC-GL & 1-DBDP-GL & 1-DTDV-GL && \halrect\texttt{-GL} & N-DTC-GL & 1-DTC-GL & 1-DBDP-GL & 1-DTDV-GL \\
			\midrule
			$\#$ of failed problems & $96$ & $\mathbf{2}$ & $4$ & $5$ & $5$ & $5$ & & $-$ & $-$ & $-$ & $-$ & $-$ \\
			Average results & $96$ & $\mathbf{41,061}$ & $71,488$ & $62,475$ & $65,442$ & $71,319$ & & $ 16.31 $ & $ \mathbf{9.10} $ & $ 38.12 $ & $ 21.25 $ & $ 3,907.79 $ \\
			Average ($n \leq 4$) & $51$ & $\mathbf{3,301}$ & $9,675$ & $7,073$ & $41,300$ & $5,772$ & & $ \mathbf{0.63} $ & $ 1.08 $ & $ 1.09 $ & $ 14.33 $ & $ 10.68 $ \\
			Average ($n > 4$) & $45$ & $\mathbf{83,929}$ & $141,753$ & $125,417$ & $93,714$ & $145,733$ & & $ 34.09 $ & $ \mathbf{18.21} $ & $ 80.11 $ & $ 29.41 $ & $ 8,324.75 $ \\
			Average (convex) & $30$ & $\mathbf{7,055}$ & $55,320$ & $45,520$ & $42,326$ & $8,950$ & & $ \mathbf{1.48} $ & $ 6.94 $ & $ 22.66 $ & $ 13.39 $ & $ 20.79 $ \\
			Average (non-convex) & $66$ & $\mathbf{56,519}$ & $78,837$ & $70,182$ & $75,949$ & $99,669$ & & $ 23.05 $ & $ \mathbf{10.08} $ & $ 45.14 $ & $ 24.82 $ & $ 5,674.61 $ \\
			Average (uni-modal) & $15$ & $18,646$ & $28,478$ & $\mathbf{12,624}$ & $23,300$ & $25,796$ & & $ 4.78 $ & $ \mathbf{2.09} $ & $ 2.28 $ & $ 2.94 $ & $ 4,310.88 $ \\
			Average (multi-modal) & $81$ & $\mathbf{46,234}$ & $81,183$ & $73,979$ & $75,398$ & $81,825$ & & $ 18.97 $ & $ \mathbf{10.71} $ & $ 46.39 $ & $ 25.48 $ & $ 3,814.77 $ \\
			Median results & $96$ & $1,520$ & $1,848$ & $960$ & $2,042$ & $\mathbf{775}$ & & $ 0.42 $ & $ 0.19 $ & $ \mathbf{0.17} $ & $ 0.23 $ & $ 0.39 $ \\
			\bottomrule
	\end{tabular}}
	\label{tab:total}
\end{sidewaystable}

In different subsets of test problems, again, on average, the \halrect{} partitioning scheme-based algorithms dominate the other schemes.
The dominance of the \halrect{} partitioning scheme can be seen especially on more complex, multi-modal, non-convex, and $n > 4$ test problems.
Solving multi-modal problems with \halrect{} partitioning scheme-based algorithms required approximately $33 \%$ and $37 \%$ evaluations of fever functions compared to the other two algorithms based on the best partition schemes (1-DBDP and N-DTC).
Among the different selection schemes, the highest level of dominance has been observed using the GL selection scheme.
\halrect\texttt{-GL} required approximately $62 \%$ and $65\%$ fever function evaluations compared to the other two best algorithms (1-DBDP-GL and N-DTC-GL).
On a subset of non-convex test cases, \halrect{} partitioning-based algorithms required approximately $38 \%$ and $55 \%$ fever function evaluations than the other two best algorithms (1-DBDP and N-DTC).
Once again, the \halrect\texttt{-GL} version has shown even more outstanding performance and outperformed the second-best algorithm 1-DBDP-GL by approximately $59 \%$ of fever function evaluations.

Apart from the convex test problems, the advantage of \halrect{} partitioning scheme-based algorithms is lesser on more straightforward test problems.
For the $n \leq 4$ optimization test instances, \halrect{} partitioning-based algorithms required approximately $4 \%$ and $24 \%$ fever function evaluations than the other two best 1-DTDV and 1-DTC partitioning-technique-based algorithms.
However, looking at individual algorithms, the most efficient \halrect\texttt{-GL} algorithm outperformed the second-best 1-DTDV-GL by requiring approximately $43 \%$ fewer objective function evaluations.
Similar trends persist for uni-modal test problems.

The median value is the only criterion for which \halrect{} partitioning-based algorithms were not dominant.
Based on the median values, 1-DTDV and 1-DTC algorithms appear to be the most effective and can solve at least half of the problems with the best performance.

Based on the number of function evaluation criteria among the selection schemes, the best overall performance was achieved using two-step-based Pareto selection (GL).
All partitioning strategies combined with the latter POH selection scheme solved the largest number of test problems and showed the best performance, especially on more complex ones.
The best combination, out of 15 tested, proved to be the \halrect\texttt{-GL} algorithm, the second-best 1-DTC-GL, and the third-best \halrect\texttt{-IA}.

Based on execution times, the fastest performing partitioning scheme is N-DTC.
On average, the N-DTC required approximately 41\% of fever execution times than the second fastest partition strategies (1-DBDP).
It is not surprising since the N-DTC partitioning scheme subdivides the hyper-rectangle through all the largest side lengths, resulting in more function evaluations but fewer expensive computations, like POH selection.
Overall, the proposed \halrect{} partitioning scheme ranks only fourth in speed.
Additional calculations hampered the performance of the algorithm.
However, due to their exceptional performance and a small number of failures, the \halrect\texttt{-GL} and \halrect\texttt{-IA} algorithms rank second and third in terms of running speed, behind only the N-DTC-GL algorithm.
Finally, the situation in favor of the \halrect{} algorithm will be even more promising when the values of the objective functions are more expensive.
In the case studied, the test functions are cheap.

Finally, operational characteristics in \Cref{fig:Nonlinear} show the behavior of all fifteen algorithms on all box-constrained test problems from \directlib.
When a given budget of function evaluations is low ($M_{\rm max} \leq 1,000$), all algorithms perform similarly regardless of the partitioning scheme.
All algorithms solved approximately $60 \%$ of the test problems within this relatively small budget.
However, when the maximum budget for function evaluations increased ($M_{\rm max} > 1,000$), the algorithms based on the \halrect{} partitioning strategy combined with IA and GL selection schemes showed the best performance.

\begin{figure}[ht]
	\resizebox{\textwidth}{!}{
		\begin{tikzpicture}
			\begin{groupplot}[
				group style={
					group size=3 by 1,
					x descriptions at=edge bottom,
					y descriptions at=edge left,
					horizontal sep=0pt,
				},
				height=0.65\textwidth,width=0.5\textwidth,
				xmode=log
				]
				\nextgroupplot[
				xmode=log,
				title  = {Selection scheme -- IO},
				ymin=0,ymax=1.025,
				ytick distance=0.1,
				xmin=10,xmax=1000000,
				xtick distance=10,
				xlabel = {Function evaluations},
				ylabel = {Proportion of problems solved},
				]
				\addplot[postaction={decoration={markings,mark=between positions 0 and 1 step 0.095 with {\node[circle,draw=sandstorm,fill=sandstorm,inner sep=1.5pt] {};}},decorate,},sandstorm,line width=0.75pt] table[x=T,y=DDO] {data/overall.txt};
				\addplot[postaction={decoration={markings,mark=between positions 0 and 1 step 0.1 with {\node[mark=square,draw=blue,fill=blue,inner sep=1.5pt] {};}},decorate,},blue,line width=0.75pt] table[x=T,y=DRO] {data/overall.txt};
				\addplot[postaction={decoration={markings,mark=between positions 0 and 1 step 0.105 with 	{\node[diamond,draw=onyx!50,fill=onyx!50,inner sep=1.5pt] {};}},decorate,},onyx!50,line width=0.75pt] table[x=T,y=BIO] {data/overall.txt};
				\addplot[postaction={decoration={markings,mark=between positions 0 and 1 step 0.11 with {\node[regular 	polygon,regular polygon sides=3,draw=red,fill=red,inner sep=1pt] {};}},decorate,},red,line width=0.75pt] table[x=T,y=ADO] {data/overall.txt};
				\addplot[postaction={decoration={markings,mark=between positions 0 and 1 step 0.09 with {\node[circle,draw=black,fill=black,inner sep=1.5pt,solid] {};}},decorate,},black,line width=0.75pt] table[x=T,y=HIO] {data/overall.txt};
				\node [draw,fill=white] at (rel axis cs: 0.7,0.21) {\shortstack[l]{
						{\scriptsize \textbf{Partitioning}} \\
						\ref{p10} {\tiny \halrect} \\
						\ref{p11} {\tiny N-DTC} \\
						\ref{p12} {\tiny 1-DTC} \\
						\ref{p13} {\tiny 1-DBDP} \\
						\ref{p14} {\tiny 1-DTDV}}};
				\nextgroupplot[
				xmode=log,
				title  = {Selection scheme -- IA},
				ymin=0,ymax=1.025,
				ytick distance=0.1,
				xmin=10,xmax=1000000,
				xtick distance=10,
				xlabel = {Function evaluations},
				]
				\addplot[mark=*,black,mark options={scale=1.5, fill=black},line width=0.75pt] coordinates {(0.1,0.1)} ;
				\label{p10}
				\addplot[mark=*,sandstorm,mark options={scale=1.5, fill=sandstorm},line width=0.75pt] coordinates {(0.1,0.1)} ;
				\label{p11}
				\addplot[mark=square*,blue,mark options={scale=1.5, fill=blue},line width=0.75pt] coordinates {(0.1,0.1)} ;
				\label{p12}
				\addplot[mark=diamond*,onyx!50,mark options={scale=1.5, fill=onyx!50},line width=0.75pt] coordinates {(0.1,0.1)} ;
				\label{p13}
				\addplot[mark=triangle*,red,mark options={scale=1.5, fill=red},line width=0.75pt] coordinates {(0.1,0.1)} ;
				\label{p14}
				\addplot[postaction={decoration={markings,mark=between positions 0 and 1 step 0.095 with {\node[circle,draw=sandstorm,fill=sandstorm,inner sep=1.5pt] {};}},decorate,},sandstorm,line width=0.75pt] table[x=T,y=DDA] {data/overall.txt};
				\addplot[postaction={decoration={markings,mark=between positions 0 and 1 step 0.1 with {\node[mark=square,draw=blue,fill=blue,inner sep=1.5pt] {};}},decorate,},blue,line width=0.75pt] table[x=T,y=DRA] {data/overall.txt};
				\addplot[postaction={decoration={markings,mark=between positions 0 and 1 step 0.105 with 	{\node[diamond,draw=onyx!50,fill=onyx!50,inner sep=1.5pt] {};}},decorate,},onyx!50,line width=0.75pt] table[x=T,y=BIA] {data/overall.txt};
				\addplot[postaction={decoration={markings,mark=between positions 0 and 1 step 0.11 with {\node[regular 	polygon,regular polygon sides=3,draw=red,fill=red,inner sep=1pt] {};}},decorate,},red,line width=0.75pt] table[x=T,y=ADA] {data/overall.txt};
				\addplot[postaction={decoration={markings,mark=between positions 0 and 1 step 0.09 with {\node[circle,draw=black,fill=black,inner sep=1.5pt,solid] {};}},decorate,},black,line width=0.75pt] table[x=T,y=HIA] {data/overall.txt};
				\node [draw,fill=white] at (rel axis cs: 0.7,0.21) {\shortstack[l]{
						{\scriptsize \textbf{Partitioning}} \\
						\ref{p10} {\tiny \halrect} \\
						\ref{p11} {\tiny N-DTC} \\
						\ref{p12} {\tiny 1-DTC} \\
						\ref{p13} {\tiny 1-DBDP} \\
						\ref{p14} {\tiny 1-DTDV}}};
				\nextgroupplot[
				legend pos=outer north east,
				xmode=log,
				legend style={font=\tiny},
				legend style={draw=none},
				legend style={row sep=0.5pt},
				title  = {Selection scheme -- GL},
				ymin=0,ymax=1.025,
				ytick distance=0.1,
				xmin=10,xmax=1000000,
				xtick distance=10,
				xlabel = {Function evaluations},
				]
				\addplot[postaction={decoration={markings,mark=between positions 0 and 1 step 0.095 with {\node[circle,draw=sandstorm,fill=sandstorm,inner sep=1.5pt] {};}},decorate,},sandstorm,line width=0.75pt] table[x=T,y=DDG] {data/overall.txt};
				\addplot[postaction={decoration={markings,mark=between positions 0 and 1 step 0.1 with {\node[mark=square,draw=blue,fill=blue,inner sep=1.5pt] {};}},decorate,},blue,line width=0.75pt] table[x=T,y=DRG] {data/overall.txt};
				\addplot[postaction={decoration={markings,mark=between positions 0 and 1 step 0.105 with 	{\node[diamond,draw=onyx!50,fill=onyx!50,inner sep=1.5pt] {};}},decorate,},onyx!50,line width=0.75pt] table[x=T,y=BIG] {data/overall.txt};
				\addplot[postaction={decoration={markings,mark=between positions 0 and 1 step 0.11 with {\node[regular 	polygon,regular polygon sides=3,draw=red,fill=red,inner sep=1pt] {};}},decorate,},red,line width=0.75pt] table[x=T,y=ADG] {data/overall.txt};
				\addplot[postaction={decoration={markings,mark=between positions 0 and 1 step 0.09 with {\node[circle,draw=black,fill=black,inner sep=1.5pt,solid] {};}},decorate,},black,line width=0.75pt] table[x=T,y=HGL] {data/overall.txt};
				\node [draw,fill=white] at (rel axis cs: 0.7,0.21) {\shortstack[l]{
						{\scriptsize \textbf{Partitioning}} \\
						\ref{p10} {\tiny \halrect} \\
						\ref{p11} {\tiny N-DTC} \\
						\ref{p12} {\tiny 1-DTC} \\
						\ref{p13} {\tiny 1-DBDP} \\
						\ref{p14} {\tiny 1-DTDV}}};
			\end{groupplot}
	\end{tikzpicture}}
	\caption{Operational characteristics of three new \halrect{} variations (based on \halrect{} partitioning scheme) vs. twelve \direct-type algorithms (introduced in \cite{Stripinis2021b}) on the whole set box-constrained test problems from \directlib.}
	\label{fig:Nonlinear}
\end{figure}
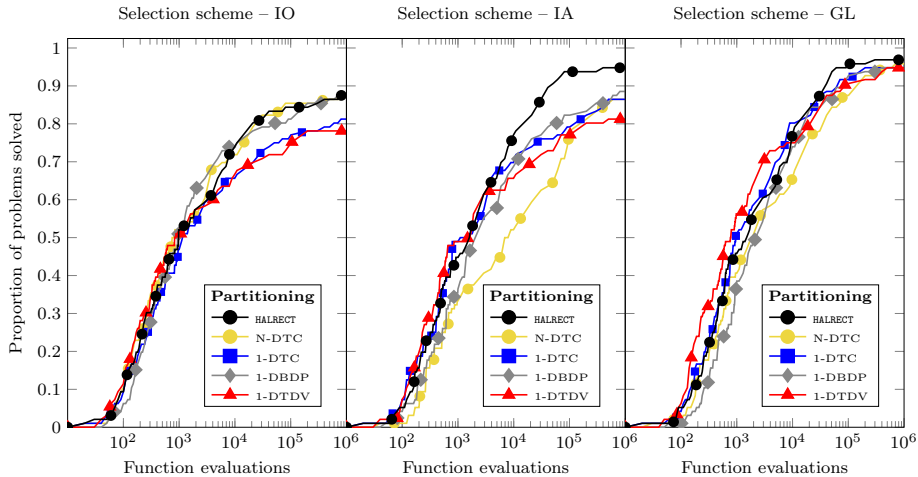

\subsection{Investigating the impact of the domain perturbation}\label{sec:perturb}

In investigating different partitioning techniques, one method may be lucky, because the partitioning approach in the initial steps naturally samples near the solution.
In such situations, the location of the solution may favor one partitioning strategy over another.
This section investigates the robustness of the partitioning approaches, especially the newly introduced \halrect{}, to slight perturbations of the domain.
This work extends our similar experiments described in \cite{Stripinis2021b}, where we identified test problems for which a particular partitioning scheme (regardless of the selection strategy) had a clear dominance, possibly due to the conveniently defined variable bounds.
Using the following rule, we perturbed the initial domain ($D = [\mathbf{a}, \mathbf{b}]$) for all box-constrained test problems from \directlib{}:
\begin{equation}\label{eq:pertrub}
	D^{\rm pert}_j = [\min{(a_j + \rho d_j, x_j^{\rm min})},  b_j + \rho d_j]_j, j = 1,...n,
\end{equation}
where $d_j = \mid b_j - a_j \mid $, and $\rho$ is a percentage of the shift.
The perturbed domain $D^{\rm pert}$ is obtained by shifting the original $D$ (given in \cref{tab:test}) by a $\rho$ percentage.
Since there is a risk that the solution may change when the domain is shifted, the calculation on the left-hand side of the bound checks that the shifted $(a_j + \rho d_j)$ coordinate is not greater than $x_j^{\rm min}$.
We used two different values ($\rho = 2.5 \%$ and $ \rho = 5 \%$) for the domain perturbation in the experimental study.

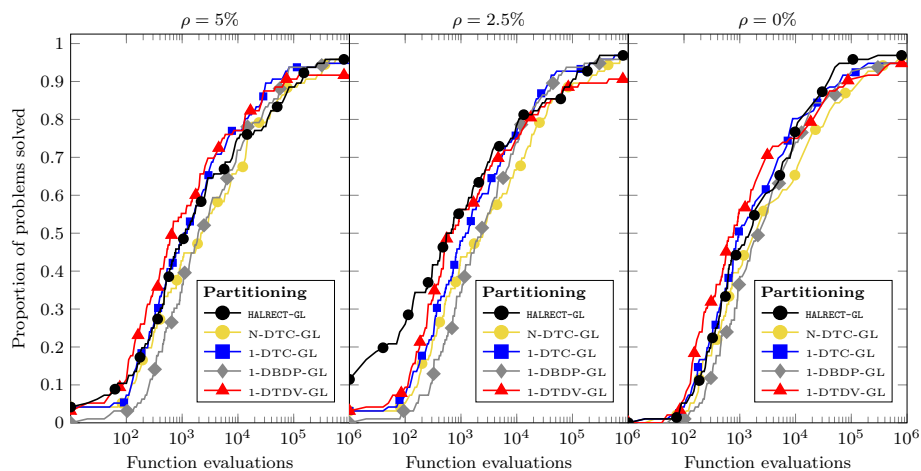
\begin{figure}[ht]
	\resizebox{\textwidth}{!}{
		\begin{tikzpicture}
			\begin{groupplot}[
				group style={
					group size=3 by 1,
					x descriptions at=edge bottom,
					y descriptions at=edge left,
					vertical sep=20pt,
					horizontal sep=0pt,
				},
				height=0.65\textwidth,width=0.5\textwidth,
				xmode=log
				]
				\nextgroupplot[
				legend pos=outer north east,
				xmode=log,
				legend style={font=\tiny},
				legend style={draw=none},
				legend style={row sep=0.5pt},
				title  = {$\rho = 5 \%$},
				ymin=0,ymax=1.025,
				ytick distance=0.1,
				xmin=10,xmax=1000000,
				xtick distance=10,
				title style={yshift=-0.75em},
				ylabel = {Proportion of problems solved},
				xlabel = {Function evaluations},
				]
				\addplot[postaction={decoration={markings,mark=between positions 0 and 1 step 0.095 with {\node[circle,draw=sandstorm,fill=sandstorm,inner sep=1.5pt] {};}},decorate,},sandstorm,line width=0.75pt] table[x=T,y=DVV] {data/Overallass1.txt};
				\addplot[postaction={decoration={markings,mark=between positions 0 and 1 step 0.1 with {\node[mark=square,draw=blue,fill=blue,inner sep=1.5pt] {};}},decorate,},blue,line width=0.75pt] table[x=T,y=GVV] {data/Overallass1.txt};
				\addplot[postaction={decoration={markings,mark=between positions 0 and 1 step 0.105 with 	{\node[diamond,draw=onyx!50,fill=onyx!50,inner sep=1.5pt] {};}},decorate,},onyx!50,line width=0.75pt] table[x=T,y=BVV] {data/Overallass1.txt};
				\addplot[postaction={decoration={markings,mark=between positions 0 and 1 step 0.11 with {\node[regular 	polygon,regular polygon sides=3,draw=red,fill=red,inner sep=1pt] {};}},decorate,},red,line width=0.75pt] table[x=T,y=AVV] {data/Overallass1.txt};
				\addplot[postaction={decoration={markings,mark=between positions 0 and 1 step 0.09 with {\node[circle,draw=black,fill=black,inner sep=1.5pt,solid] {};}},decorate,},black,line width=0.75pt] table[x=T,y=HVV] {data/Overallass1.txt};
				\node [draw,fill=white] at (rel axis cs: 0.7,0.21) {\shortstack[l]{
						{\scriptsize \textbf{Partitioning}} \\
						\ref{p10} {\tiny \halrect\texttt{-GL}} \\
						\ref{p11} {\tiny N-DTC-GL} \\
						\ref{p12} {\tiny 1-DTC-GL} \\
						\ref{p13} {\tiny 1-DBDP-GL} \\
						\ref{p14} {\tiny 1-DTDV-GL}}};
				\nextgroupplot[
				legend pos=outer north east,
				xmode=log,
				legend style={font=\tiny},
				legend style={draw=none},
				legend style={row sep=0.5pt},
				title  = {$\rho = 2.5 \%$},
				ymin=0,ymax=1.025,
				ytick distance=0.1,
				xmin=10,xmax=1000000,
				title style={yshift=-0.75em},
				xtick distance=10,
				xlabel = {Function evaluations},
				]
				\addplot[postaction={decoration={markings,mark=between positions 0 and 1 step 0.095 with {\node[circle,draw=sandstorm,fill=sandstorm,inner sep=1.5pt] {};}},decorate,},sandstorm,line width=0.75pt] table[x=T,y=DII] {data/Overallass1.txt};
				\addplot[postaction={decoration={markings,mark=between positions 0 and 1 step 0.1 with {\node[mark=square,draw=blue,fill=blue,inner sep=1.5pt] {};}},decorate,},blue,line width=0.75pt] table[x=T,y=GII] {data/Overallass1.txt};
				\addplot[postaction={decoration={markings,mark=between positions 0 and 1 step 0.105 with 	{\node[diamond,draw=onyx!50,fill=onyx!50,inner sep=1.5pt] {};}},decorate,},onyx!50,line width=0.75pt] table[x=T,y=BII] {data/Overallass1.txt};
				\addplot[postaction={decoration={markings,mark=between positions 0 and 1 step 0.11 with {\node[regular 	polygon,regular polygon sides=3,draw=red,fill=red,inner sep=1pt] {};}},decorate,},red,line width=0.75pt] table[x=T,y=AII] {data/Overallass1.txt};
				\addplot[postaction={decoration={markings,mark=between positions 0 and 1 step 0.09 with {\node[circle,draw=black,fill=black,inner sep=1.5pt,solid] {};}},decorate,},black,line width=0.75pt] table[x=T,y=HII] {data/Overallass1.txt};
				\node [draw,fill=white] at (rel axis cs: 0.7,0.21) {\shortstack[l]{
						{\scriptsize \textbf{Partitioning}} \\
						\ref{p10} {\tiny \halrect\texttt{-GL}} \\
						\ref{p11} {\tiny N-DTC-GL} \\
						\ref{p12} {\tiny 1-DTC-GL} \\
						\ref{p13} {\tiny 1-DBDP-GL} \\
						\ref{p14} {\tiny 1-DTDV-GL}}};
				\nextgroupplot[
				legend pos=outer north east,
				xmode=log,
				legend style={font=\tiny},
				legend style={draw=none},
				legend style={row sep=0.5pt},
				title  = {$\rho = 0 \%$},
				ymin=0,ymax=1.025,
				ytick distance=0.1,
				xmin=10,xmax=1000000,
				title style={yshift=-0.75em},
				xtick distance=10,
				xlabel = {Function evaluations},
				]
				\addplot[postaction={decoration={markings,mark=between positions 0 and 1 step 0.095 with {\node[circle,draw=sandstorm,fill=sandstorm,inner sep=1.5pt] {};}},decorate,},sandstorm,line width=0.75pt] table[x=T,y=DDG] {data/overall.txt};
				\addplot[postaction={decoration={markings,mark=between positions 0 and 1 step 0.1 with {\node[mark=square,draw=blue,fill=blue,inner sep=1.5pt] {};}},decorate,},blue,line width=0.75pt] table[x=T,y=DRG] {data/overall.txt};
				\addplot[postaction={decoration={markings,mark=between positions 0 and 1 step 0.105 with 	{\node[diamond,draw=onyx!50,fill=onyx!50,inner sep=1.5pt] {};}},decorate,},onyx!50,line width=0.75pt] table[x=T,y=BIG] {data/overall.txt};
				\addplot[postaction={decoration={markings,mark=between positions 0 and 1 step 0.11 with {\node[regular 	polygon,regular polygon sides=3,draw=red,fill=red,inner sep=1pt] {};}},decorate,},red,line width=0.75pt] table[x=T,y=ADG] {data/overall.txt};
				\addplot[postaction={decoration={markings,mark=between positions 0 and 1 step 0.09 with {\node[circle,draw=black,fill=black,inner sep=1.5pt,solid] {};}},decorate,},black,line width=0.75pt] table[x=T,y=HGL] {data/overall.txt};
				\node [draw,fill=white] at (rel axis cs: 0.7,0.21) {\shortstack[l]{
						{\scriptsize \textbf{Partitioning}} \\
						\ref{p10} {\tiny \halrect\texttt{-GL}} \\
						\ref{p11} {\tiny N-DTC-GL} \\
						\ref{p12} {\tiny 1-DTC-GL} \\
						\ref{p13} {\tiny 1-DBDP-GL} \\
						\ref{p14} {\tiny 1-DTDV-GL}}};
			\end{groupplot}
	\end{tikzpicture}}
	\caption{Operational characteristics of five \direct-type algorithms based on different partitioning schemes combined with GL selection on the whole set of box-constrained perturbed problems from \directlib.}
	\label{fig:Nonlinears}
\end{figure}

The experimental results obtained of five \direct-type algorithms based on different partitioning schemes combined with GL selection are illustrated in \Cref{fig:Nonlinears}.
The efficiency of the \halrect\texttt{-GL} algorithm, when a given budget of function evaluations is low ($M_{\rm max} \leq 200$), has increased.
However, when ($ 30,000 < M_{\rm max} < 200,000$), the performance of the \halrect\texttt{-GL} algorithm slightly worsened compared to the initial results.
However, the algorithm's efficiency remains the same with a large objective function evaluation budget ($M_{\rm max} > 200,000$).
Algorithms based on other partitioning schemes behave similarly for different values of $\rho$. 
The percentage of solved test problems remains similar for almost all algorithms.
A more noticeable difference is using a 1-DTDV-GL algorithm.
When $\rho = 2.5 \%$, the percentage of solved problems is reduced by $\sim 5\%$, and when $\rho = 5 \%$, it is reduced by $\sim 4 \%$.



\section{Conclusion}\label{sec:conclusions}
This paper introduces a new \direct-type algorithm (\halrect) for box-constrained global optimization problems.
A new deterministic approach combines halving (bisection) with a new multi-point sampling scheme in contrast to trisection and midpoint sampling used in most existing \direct-type algorithms.
Three selection schemes and four strategies are introduced to calculate the aggregated information of the objective function used in the selection of the candidate.
In this way, twelve variations of the \halrect{} algorithm are introduced and experimentally compared.
Three of the most promising versions were selected and compared versus twelve recent \direct-type algorithms.
The extensive experimental results revealed that the new algorithms based on \halrect{} partitioning schemes give results comparable and often superior to these 12 \direct-type algorithms.
Further investigation has shown that small perturbations in the domain $D$ of the test problems can help the \halrect{} algorithm to represent better and select POHs, which can significantly improve performance efficiency.

\section*{Code availability}
All implemented versions of the \halrect{} algorithm are available at the GitHub repository: \url{https://github.com/blockchain-group/DIRECTGO} and can be used under the MIT license. We welcome contributions and corrections to this work.

\section*{Data statement}
\texttt{DIRECTGOLib} - \direct{} \textbf{G}lobal \textbf{O}ptimization test problems \textbf{Lib}rary is designed as a continuously-growing open-source GitHub repository to which anyone can easily contribute.
The exact data underlying this article from \directlib{} can be accessed either on GitHub or at Zenodo:
\begin{itemize}
	\item GitHub: \url{https://github.com/blockchain-group/DIRECTGOLib/tree/v1.1},
	\item Zenodo: \url{https://doi.org/10.5281/zenodo.6491951},
\end{itemize}
and used under the MIT license.
We welcome contributions and corrections to this work.

\appendix
\section{\directlib{} library}
\label{apendixas}

A summary of all used box-constrained optimization problems from \directlib\cite{DIRECTGOLibv11,DIRECTGOLibv1.1} and their properties are given in \Cref{tab:test}.
\cite{Stripinis2021b}
Test problems with the ${\alpha}$ symbol indicate that the non-default domain $D$ was used for the test problem.
The modified domain $D$ was taken from the \cite{Stripinis2021b} study for all the ${\alpha}$ symbol-marked test problems.
Here, the main features are reported: problem number (\#), name of the problem, source, dimensionality ($ n $), optimization domain ($ D $), problem type, and the known minimum ($ f^* $).
Some of these test problems have several variants, e.g., \textit{Bohachevsky}, \textit{Shekel}, and some of them, like \textit{Alpine}, \textit{Csendes}, \textit{Griewank}, etc., can be tested for varying dimensionality.

\begin{table}
	\caption{Key characteristics of the \directlib\cite{DIRECTGOLibv11,DIRECTGOLibv1.1} test problems for box-constrained global optimization}
	\resizebox{\textwidth}{!}{
		\begin{tabular}[tb]{@{\extracolsep{\fill}}rlrrrrrr}
			\toprule
			\# & Name & Source & $n$  &  $ D $ & Type & No. of minima & $ f^* $ \\
			\midrule
			$1,2,3$  & \textit{Ackley}$^{\alpha}$ 		& \cite{Hedar2005,Derek2013} 		& $2,5,10$ 	& $[-18, 47]^n$ 								& non-convex & multi-modal & $0.0000$			\\
			$4,5,6$  & \textit{Alpine}$^{\alpha}$ 		& \cite{Gavana2021} 		   		& $2,5,10$ 	& $[\sqrt[i]{2}, 8 + \sqrt[i]{2}]^n$ 			& non-convex & multi-modal & $-2.8081^n$ 		\\
			$7$ 	   & \textit{Beale}      	 			& \cite{Hedar2005,Derek2013} 		& $2$  		& $[-4.5, 4.5]^n$   	   						& non-convex & multi-modal & $0.0000$   		\\
			$8$ 	   & \textit{Bohachevsky$1^{\alpha}$}  	& \cite{Hedar2005,Derek2013} 		& $2$  		& $[-55, 145]^n$   	   							& convex 	 & uni-modal   & $0.0000$   		\\
			$9$ 	   & \textit{Bohachevsky$2^{\alpha}$} 	& \cite{Hedar2005,Derek2013} 		& $2$  		& $[-55, 145]^n$   	   							& non-convex & multi-modal & $0.0000$   		\\
			$10$ 	   & \textit{Bohachevsky$3^{\alpha}$}  	& \cite{Hedar2005,Derek2013} 		& $2$  		& $[-55, 145]^n$   	   							& non-convex & multi-modal & $0.0000$   		\\
			$11$ 	   & \textit{Booth}          			& \cite{Hedar2005,Derek2013} 		& $2$  		& $[-10, 10]^n$  	 	 						& convex 	 & uni-modal   & $0.0000$			\\
			$12$ 	   & \textit{Branin}          			& \cite{Hedar2005,Dixon1978} 		& $2$  		& $[-5, 10] \times [10,15]$ 					& non-convex & multi-modal & $0.3978$			\\
			$13$ 	   & \textit{Bukin6}           			& \cite{Derek2013} 		  	 		& $2$  		& $[-15, 5] \times [-3,3]$  					& convex 	 & multi-modal & $0.0000$   		\\
			$14$ 	   & \textit{Colville}        			& \cite{Hedar2005,Derek2013} 		& $4$  		& $[-10, 10]^n$           						& non-convex & multi-modal & $0.0000$   		\\
			$15$ 	   & \textit{Cross\_in\_Tray}   		& \cite{Derek2013} 		  	 		& $2$    	& $[0, 10]^n$           						& non-convex & multi-modal & $-2.0626$   		\\
			$16$ 	   & \textit{Crosslegtable}   			& \cite{Gavana2021} 		  	 	& $2$    	& $[-10, 15]^n$           						& non-convex & multi-modal & $-1.000$   		\\
			$17,18,19$ & \textit{Csendes}$^{\alpha}$   	& \cite{Gavana2021} 		  	 	& $2,5,10$  & $[-10, 25]^n$          	& convex 	 & multi-modal & $0.0000$   		\\
			$20$ 	   & \textit{Damavandi}   				& \cite{Gavana2021} 		  	 	& $2$    	& $[0, 14]^n$           						& non-convex & multi-modal & $0.0000$   		\\
			$21,22,23$ & \textit{Deb$01$}$^{\alpha}$    	& \cite{Gavana2021} 				& $2,5,10$  & $[-0.55, 1.45]^n$         					& non-convex & multi-modal & $-1.0000$   		\\
			$24,25,26$ & \textit{Deb$02$}$^{\alpha}$   	& \cite{Gavana2021} 				& $2,5,10$  & $[0.225, 1.225]^n$ 							& non-convex & multi-modal & $-1.0000$   		\\
			$27,28,29$ & \textit{Dixon\_and\_Price}   	& \cite{Hedar2005,Derek2013} 		& $2,5,10$  & $[-10, 10]^n$         						& convex 	 & multi-modal & $0.0000$   		\\
			$30$ 	   & \textit{Drop\_wave}$^{\alpha}$   	& \cite{Derek2013} 		  	 		& $2$    	& $[-4, 6]^n$      								& non-convex & multi-modal & $-1.0000$   		\\
			$31$ 	   & \textit{Easom}$^{\alpha}$     		& \cite{Hedar2005,Derek2013} 		& $2$    	& $\left[\dfrac{-100}{i+1}, 100i \right]^n$ 	& non-convex & multi-modal & $-1.0000$			\\
			$32$ 	   & \textit{Eggholder}       			& \cite{Derek2013} 		  	 		& $2$   	& $[-512, 512]^n$         						& non-convex & multi-modal & $-959.6406$	 	\\
			$33$ 	   & \textit{Goldstein\_and\_Price}$^{\alpha}$ & \cite{Hedar2005,Dixon1978} & $2$   	& $[-1.1, 2.9]^n$          						& non-convex & multi-modal & $3.0000$ 			\\
			$34,35,36$ & \textit{Griewank}$^{\alpha}$     & \cite{Hedar2005,Derek2013} & $2,5,10$ & $\left[-\sqrt{600i}, \dfrac{600}{\sqrt{i}} \right]^n$ & non-convex & multi-modal & $0.0000$   		\\
			$37$ 	   & \textit{Hartman$3$}       			& \cite{Hedar2005,Derek2013} 		& $3$    	& $[0, 1]^n$              						& non-convex & multi-modal & $-3.8627$			\\
			$38$ 	   & \textit{Hartman$6$}        		& \cite{Hedar2005,Derek2013} 		& $6$    	& $[0, 1]^n$            						& non-convex & multi-modal & $-3.3223$			\\
			$39$ 	   & \textit{Holder\_Table}      		& \cite{Derek2013} 			 		& $2$    	& $[-10, 10]^n$           						& non-convex & multi-modal & $-19.2085$			\\
			$40$ 	   & \textit{Hump}              		& \cite{Hedar2005,Derek2013} 		& $2$    	& $[-5, 5]^n$           						& non-convex & multi-modal & $-1.0316$			\\
			$41$ 	   & \textit{Langermann}        		& \cite{Derek2013} 			 		& $2$    	& $[0, 10]^n$           						& non-convex & multi-modal & $-4.1558$			\\
			$42,43,44$ & \textit{Levy}              		& \cite{Hedar2005,Derek2013} 		& $2,5,10$  & $[-10, 10]^n$             						& non-convex & multi-modal & $0.0000$   		\\
			$45$ 	   & \textit{Matyas}$^{\alpha}$         & \cite{Hedar2005,Derek2013} 		& $2$    	& $[-5.5, 14.5]^n$  	 						& convex 	 & uni-modal   & $0.0000$ 			\\
			$46$ 	   & \textit{McCormick}         		& \cite{Derek2013} 	 		 		& $2$    	& $[-1.5, 4] \times [-3,4]$ 					& convex 	 & multi-modal & $-1.9132$ 			\\
			$47$ 	   & \textit{Michalewicz}       		& \cite{Hedar2005,Derek2013} 		& $2$    	& $[0, \pi]^n$            						& non-convex & multi-modal & $-1.8013$	 		\\
			$48$ 	   & \textit{Michalewicz}       		& \cite{Hedar2005,Derek2013} 		& $5$    	& $[0, \pi]^n$          						& non-convex & multi-modal & $-4.6876$			\\
			$49$     & \textit{Michalewicz}       		& \cite{Hedar2005,Derek2013} 		& $10$   	& $[0, \pi]^n$         	 						& non-convex & multi-modal & $-9.6601$			\\
			$50$ 	   & \textit{Perm$4$} 						& \cite{Hedar2005,Derek2013} 		& $4$    	& $[-i, i]^n$  									& non-convex & multi-modal & $0.0000$   		\\
			$51,52,53$ & \textit{Pinter}$^{\alpha}$     	& \cite{Gavana2021} 		  	 	& $2,5,10$  & $[-5.5, 14.5]^n$      						& non-convex & multi-modal & $0.0000$   		\\
			$54$ 	   & \textit{Powell}       				& \cite{Hedar2005,Derek2013} 		& $4$    	& $[-4, 5]^n$             						& convex 	 & multi-modal & $0.0000$   		\\
			$55$ 	   & \textit{Power\_Sum}$^{\alpha}$     & \cite{Hedar2005,Derek2013} 		& $4$    	& $[1, 4 + \sqrt[i]{2}]^n$            			& convex 	 & multi-modal   & $0.0000$   		\\
			$56,57,58$ & \textit{Qing}     	    		& \cite{Gavana2021} 		  	 	& $2,5,10$  & $[-500, 500]^n$      	 						& non-convex & multi-modal & $0.0000$   		\\
			$59,60,61$ & \textit{Rastrigin}$^{\alpha}$    & \cite{Hedar2005,Derek2013} 		& $2,5,10$  & $[-5\sqrt[i]{2}, 7+\sqrt[i]{2}]^n$       		& non-convex & multi-modal & $0.0000$   		\\
			$62,63,64$ & \textit{Rosenbrock}$^{\alpha}$   & \cite{Hedar2005,Dixon1978}  		& $2,5,10$  & $\left[-\dfrac{5}{\sqrt{i}}, 10\sqrt{i} \right]^n$ & non-convex & uni-modal & $0.0000$   		\\
			$65,66,67$ & \textit{Rotated\_H\_Ellip}$^{\alpha}$ & \cite{Derek2013} 	 		& $2,5,10$  & $[-35, 96]^n$   								& convex 	 & uni-modal   & $0.0000$   		\\
			$68,69,70$ & \textit{Schwefel}$^{\alpha}$ & \cite{Hedar2005,Derek2013} & $2,5,10$ & $\left[-500 + \dfrac{100}{\sqrt{i}}, 500 - \dfrac{40}{\sqrt{i}} \right]^n$ & non-convex & multi-modal & $0.0000$ \\
			$71$ 	   & \textit{Shekel$5$}  				& \cite{Hedar2005,Derek2013} 		& $4$    	& $[0, 10]^n$             						& non-convex & multi-modal & $-10.1531$			\\
			$72$ 	   & \textit{Shekel$7$}   				& \cite{Hedar2005,Derek2013}	 	& $4$    	& $[0, 10]^n$             						& non-convex & multi-modal & $-10.4029$			\\
			$73$ 	   & \textit{Shekel$10$}  				& \cite{Hedar2005,Derek2013} 		& $4$    	& $[0, 10]^n$             						& non-convex & multi-modal & $-10.5364$	 		\\
			$74$ 	   & \textit{Shubert}           		& \cite{Hedar2005,Derek2013} 		& $2$    	& $[-10, 10]^n$           						& non-convex & multi-modal & $-186.7309$		\\
			$75,76,77$ & \textit{Sphere}$^{\alpha}$       & \cite{Hedar2005,Derek2013} 		& $2,5,10$  & $[-2.75, 7.25]^n$      						& convex 	 & uni-modal   & $0.0000$   		\\
			$78,79,80$ & \textit{Styblinski\_Tang}$^{\alpha}$ & \cite{Clerc1999} 			 	& $2,5,10$  & $[-5, 5 + \sqrt[i]{3}]^n$       	    					& non-convex & multi-modal & $-39.1661n$		\\
			$81,82,83$ & \textit{Sum\_of\_Powers}$^{\alpha}$ & \cite{Derek2013}		 		& $2,5,10$  & $[-0.55, 1.45]^n$      						& convex 	 & uni-modal   & $0.0000$   		\\
			$84,85,86$ & \textit{Sum\_Square}$^{\alpha}$  & \cite{Clerc1999} 			 		& $2,5,10$  & $[-5.5, 14.5]^n$        						& convex 	 & uni-modal   & $0.0000$			\\
			$87$ 	   & \textit{Trefethen}           		& \cite{Gavana2021} 				& $2$    	& $[-2, 2]^n$           						& non-convex & multi-modal & $-3.3068$			\\
			$88,89,90$ & \textit{Trid}              		& \cite{Hedar2005,Derek2013} 		& $2,5,10$  & $[-100, 100]^n$           					& convex 	 & multi-modal & $\vartheta$		\\
			$91,92,93$ & \textit{Vincent}           		& \cite{Clerc1999} 					& $2,5,10$  & $[0.25, 10]^n$          						& non-convex & multi-modal & $-n$   			\\
			$94,95,96$ & \textit{Zakharov}$^{\alpha}$     & \cite{Hedar2005,Derek2013} 		& $2,5,10$  & $[-1.625, 13.375]^n$      					& convex 	 & multi-modal & $0.0000$    		\\
			\midrule
			\multicolumn{8}{l}{$\vartheta$ -- $-\frac{1}{6}n^3 - \frac{1}{2}n^2 +  \frac{2}{3}n$}\\
			\multicolumn{8}{l}{${\alpha}$ -- domain $D$ was taken from \cite{Stripinis2021b}} \\
			\multicolumn{8}{l}{${i = 1, ..., n}$} \\
			
			\bottomrule
	\end{tabular}}
	\label{tab:test}
\end{table}

\bibliographystyle{spmpsci} 
\bibliography{bibliography} 
\end{document}